\newtheorem{theorem}{Theorem}[section]
\newtheorem{lemma}[theorem]{Lemma}
\newtheorem{question}[theorem]{Question}
\newtheorem*{introtheorem}{Theorem~\ref{thm:noseq}}
\newtheorem*{theorem*}{Theorem}
\newtheorem*{corollary*}{Corollary}
\newtheorem*{claim1}{Claim 1}
\newtheorem*{claim2}{Claim 2}
\newtheorem*{claim3}{Claim 3}
\newtheorem*{claim4}{Claim 4}
\newtheorem*{claim*}{Revised Claim}
\newtheorem*{sub-claim}{sub-claim}
\theoremstyle{definition}
\newtheorem{definition}[theorem]{Definition}
\theoremstyle{remark}
\newtheorem*{definition*}{Definition}
\newcommand{\Q}{\mathbb{Q}}
\newcommand{\R}{\mathbb{R}}
\newcommand{\N}{\mathbb{N}}
\newcommand{\A}{\mathcal{A}}
\newcommand{\B}{\mathcal{B}}
\newcommand{\C}{\mathcal{C}}
\newcommand{\F}{\mathcal{F}}
\newcommand{\U}{\mathcal{U}}
\newcommand{\explicitSet}[1]{\left\lbrace #1 \right\rbrace}
\newcommand{\brackets}[1]{\left\langle #1 \right\rangle}
\newcommand{\set}[2]{\explicitSet{#1 \colon #2}}
\newcommand{\seq}[2]{\brackets{#1 \colon #2}}
\newcommand{\<}{\langle}
\renewcommand{\>}{\rangle}
\renewcommand{\a}{\alpha}
\renewcommand{\b}{\beta}
\newcommand{\dlt}{\delta}
\newcommand{\e}{\varepsilon}
\newcommand{\z}{\zeta}
\newcommand{\kp}{\kappa}
\newcommand{\s}{\sigma}
\newcommand{\w}{\omega}
\newcommand{\0}{\emptyset}
\newcommand{\sub}{\subseteq}
\newcommand{\rest}{\!\restriction\!}
\newcommand{\cat}{\!\,^{{}_{{}^\frown\!}}}
\newcommand{\closure}[1]{\overline{#1}}
\newcommand{\boundary}{\partial}
\newcommand{\cf}{\mathrm{cf}}
\newcommand{\card}[1]{\left\lvert #1 \right\rvert}
\newcommand{\PP}{\mathbb{P}}
\newcommand{\forces}{\Vdash}
\newcommand{\1}{\mathbbm{0}}
\renewcommand{\1}{\mathbbm{1}}
\newcommand{\val}{\mathrm{val}}
\newcommand{\stem}[1]{\mathrm{stem}(#1)}
\newcommand{\cohen}[1]{\mathrm{Fn}(#1,2)}
\newcommand{\pnmf}{\mathcal{P}(\N)/\mathrm{fin}}
\newcommand{\DD}{\mathbb{D}}
\newcommand{\continuum}{\mathfrak{c}}
\newcommand{\dom}{\mathfrak d}
\newcommand{\bdd}{\mathfrak b}
\renewcommand{\split}{\mathfrak s}
\newcommand{\bld}[1]{\ensuremath{\mathbf {#1}}}
\newcommand{\scr}[1]{\ensuremath{\mathcal {#1}}}
\newcommand{\add}[1]{\ensuremath{\bld{add}(\scr{#1})}}
\newcommand{\non}[1]{\ensuremath{\bld{non}(\scr{#1})}}
\newcommand{\cov}[1]{\ensuremath{\bld{cov}(\scr{#1})}}
\newcommand{\cof}[1]{\ensuremath{\bld{cof}(\scr{#1})}}
\newcommand{\ch}{\ensuremath{\mathsf{CH}}\xspace}
\newcommand{\zfc}{\ensuremath{\mathsf{ZFC}}\xspace}
\newcommand{\ma}{\ensuremath{\mathsf{MA}}\xspace}
\newcommand{\noseq}{\mathfrak{z}}
\newcommand{\sr}{\mathfrak{s}(\R)}
\newcommand{\gen}[1]{\<\!\!\<#1\>\!\!\>}
\begin{document}

\title{Small cardinals and small Efimov spaces}
\author{Will Brian}
\address {
Will Brian\\
Department of Mathematics and Statistics\\
University of North Carolina at Charlotte\\
9201 University City Blvd.\\
Charlotte, NC 28223}
\email{wbrian.math@gmail.com}
\author{Alan Dow}
\address {
Alan Dow\\
Department of Mathematics and Statistics\\
University of North Carolina at Charlotte\\
9201 University City Blvd.\\
Charlotte, NC 28223}
\email{adow@uncc.edu}

\begin{abstract}
We introduce and analyze a new cardinal characteristic of the continuum, the \emph{splitting number of the reals}, denoted $\sr$. This number is connected to Efimov's problem, which asks whether every infinite compact Hausdorff space must contain either a non-trivial convergent sequence, or else a copy of $\b\N$. 
\end{abstract}

\maketitle

\section{Introduction}

This paper is about a new cardinal characteristic of the continuum, the \emph{splitting number of the reals}, denoted $\sr$.

\begin{definition}\label{def:sr}
If $U$ and $A$ are infinite sets, we say that $U$ \emph{splits} $A$ provided that both $A \cap U$ and $A \setminus U$ are infinite. 
The cardinal number $\sr$ is defined as the smallest cardinality of a collection $\U$ of open subsets of $\R$ such that every infinite $A \sub \R$ is split by some $U \in \U$.
\end{definition}

\noindent In this definition, $\R$ is assumed to have its usual topology.
The number $\sr$ is a topological variant of the splitting number $\split$, and is a cardinal characteristic of the continuum in the sense of \cite{Blass}. 

Most of this paper is devoted to understanding the place of $\sr$ among the classical cardinal characteristics of the continuum.
Our main achievement along these lines is to determine completely the place of $\sr$ in Cicho\'n's diagram. More precisely, for every cardinal $\kp$ appearing in Cicho\'n's diagram, we prove either that $\kp$ is a (consistently strict) lower bound for $\sr$, or that $\kp$ is a (consistently strict) upper bound for $\sr$, or else that each of $\kp < \sr$ and $\sr < \kp$ is consistent. 

\vspace{2mm}
\begin{center}
\begin{tikzpicture}[xscale=1.14,yscale=1.44]

\path[fill=green!20, draw=white] (-.5,-.5) -- (7,-.5) -- (7,.5) -- (5,.5) -- (5,1.5) -- (-.5,1.5) -- (-.5,-.5);
\path[fill=yellow!20, draw=white] (-.5,1.5) -- (-.5,2.5) -- (7,2.5) -- (7,1.5) -- (10.5,1.5) -- (10.5,-.5) -- (7,-.5) -- (7,.5) -- (5,.5) -- (5,1.5) -- (-.5,1.5) -- (-.5,2.5);
\path[fill=red!20, draw=white] (7,2.5) -- (10.5,2.5) -- (10.5,1) -- (7.5,1) -- (7.5,1) -- (7,1) -- (7,2.5);

\draw (0,0) -- (4,0);
\draw[dashed] (4.1,0) -- (6,0);
\draw (6,0) -- (8,0);
\draw (2,2) -- (4,2);
\draw[dashed] (4.1,2) -- (6,2);
\draw (6,2) -- (10,2);
\draw (2,0) -- (2,2);
\draw[dashed] (4,-0.05) -- (4,1);
\draw (4,1) -- (4,2);
\draw (4,1) -- (6,1);
\draw (6,0) -- (6,1);
\draw[dashed] (6,.95) -- (6,2);
\draw (8,0) -- (8,2);

\draw [fill=green!20,green!20] (0,0) circle (5pt);  \node at (0,0) {\footnotesize $\aleph_1$};
\draw [fill=green!20,green!20] (2,0) ellipse (17pt and 5pt);  \node at (2,0) {\footnotesize $\add N$};
\draw [fill=green!20,green!20] (4,0) ellipse (18pt and 5pt);  \node at (4,0) {\footnotesize $\add M$};
\draw [fill=green!20,green!20] (6,0) ellipse (17pt and 5pt);  \node at (6,0) {\footnotesize $\cov M$};
\draw [fill=yellow!20,yellow!20] (8,0) ellipse (17pt and 5pt);  \node at (8,0) {\footnotesize $\non N$};
\draw [fill=green!20,green!20] (4,1) circle (5pt);  \node at (4,1) {\footnotesize $\bdd$};
\draw [fill=yellow!20,yellow!20] (6,1) circle (5pt);  \node at (6,1) {\footnotesize $\dom$};
\draw [fill=yellow!20,yellow!20] (2,2) ellipse (16pt and 5pt);  \node at (2,2) {\footnotesize $\cov N$};
\draw [fill=yellow!20,yellow!20] (4,2) ellipse (18pt and 5pt);  \node at (4,2) {\footnotesize $\non M$};
\draw [fill=yellow!20,yellow!20] (6,2) ellipse (17pt and 5pt);  \node at (6,2) {\footnotesize $\cof M$};
\draw [fill=red!20,red!20] (8,2) ellipse (16pt and 5pt);  \node at (8,2) {\footnotesize $\cof N$};
\draw [fill=red!20,red!20] (10,2) circle (4pt);  \node at (10,2) {\footnotesize $\continuum$};

\node at (9.25,1.67) {\footnotesize upper};
\node at (9.25,1.45) {\footnotesize bounds};
\node at (9.4,.6) {\footnotesize incomparable};
\node at (9.4,.35) {\footnotesize cardinals};
\node at (.75,.95) {\footnotesize lower};
\node at (.75,.7) {\footnotesize bounds};

\end{tikzpicture}
\end{center}
\vspace{2mm}



To explain our motivation for investigating the cardinal number $\sr$, we begin with a longstanding open problem of set-theoretic topology:

\vspace{2mm}

\noindent \textbf{Efimov's problem \cite{efimov,hart}:} Does every infinite compact Hausdorff space contain either a non-trivial convergent sequence, or else a copy of $\b\N$?

\vspace{2mm}

\noindent An \emph{Efimov space} is defined to be an infinite compact Hausdorff space containing neither a non-trivial convergent sequence nor a copy of $\b\N$, should such a space exist. 

Since the 1970's, it has been known that a negative solution to Efimov's problem is consistent with \zfc. In other words, it is consistent that Efimov spaces exist. This result is due to Fedor\v{c}uk, who published three separate papers giving three separate constructions of Efimov spaces, each time using a different set-theoretic axiom to facilitate his construction \cite{fedorcuk1, fedorcuk2, fedorcuk3}. 
It is unknown whether a positive solution to Efimov's problem is also consistent, and in this sense the problem remains open.

The cardinal $\sr$ is related to the question of how ``small'' an Efimov space can be, where we measure the smallness of a space by its weight. Recall that the \emph{weight} of a topological space is the smallest size of a basis for that space.
From now on, a \emph{space} will always mean an infinite Hausdorff topological space.
A space is called \emph{non-sequential} if it is not discrete and yet contains no non-trivial convergent sequences.

\begin{definition}
The cardinal number $\noseq$ is defined to be the smallest weight of a compact non-sequential space.
\end{definition}

\noindent The notation $\noseq$ is from the Polish \emph{zbie$\dot{z}$no\'s\'c} meaning \emph{convergence}, and was suggested by Damian Sobota in \cite{Sobota}. Let us record a few observations:

\begin{itemize}
\item[$\circ$] Booth proved in \cite{Booth} that every compact space of weight $< \! \split$ is sequentially compact, and thus contains non-trivial convergent sequences. Hence $\split \leq \noseq$.
\item[$\circ$] Koppelberg proved in \cite{Koppelberg} that every compact space with weight ${<\!\cov M}$ contains a non-trivial convergent sequence. (Her proof is phrased in terms of Boolean algebras; see \cite[Section 2]{Geschke} for a topological translation.) Hence $\cov M \leq \noseq$.
\item[$\circ$] The space $\b\N$, the Stone-\v{C}ech compactification of the countable discrete space $\N$, contains no non-trivial converging sequences, and it follows that $\noseq \leq \continuum = \mathrm{weight}(\b\N)$.
\item[$\circ$] If $\noseq < \continuum$, then there is an Efimov space. This is because any space containing a copy of $\b\N$ must have weight $\geq\!\continuum$. If there is an Efimov space, then $\noseq = \min \set{\mathrm{weight}(X)}{X \text{ is an Efimov space}}$.
\end{itemize}

Thus the number $\noseq$ could be considered a cardinal characteristic of the continuum, one closely tied to Efimov's problem. Unfortunately, $\noseq$ seems difficult to analyze directly. In this paper, we analyze it indirectly by examining the cardinal $\sr$ instead. The cardinals $\sr$ and $\noseq$ are related by the following theorem, whose proof is found in Section~\ref{sec:topology} below.

\begin{introtheorem}
Suppose $\kp$ is a cardinal such that $\sr \leq \kp = \mathrm{cof}(\kp^{\aleph_0},\sub)$. Then $\noseq \,\leq\, \kp$.
\end{introtheorem}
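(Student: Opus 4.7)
The plan is to construct a compact Hausdorff space $X$ of weight at most $\kp$ that contains no non-trivial convergent sequence; since any such space is non-sequential, this witnesses $\noseq \leq \kp$.

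Fix a splitting family $\U = \{U_\xi : \xi < \sr\}$ of open subsets of $\R$, enlarging it if necessary so that $\U$ also separates the points of $\R$ (for instance by adjoining all open intervals with rational endpoints). Reindex with repetitions as $\U = \{U_\alpha : \alpha < \kp\}$, still a splitting family since $\sr \leq \kp$. Using $\kp = \mathrm{cof}(\kp^{\aleph_0},\sub)$, fix a cofinal $\mathcal{F} \sub [\kp]^{\aleph_0}$ with $|\mathcal{F}| = \kp$. Define $\Phi \colon \R \to 2^\kp$ by $\Phi(r)(\alpha) = \chi_{U_\alpha}(r)$, and let $X = \closure{\Phi(\R)} \sub 2^\kp$; as a closed subspace of the Cantor cube, $X$ is compact Hausdorff of weight at most $\kp$.

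To see that $X$ has no non-trivial convergent sequence, suppose toward contradiction that pairwise distinct $y_n \in X$ converge to some $y \in X$. For each pair $n \neq m$, pick $\alpha_{n,m} < \kp$ with $y_n(\alpha_{n,m}) \neq y_m(\alpha_{n,m})$, so that $S = \{\alpha_{n,m} : n \neq m\}$ is countable; the cofinality hypothesis yields $F \in \mathcal{F}$ with $S \sub F$. The projection $\pi_F(X) = \closure{\Phi(\R) \rest F}$ is a compact metric subspace of $2^F$ in which $(y_n \rest F)$ is a pairwise-distinct sequence converging to $y \rest F$; by density of $\Phi(\R) \rest F$ in $\pi_F(X)$ and a diagonal argument, we extract pairwise distinct reals $(r_n)$ with $\Phi(r_n) \rest F \to y \rest F$ in $2^F$. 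Since $\{r_n : n < \w\}$ is infinite, Definition~\ref{def:sr} supplies some $\xi < \sr$ such that $U_\xi$ splits $\{r_n\}$, whence $\Phi(r_n)(\xi) = \chi_{U_\xi}(r_n)$ takes both values infinitely often.

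If $\xi$ happened to lie in $F$, this would immediately contradict $\Phi(r_n)(\xi) \to y(\xi)$. The hard part is that $\xi$ is produced after $F$, so a second use of cofinality is required: enlarge $F$ to $F' \in \mathcal{F}$ containing $F \cup \{\xi\}$, reselect approximating reals coherently with the previous choice, extract a new splitter, and iterate transfinitely; at limit stages the cofinality of $[\kp]^{\aleph_0}$ guarantees an upper bound in $\mathcal{F}$ for any countable ascending chain. Careful bookkeeping delivers a final $F^\ast \in \mathcal{F}$ that absorbs its own splitter, closing the argument. Making this fixed-point procedure rigorous --- in particular ensuring that coherent approximating sequences survive the limits --- is the principal obstacle, and is precisely where the hypothesis $\kp = \mathrm{cof}(\kp^{\aleph_0},\sub)$ is used essentially.
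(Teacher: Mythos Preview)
Your construction has a genuine gap, and not merely in the details of the ``fixed-point procedure'': the space $X=\overline{\Phi(\R)}$ need not be non-sequential at all. Via Stone duality, $X$ is homeomorphic to $\mathsf{st}(\A)$, where $\A$ is the Boolean subalgebra of $\mathcal{P}(\R)$ generated by the $U_\alpha$; nothing about $\U$ being a splitting family for infinite subsets of $\R$ prevents $\mathsf{st}(\A)$ from containing convergent sequences of non-principal ultrafilters. The splitting hypothesis only tells you that no sequence of \emph{principal} ultrafilters (i.e.\ no sequence in $\Phi(\R)$ itself) can converge; it says nothing about points of $X\setminus\Phi(\R)$.

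Your approximation argument does not repair this. You replace $(y_n)$ by reals $(r_n)$ with $\Phi(r_n)\rest F \to y\rest F$, then split $\{r_n\}$ by some $U_\xi$. But the splitter $\xi$ is a function of the particular sequence $(r_n)$, and enlarging $F$ to $F'\ni\xi$ forces a fresh choice $(r_n')$---whose splitter $\xi'$ may again lie outside $F'$. At a limit $F^*=\bigcup_k F_k$ you must choose yet another approximating sequence $(r_n^*)$, and there is no mechanism tying its splitter to any of the earlier $\xi_k$; the ``careful bookkeeping'' you invoke does not exist, because distinct sequences of reals may have entirely unrelated splitters. So the chase never closes.

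The paper's argument avoids this by building the Boolean algebra dynamically rather than fixing it in advance. The key step is Lemma~\ref{lem:noseq}: given any countable $\B$ with $\mathsf{clop}(2^\w)\subseteq\B\subseteq\mathsf{ro}(2^\w)$, one can find $\U\subseteq\mathsf{ro}(2^\w)$ of size $\sr$ such that \emph{every} sequence in $\mathsf{st}(\B)$, when lifted to $\mathsf{st}(\gen{\B\cup\U})$, fails to converge. This ``anticipates'' all possible convergent sequences supported on $\B$ simultaneously, rather than chasing one at a time. One then runs an $\w_1$-length recursion: at stage $\alpha+1$, use $\kp=\mathrm{cof}(\kp^{\aleph_0},\subseteq)$ to cover $[\A_\alpha]^{\aleph_0}$ by $\kp$ sets $C_\xi$, and for each apply the lemma. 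Any convergent sequence in the final Stone space is separated by countably many elements, hence lives in some $\B_\xi$ at some stage $\alpha$, and is killed at stage $\alpha+1$. The cofinality hypothesis is used at every successor step, not just once.
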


\noindent Recall that $\mathrm{cof}(\kp^{\aleph_0},\sub)$ denotes the smallest possible size of a collection $\C$ of countable subsets of $\kp$ such that every countable subset of $\kp$ is contained in some member of $\C$.
If $\kp$ has uncountable cofinality, and if the Covering Lemma holds with respect to some inner model $\mathrm K$ satisfying the $\mathsf{GCH}$, then $\kp = \mathrm{cof}(\kappa^{\aleph_0},\sub)$; see \cite[Section 4]{JMPS} for a proof. Conversely, if $\cf(\kp) > \w$ then the inequality $\kp < \mathrm{cof}(\kp^{\aleph_0},\sub)$ implies the Covering Lemma fails over any such inner model, an assertion of significant large cardinal strength.
(It implies at least that there is an inner model containing a measurable cardinal by results in \cite{DoddJensen}; on the other hand, Gitik obtained in \cite{Gitik} a model in which $\mathrm{cof}(\aleph_{\w+1}^{\aleph_0},\sub) > \aleph_{\w+1}$ using a measurable cardinal $\kp$ of Mitchell order $o(\kp) = \kp^{++}$.) 

Furthermore, the inequality $\kp < \mathrm{cof}(\kp^{\aleph_0},\sub)$ implies $\kp \geq \aleph_\w$. (Again, a proof can be found in \cite[Section 4]{JMPS}.) Thus if $\sr < \aleph_\w$, then we may take $\kp = \sr$ in the above theorem. 
\begin{corollary*}
If $\sr < \aleph_\w$ then $\noseq \leq \sr$.
\end{corollary*}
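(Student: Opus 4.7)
The plan is to apply the theorem stated just above with $\kp = \sr$. That would immediately yield $\noseq \leq \sr$, so the entire task reduces to verifying the equality $\sr = \mathrm{cof}(\sr^{\aleph_0}, \sub)$ under the hypothesis $\sr < \aleph_\w$.

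First I would dispense with the easy inequality $\sr \leq \mathrm{cof}(\sr^{\aleph_0}, \sub)$. This holds whenever $\sr$ is uncountable, since any $\sub$-cofinal $\C \sub [\sr]^{\aleph_0}$ must satisfy $\bigcup \C = \sr$, forcing $|\C| \geq \sr$. That $\sr$ is uncountable follows from the classical inequality $\split \geq \aleph_1$: a countable family $\{U_n : n \in \w\}$ of open sets in $\R$ induces the countable family $\{U_n \cap \N : n \in \w\}$ of subsets of $\N$, and this latter family must fail to split some infinite $A \sub \N \sub \R$.

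For the reverse inequality I would invoke the fact already quoted in the excerpt (see \cite[Section~4]{JMPS}): the strict inequality $\kp < \mathrm{cof}(\kp^{\aleph_0}, \sub)$ forces $\kp \geq \aleph_\w$. Contrapositively, since $\sr < \aleph_\w$ by hypothesis, $\sr \geq \mathrm{cof}(\sr^{\aleph_0}, \sub)$; combining this with the previous paragraph finishes the verification.

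With $\sr = \mathrm{cof}(\sr^{\aleph_0}, \sub)$ established, the theorem applies directly with $\kp = \sr$ and delivers $\noseq \leq \sr$. There is no serious obstacle here: the corollary is essentially a specialization of the main theorem, and the only nontrivial input is the pcf-theoretic fact about the stability of $\mathrm{cof}(\kp^{\aleph_0}, \sub) = \kp$ for $\kp < \aleph_\w$, which is imported wholesale from the cited literature.
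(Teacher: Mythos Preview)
Your proposal is correct and follows exactly the route the paper sketches: apply the theorem with $\kp = \sr$, using the contrapositive of the cited fact from \cite{JMPS} that $\kp < \mathrm{cof}(\kp^{\aleph_0},\sub)$ implies $\kp \geq \aleph_\w$. The paper leaves the easy inequality $\sr \leq \mathrm{cof}(\sr^{\aleph_0},\sub)$ implicit, while you spell it out via the uncountability of $\sr$; otherwise the arguments coincide.
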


The present paper is organized as follows. In Section~\ref{sec:topology} we prove some basic facts about $\sr$ and prove the theorem concerning $\sr$ and $\noseq$ discussed above.
After this, we prove several theorems comparing $\sr$ with other cardinal characteristics of the continuum. Specifically, in Sections~\ref{sec:lower} and \ref{sec:upper} we prove the following bounds on $\sr$ in terms of more familiar cardinals:

\begin{multicols}{2}
\begin{itemize}
\item $\split \leq \sr$ $\vphantom{ \sum_1}$
\item $\bdd \leq \sr$ $\vphantom{ \sum^2}$
\item $\cov M \leq \sr$ $\vphantom{ \sum_1}$
\item $\sr \leq \max \{ \bdd, \non N \}$ $\vphantom{ \sum^2}$
\end{itemize}
\end{multicols}

\noindent (For the definitions of these other cardinal characteristics of the continuum, we refer the reader to \cite{Blass} or \cite{vanDouwen}.) We show that each of these four bounds is consistently strict.
In Section~\ref{sec:cichon}, we also show via forcing the relative consistency of two more inequalities:

\begin{multicols}{2}
\begin{itemize}
\item $\bdd = \sr < \non N$ 
\item $\dom = \cof M < \sr$ 
\end{itemize}
\end{multicols}

\noindent These six results, together with established facts about the cardinals in Cicho\'n's diagram, suffice to pinpoint the location of $\sr$ in Cicho\'n's diagram as described above. This is discussed further in Section~\ref{sec:cichon}. We have also included a short Section~\ref{sec:questions} reviewing the status of Efimov's problem, and stating some open questions related to $\sr$ and $\noseq$.

\section{More on $\sr$ and $\noseq$}\label{sec:topology}

We begin this section by showing that the value of $\sr$ remains unchanged when $\R$ is replaced by any other uncountable Polish space in Definition~\ref{def:sr}, or when the definition is altered by requiring only certain kinds of infinite sets to be split by every $U \in \U$.

\begin{definition}\label{def:sr2}
If $X$ is a topological space, then the number $\split(X)$ is defined as the smallest cardinality of a collection $\U$ of open subsets of $X$ such that every infinite $A \sub X$ is split by some $U \in \U$.
\end{definition}

\begin{lemma}\label{lem:subspace}
If $Y \sub X$, then $\split(Y) \leq \split(X)$.
\end{lemma}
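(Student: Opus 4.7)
The plan is straightforward: pull back a witnessing family from $X$ to $Y$ via intersection. Suppose $\mathcal{U}$ is a family of open subsets of $X$ of cardinality $\split(X)$ that splits every infinite subset of $X$. Let
\[
\mathcal{V} = \{\, U \cap Y : U \in \mathcal{U} \,\}.
\]
Each $U \cap Y$ is open in the subspace topology on $Y$, and $|\mathcal{V}| \leq |\mathcal{U}| = \split(X)$. It therefore suffices to show that $\mathcal{V}$ splits every infinite $A \subseteq Y$.

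Given an infinite $A \subseteq Y$, $A$ is also an infinite subset of $X$, so some $U \in \mathcal{U}$ splits $A$; that is, both $A \cap U$ and $A \setminus U$ are infinite. Since $A \subseteq Y$, we have $A \cap (U \cap Y) = A \cap U$ and $A \setminus (U \cap Y) = A \setminus U$, so $U \cap Y \in \mathcal{V}$ splits $A$. Hence $\split(Y) \leq |\mathcal{V}| \leq \split(X)$.

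There is essentially no obstacle here; the lemma is a routine consequence of the fact that open sets in the subspace topology are exactly restrictions of open sets in $X$, combined with the observation that splitting a subset $A \subseteq Y$ by $U$ and by $U \cap Y$ are the same condition.
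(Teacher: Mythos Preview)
Your proof is correct and follows essentially the same approach as the paper. In fact, your argument is slightly more explicit: the paper simply observes that any $\mathcal{U}$ witnessing $\split(X)$ already splits every infinite $A \subseteq Y$, while you carry out the (trivial) extra step of passing to $\mathcal{V} = \{U \cap Y : U \in \mathcal{U}\}$ so that the witnessing family literally consists of open subsets of $Y$.
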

\begin{proof}
If $\U$ is a family of open subsets of $X$ such that every infinite $A \sub X$ is split by some $U \in \U$, then every infinite $A \sub Y$ is split by some $U \in \U$.
\end{proof}

\begin{lemma}\label{lem:0dim}
Suppose $X$ is an uncountable, zero-dimensional, Borel subspace of a Polish space. Then $\split(X) = \split(2^\w)$.
\end{lemma}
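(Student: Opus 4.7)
The plan is to sandwich $\split(X)$ between $\split(2^\w)$ and itself by constructing homeomorphic embeddings in both directions and applying Lemma~\ref{lem:subspace} twice.

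For the inequality $\split(X) \leq \split(2^\w)$, I would embed $X$ into the Cantor cube. As a subspace of a Polish space, $X$ is second countable; combined with zero-dimensionality this yields a countable base $\{B_n : n \in \w\}$ of clopen sets. The map $f \colon X \to 2^\w$ defined by $f(x)(n) = 1$ if $x \in B_n$ and $0$ otherwise is continuous (each $B_n$ is clopen), injective (a clopen base separates points in a Hausdorff space), and a homeomorphism onto its image (preimages of the subbasic clopens of $2^\w$ are exactly the $B_n$ and their complements, which form a subbase on $X$). Hence $X$ is homeomorphic to a subspace of $2^\w$, and Lemma~\ref{lem:subspace} gives $\split(X) \leq \split(2^\w)$.

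For the reverse inequality $\split(2^\w) \leq \split(X)$, I would invoke the classical perfect set theorem for Borel sets: every uncountable Borel subset of a Polish space contains a subset homeomorphic to $2^\w$. (A perfect subset is extracted via Cantor--Bendixson analysis, and a Cantor set is then built inside it by the standard shrinking-diameter scheme, using that the perfect set has no isolated points.) Applying this to $X$ produces $Y \sub X$ with $Y \homeo 2^\w$, so Lemma~\ref{lem:subspace} gives $\split(2^\w) = \split(Y) \leq \split(X)$.

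There is no serious obstacle here: the argument reduces to two standard classical facts, namely that $2^\w$ is universal for zero-dimensional separable metrizable spaces, and that every uncountable Borel subset of a Polish space contains a copy of the Cantor set. The Borel hypothesis is exactly what licenses the second fact; without it, uncountability alone is not enough to guarantee a Cantor subset in \zfc.
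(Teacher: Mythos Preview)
Your proof is correct and follows exactly the same route as the paper: embed $X$ into $2^\w$ using zero-dimensionality and second countability, embed $2^\w$ into $X$ via the perfect set theorem for Borel sets, and apply Lemma~\ref{lem:subspace} in each direction. The only difference is that the paper cites Kechris for both embedding facts while you sketch their proofs.
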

\begin{proof}
If $X$ is as in the statement of the lemma, then $X$ contains a copy of the Cantor space \cite[Theorem 6.2]{Kechris} and $X$ embeds topologically into the Cantor space \cite[Theorem 7.3]{Kechris}. By the previous lemma, this implies that $\split(2^\w) \leq \split(X) \leq \split(2^\w)$, so $\split(X) = \split(2^\w)$.
\end{proof}

\begin{lemma}\label{lem:split}
$\split(2^\w)$ is uncountable.
\end{lemma}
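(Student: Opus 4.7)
The plan is to prove the lemma by a direct diagonalization: given any countable family $\U = \{U_n : n \in \w\}$ of open subsets of $2^\w$, I will construct an infinite $A \sub 2^\w$ that is not split by any $U_n$, thereby showing that no countable $\U$ can witness the defining property of $\split(2^\w)$.

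First I would build a decreasing sequence of infinite sets
\[
2^\w = A_0 \supseteq A_1 \supseteq A_2 \supseteq \cdots
\]
by recursion, at each stage using $U_n$ to refine $A_n$. Given $A_n$ infinite, at least one of $A_n \cap U_n$ and $A_n \setminus U_n$ must be infinite, so I let $A_{n+1}$ be an infinite one of these two sets. This guarantees that $A_{n+1}$ is either contained in $U_n$ or disjoint from $U_n$.

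Next I would choose, for each $n$, an element $a_n \in A_n$ with $a_n \notin \{a_0, \dots, a_{n-1}\}$; this is possible since each $A_n$ is infinite. Let $A = \{a_n : n \in \w\}$, an infinite subset of $2^\w$. Fix any $n \in \w$: for every $k \geq n+1$ we have $a_k \in A_k \sub A_{n+1}$, and by construction $A_{n+1}$ is either contained in $U_n$ or disjoint from $U_n$. Hence either $A \setminus U_n$ or $A \cap U_n$ is contained in the finite set $\{a_0, \dots, a_n\}$, so $U_n$ does not split $A$. Since this holds for every $n$, the family $\U$ fails to split $A$, and thus $\U$ cannot witness $\split(2^\w) \leq \aleph_0$.

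There is no real obstacle here; the argument is essentially a textbook diagonalization, using only the fact that $2^\w$ is infinite and that for any set $U$ and any infinite $B$, one of $B \cap U$ or $B \setminus U$ is infinite. (In fact this proof shows $\split(X) > \aleph_0$ for every infinite set $X$ and every collection of subsets of $X$, independently of the topology, so the hypothesis that we are in $2^\w$ is not used.)
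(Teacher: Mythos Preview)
Your proof is correct. The diagonalization is the standard argument showing that the classical splitting number $\split$ is uncountable, and as you observe, it works for any infinite set regardless of topology.

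The paper takes a slightly different route: rather than redoing the diagonalization, it invokes the known fact that $\split$ (which equals $\split(\N)$ for the discrete space $\N$) is uncountable, and then applies the subspace lemma (Lemma~\ref{lem:subspace}) to conclude $\split \leq \split(2^\w)$ since $\N$ embeds in $2^\w$. Your approach is more self-contained and elementary, unpacking exactly the diagonalization that underlies the inequality $\split > \aleph_0$; the paper's approach is shorter because it offloads that work to a citation, and it has the minor side benefit of explicitly recording the inequality $\split \leq \split(2^\w)$, which is used again later (e.g.\ in Lemma~\ref{lem:splitlb}).
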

\begin{proof}
It follows immediately from the definitions that if $\N$ has the discrete topology, then $\split(\N)$ is equal to the splitting number $\split$ (which is uncountable). As $\split \leq \split(2^\w)$ by Lemma~\ref{lem:subspace}, $\sr$ is uncountable.
\end{proof}

\begin{theorem}\label{thm:Polish}
If $X,Y$ are uncountable Polish spaces, then $\split(X) = \split(Y)$.
\end{theorem}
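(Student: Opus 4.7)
To prove that $\split(X) = \split(Y)$ for arbitrary uncountable Polish spaces $X, Y$, I plan to establish both inequalities $\split(2^\w) \le \split(X)$ and $\split(X) \le \split(2^\w)$ (and analogously for $Y$). The first inequality is immediate: by a classical theorem of Kechris \cite[Theorem~6.2]{Kechris}, every uncountable Polish space contains a homeomorphic copy of the Cantor space, whence Lemma~\ref{lem:subspace} delivers $\split(2^\w) \le \split(X)$.

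For the harder inequality $\split(X) \le \split(2^\w)$, my first step is to embed $X$ as a topological subspace of the Hilbert cube $[0,1]^\w$ via the Urysohn metrization theorem, reducing (by Lemma~\ref{lem:subspace}) to the task of bounding $\split([0,1]^\w)$. The natural zero-dimensional foothold is the dense $G_\delta$ Borel subspace $Y = ([0,1] \setminus \Q)^\w$ of the Hilbert cube: by Lemma~\ref{lem:0dim}, $Y$ admits a splitting family $\U_Y$ of cardinality $\split(2^\w)$, and by choosing for each $U \in \U_Y$ an open extension $\tilde U \sub [0,1]^\w$ with $\tilde U \cap Y = U$, I obtain a family of open subsets of $[0,1]^\w$ of size $\split(2^\w)$ that splits every infinite $A \sub [0,1]^\w$ whose intersection $A \cap Y$ is infinite.

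To handle the remaining infinite subsets of $[0,1]^\w$---those with $A \cap Y$ finite---I plan to apply the same construction recursively inside each coordinate hyperplane $F_{n, q} = \{x : x_n = q\} \cong [0,1]^\w$ and inside their finite intersections, since the complement $[0,1]^\w \setminus Y$ is the countable union $\bigcup_{n \in \w, q \in \Q} F_{n, q}$. The tree of such finite tuples is countable, each node contributes a splitting family of size $\split(2^\w)$, yielding total cardinality $\split(2^\w)$. I would additionally supplement this with a splitting family for a fixed Cantor subspace such as $\{0,1\}^\w \sub [0,1]^\w$ (again of size $\split(2^\w)$ by Lemma~\ref{lem:0dim}) in order to cover infinite sets contained in Cantor-like subspaces whose coordinate projections are all finite.

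The principal obstacle is verifying completeness of the combined family: the delicate case involves countable convergent sequences whose coordinate projections are all finite and whose points avoid the zero-dimensional $G_\delta$ pieces at every level of the recursion. I expect a pigeonhole argument---exploiting that indefinite failure of the recursion to terminate would confine $A$ to a single point of $[0,1]^\w$, contradicting its infiniteness---to ultimately suffice, but the technical details require a careful case analysis that I have not worked out in full here.
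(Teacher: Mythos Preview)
Your reduction to showing $\split([0,1]^\w) \leq \split(2^\w)$ matches the paper's, but the recursive hyperplane decomposition has a genuine gap that is not merely unfinished case analysis. Consider the diagonal sequence $A = \{a_k : k \geq 1\}$ with $a_k = (1/k, 1/k, 1/k, \ldots) \in [0,1]^\w$. Every coordinate of every $a_k$ is rational, so $A \cap Y = \emptyset$; yet each hyperplane $F_{n,q} = \{x : x_n = q\}$ contains at most one point of $A$, so there is no hyperplane to recurse into. Your supplemental $\{0,1\}^\w$ family does not help since $A \cap \{0,1\}^\w$ is finite, and your proposed pigeonhole heuristic does not apply since every coordinate projection of $A$ is the infinite set $\{1/k : k \geq 1\}$. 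The deeper obstruction is dimension-theoretic: the Hilbert cube is strongly infinite-dimensional and in particular is not a countable union of zero-dimensional subspaces, so no countable scheme of this shape can succeed.

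The paper's solution is to pass to a chain of length $\w_1$. Using a Hausdorff gap, one writes $[0,1]$ as an increasing union $\bigcup_{\a < \w_1} Y_\a$ of zero-dimensional $G_\delta$ subsets; taking $\w$-th powers yields $[0,1]^\w = \bigcup_{\a < \w_1} Z_\a$ with each $Z_\a$ an uncountable zero-dimensional Borel subspace. Since the chain is increasing with uncountable cofinality, every countable subset of $[0,1]^\w$ lies in some single $Z_\a$, so every infinite $A$ meets some $Z_\a$ in an infinite set, and a splitting family for that $Z_\a$ (of size $\split(2^\w)$ by Lemma~\ref{lem:0dim}) handles $A$. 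The total is $\aleph_1 \cdot \split(2^\w)$ open sets, and Lemma~\ref{lem:split} absorbs the factor of $\aleph_1$.
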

\begin{proof}
Let $H$ denote the Hilbert cube $[0,1]^{\aleph_0}$. To prove the theorem, it suffices to show that $\split(H) \leq \split(2^\w)$. This is because, if $X$ is any uncountable Polish space, then $X$ contains a copy of the Cantor space \cite[Theorem 6.2]{Kechris} and $X$ embeds into the Hilbert cube \cite[Theorem 4.14]{Kechris}. By Lemma~\ref{lem:subspace}, it follows that $\split(2^\w) \leq \split(X) \leq \split(H)$, so if $\split(H) \leq \split(2^\w)$ then $\split(H) = \split(2^\w) = \split(X)$ for every uncountable Polish space $X$.

To see that $\split(H) \leq \split(2^\w)$, we use a slight variation of a result of Hausdorff \cite{Hausdorff}, which states that the Baire space $\w^\w$ can be written as an increasing union $\bigcup_{\a < \w_1} X_\a$, where each $X_\a$ is a $G_{\dlt}$ subspace of $\w^\w$.

This result is a relatively straightforward consequence of the existence of Hausdorff gaps. Recall that a Hausdorff gap is a sequence $\seq{(f_\a,g_\a)}{\a < \w_1}$ of pairs of functions $\w \to \w$ such that 
\begin{itemize}
\item $f_\a <^* f_\b <^* g_\b <^* g_\a$ for every $\a < \b < \w_1$ (where, as usual, $f <^* g$ means that $f(n) < g(n)$ for all but finitely many $n \in \w$).
\item there is no $h: \w \to \w$ such that $f_\a <^* h <^* g_\a$ for all $\a < \w_1$.
\end{itemize}
Taking $X_\a = \set{f \in \w^\w}{f_\a <^* f <^* g_\a}$ for each $\a < \w_1$, one may check that each $X_\a$ is $G_\dlt$ and that $\w^\w = \bigcup_{\a < \w_1}X_\a$. 

Recall that $\w^\w$ is homeomorphic to $[0,1] \setminus \Q$, so we may write $[0,1] \setminus \Q = \bigcup_{\a < \w_1}X_\a$,  and by setting $Y_n = X_n \cup \{$the $n^{\mathrm{th}}$ rational number in $[0,1]\}$ and $Y_\a = X_\a$ for $\a \geq \w$, we get  $[0,1] = \bigcup_{\a < \w_1}Y_\a$, where each $Y_\a$ is a zero-dimensional, $G_{\dlt}$ subspace of $[0,1]$.
Furthermore, each $Y_\a$ is uncountable (because each $X_\a$ is, as one may easily check).
Taking $Z_\a = Y_\a^\w$ for every $\a < \w_1$, we arrive at the desired modification of Hausdorff's result: the Hilbert cube $H$ is an increasing union $\bigcup_{\a < \w_1}Z_\a$ of uncountable, Borel, zero-dimensional subspaces. 

For each $\a < \w_1$, let $\U_\a$ be a family of at most $\split(2^\w)$ open subsets of $H$ such that every infinite $A \sub Z_\a$ is split by some $U \in \U_\a$. (Some such $\U_\a$ exists by Lemma~\ref{lem:0dim}.) Let $\U = \bigcup_{\a < \w_1}\U_\a$. 

Suppose $A \sub H$ is infinite. Then there is some $\a < \w_1$ such that $A \cap Z_\a$ is infinite. Thus there is some $U \in \U_\a$ such that both $(A \cap Z_\a) \cap U$ and $(A \cap Z_\a)\setminus U$ are infinite. But then both $A \cap U$ and $A \setminus U$ are infinite as well, so $U$ splits $A$.

This shows that $\split(H) \leq |\U| = \card{\bigcup_{\a < \w_1}U_\a} \leq \aleph_1 \cdot \split(2^\w)$. Lemma~\ref{lem:split} implies that $\aleph_1 \cdot \split(2^\w) = \split(2^\w)$, so $\split(H) \leq \split(2^\w)$, as desired.
\end{proof}

\begin{theorem}
Let $X$ be a Polish space. Let $[X]^\w$ denote the set of all countably infinite subsets of $X$, and let $[X]^{con}$ denote the set of all non-trivial convergent sequences in $X$, considered as sets rather than sequences. If we define
\begin{itemize}
\item $\split^\w(X)$ to be the smallest cardinality of a collection $\U$ of open subsets of $X$ such that every $A \in [X]^\w$ is split by some $U \in \U$, and
\item $\split^{con}(X)$ to be the smallest cardinality of a collection $\U$ of open subsets of $X$ such that every $A \in [X]^{con}$ is split by some $U \in \U$
\end{itemize}
then $\split^\w(X) = \split^{con}(X) = \split(X) = \sr$.
\end{theorem}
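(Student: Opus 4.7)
The plan is to prove the chain of inequalities
\[
\split^{con}(X) \,\leq\, \split^\w(X) \,\leq\, \split(X) \,\leq\, \split^{con}(X),
\]
combined with the identification $\split(X) = \sr$ from Theorem~\ref{thm:Polish}; I read the hypothesis of the theorem as implicitly requiring $X$ to be uncountable, since for a countable discrete space $[X]^{con}$ is empty and the stated equality fails. The first two inequalities are immediate from the inclusions $[X]^{con} \sub [X]^\w \sub \set{A\sub X}{A\text{ infinite}}$: any family of open sets that splits every member of the larger class a fortiori splits every member of the smaller one. For the reverse direction $\split(X) \leq \split^\w(X)$, I would use the standard trick that if $A \sub X$ is infinite and $A' \sub A$ is countably infinite, then any open $U$ splitting $A'$ also splits $A$, since $A \cap U \supseteq A' \cap U$ and $A \setminus U \supseteq A' \setminus U$ are both infinite.

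The one nontrivial step is $\split(X) \leq \split^{con}(X)$, and I would approach it by first handling the Cantor space $X = 2^\w$. Because $2^\w$ is sequentially compact, every countably infinite $A \sub 2^\w$ contains (as a set, after thinning an enumeration to a convergent subsequence) a non-trivial convergent sequence $S$; any $U$ splitting $S$ then splits $A$, so $\split^\w(2^\w) \leq \split^{con}(2^\w)$. Together with the easy direction and Theorem~\ref{thm:Polish}, this yields $\split^{con}(2^\w) = \sr$.

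For a general uncountable Polish $X$, fix a topological embedding $f \colon 2^\w \hookrightarrow X$, which exists by \cite[Theorem 6.2]{Kechris}. Given any family $\U$ of open subsets of $X$ splitting $[X]^{con}$, form the pullback family $\V = \set{f^{-1}(U)}{U \in \U}$ of open subsets of $2^\w$; clearly $|\V| \leq |\U|$. For any $S \in [2^\w]^{con}$ the image $f(S)$ is a non-trivial convergent sequence in $X$ (continuous injective image of one), hence is split by some $U \in \U$. Since $f$ is a bijection onto its image, $f(S \cap f^{-1}(U)) = f(S) \cap U$ and $f(S \setminus f^{-1}(U)) = f(S) \setminus U$, so $f^{-1}(U)$ splits $S$. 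Thus $\V$ splits $[2^\w]^{con}$, giving $|\U| \geq |\V| \geq \split^{con}(2^\w) = \sr$.

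The main obstacle to a direct proof is that for a general Polish $X$ one cannot argue as in $2^\w$: it is simply false that every countably infinite subset of $X$ contains a convergent subsequence (for example $\N \sub \R$ is closed discrete). The embedding of the Cantor space is the device that reduces the general case to the compact one, where every countably infinite set admits a convergent subsequence and the equivalence with $[X]^{con}$ becomes transparent.
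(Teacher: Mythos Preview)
Your proposal is correct and follows essentially the same route as the paper: reduce to $2^\w$ via an embedding (using \cite[Theorem 6.2]{Kechris}), and then exploit the sequential compactness of $2^\w$ to pass from arbitrary infinite sets to convergent sequences. Your observation that the hypothesis must be read as ``uncountable Polish'' is apt (and the paper tacitly assumes this); the paragraph on $\split(X) \leq \split^\w(X)$ is correct but redundant once the full cycle of inequalities is closed.
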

\begin{proof}
Clearly $[X]^{con} \sub [X]^\w$ and every $A \in [X]^\w$ is infinite. It follows that $\split^{con}(X) \leq \split^\w(X) \leq \split(X)$.

As in the proof of Lemma~\ref{lem:subspace}, if $\U$ is any family that splits every $A \in [X]^{con}$, then $\U$ also splits every $A \in [Y]^{con}$ for any subspace $Y$ of $X$. As $2^\w$ embeds in $X$ \cite[Theorem 6.2]{Kechris}, it follows that $\split^{con}(2^\w) \leq \split^{con}(X)$. Thus
$$\split^{con}(2^\w) \leq \split^{con}(X) \leq \split^\w(X) \leq \split(X) = \split(2^\w)$$
(where the final equality follows from the previous theorem). Thus, to finish the proof of the theorem, it suffices to show that $\split(2^\w) \leq \split^{con}(2^\w)$.

Suppose $\U$ is a collection of open subsets of $2^\w$ with $|\U| \leq \split^{con}(2^\w)$ such that every $A \in [2^\w]^{con}$ is split by some $U \in \U$. If $B$ is any infinite subset of $2^\w$, then (because $2^\w$ is a compact metrizable space) there is some $A \sub B$ with $A \in [2^\w]^{con}$, and hence some $U \in \U$ that splits $A$. But then $U$ splits $B$ as well. Thus every infinite subset of $2^\w$ is split by some $U \in \U$, and it follows that $\split(2^\w) \leq \split^{con}(2^\w)$ as desired.
\end{proof}

We now move on to the proof of the theorem mentioned in the introduction connecting $\sr$ with $\noseq$. The proof bears some resemblance to Fedor\v{c}uk's construction of an Efimov space in \cite{fedorcuk3}, which uses the hypotheses $2^{\aleph_0} = 2^{\aleph_1}$ and $\split = \aleph_1$. It bears an even stronger resemblance to van Douwen and Fleissner's refinement of Fedor\v{c}uk's construction in \cite[Section 2.5]{vanDouwen&Fleissner}, where they construct an Efimov space of weight $\aleph_1$ from the hypotheses $2^{\aleph_0} = 2^{\aleph_1}$ and $\sr = \aleph_1$ (although, of course, the latter hypothesis is not phrased in this way). In some sense, the proof presented here simply optimizes their strategy, and isolates the critical cardinal invariant for the construction.

We also point out that a similar theorem was obtained by Damian Sobota in \cite[Section 8]{Sobota2}. He showed that if there is a cardinal $\kp$ such that $\cof N \leq \kp = \mathrm{cof}(\kp^{\aleph_0},\sub) < \continuum$, then there is an Efimov space of weight $\kp$. Equivalently: if $\cof N \leq \kp = \mathrm{cof}(\kp^{\aleph_0},\sub) < \continuum$ then $\noseq \leq \kp$.
As $\sr \leq \max\{\bdd,\non N\} \leq \cof N$ (with both inequalities being consistently strict), Theorem~\ref{thm:noseq} can be viewed as a tightening of Sobota's result.

\begin{lemma}\label{lem:splitlb}
$\split \leq \sr$.
\end{lemma}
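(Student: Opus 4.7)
The plan is to mimic the short argument already given for $\split\leq\split(2^\w)$ in Lemma~\ref{lem:split}, but with $\R$ in place of $2^\w$. Specifically, I would first observe that $\N$, regarded as a subspace of $\R$, inherits the discrete topology (each singleton $\{n\}$ is open in $\N\subseteq\R$, since $\{n\}=\N\cap(n-\tfrac12,n+\tfrac12)$). Therefore the open subsets of $\N$ (as a subspace of $\R$) are exactly all subsets of $\N$, so the quantity $\split(\N)$ in the sense of Definition~\ref{def:sr2} coincides with the classical splitting number $\split$ of $\pwmf$.

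With that identification in hand, I would invoke Lemma~\ref{lem:subspace} with $Y=\N$ and $X=\R$ to conclude
\[
\split \;=\; \split(\N) \;\leq\; \split(\R) \;=\; \sr,
\]
which is the desired inequality.

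There is essentially no obstacle here; the only thing worth double-checking is the trivial point that every subset of $\N$ is realized as $U\cap\N$ for some open $U\subseteq\R$, so that the ``open sets of $\R$'' allowed by the definition of $\sr$ are at least as powerful as arbitrary subsets of $\N$ when restricted to $\N$. Once that is noted, the proof is a single line and no separate argument beyond Lemma~\ref{lem:subspace} is required.
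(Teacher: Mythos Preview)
Your proposal is correct and matches the paper's own argument almost verbatim: the paper's proof notes that the argument of Lemma~\ref{lem:split} (namely $\split=\split(\N)\leq\split(X)$ via Lemma~\ref{lem:subspace}) applies equally well with $\R$ in place of $2^\w$, which is precisely your route. The paper's primary phrasing routes through $\split(2^\w)$ and then invokes Theorem~\ref{thm:Polish} to get $\split(2^\w)=\sr$, but it explicitly remarks parenthetically that one may instead argue directly with $\R$, exactly as you do.
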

\begin{proof}
The proof of Lemma~\ref{lem:split} shows that $\split \leq \split(2^\w)$, and $\split(2^\w) = \sr$ by Theorem~\ref{thm:Polish}. (Alternatively, one may note that the proof of Lemma~\ref{lem:split} applies to $\sr$ just as well as $\split(2^\w)$.)
\end{proof}

A subset $U$ of a topological space is called \emph{regular open} if $U = \mathrm{int}(\closure{U})$. 
The regular open subsets of a topological space $X$ form a complete Boolean algebra, called the \emph{regular open algebra of $X$} and denoted $\mathsf{ro}(X)$. If $X$ is a Stone space (i.e., compact, Hausdorff, and zero-dimensional) then $\mathsf{ro}(X)$ is the Boolean completion of $\mathsf{clop}(X)$, the Boolean algebra consisting of all clopen subsets of $X$. 

In the following two proofs, we will be looking at $\mathsf{ro}(2^\w)$, its subalgebras, and their Stone spaces. If $A \sub \mathsf{ro}(2^\w)$, then $\gen{A}$ denotes the subalgebra of $\mathsf{ro}(2^\w)$ generated by $A$. For any subalgebra $\B$ of $\mathsf{ro}(2^\w)$, we denote its Stone space by $\mathsf{st}(\B)$. Recall that $\mathsf{st}(\B)$ consists of all ultrafilters on $\B$. Thus whenever $\A$ is a subalgebra of $\B$: if $x$ is a point of $\mathsf{st}(\B)$ then $x \cap \A$ is a point of $\mathsf{st}(\A)$ and, conversely, if $x \in \mathsf{st}(\A)$ then there is some $y \in \mathsf{st}(\B)$ with $y \cap \mathsf{st}(\A) = x$.

\begin{lemma}\label{lem:noseq}
Suppose $\B$ is a countable Boolean algebra such that 
$$\mathsf{clop}(2^\w) \sub \B \sub \mathsf{ro}(2^\w).$$
There is a subset $\U \sub \mathsf{ro}(2^\w)$ with $\card{\U} = \sr$ having the following property:
\begin{itemize}
\item[$(*)$] Let $\seq{x_n}{n \in \N}$ be a sequence of points in $\mathsf{st}(\B)$, and for each $n$ let $y_n$ be a point of $\mathsf{st}(\gen{\B \cup \U})$ such that $y_n \cap \B = x_n$. Then the sequence $\seq{y_n}{n \in \N}$ does not converge in $\mathsf{st}(\gen{ \B \cup \U })$.
\end{itemize} 
\end{lemma}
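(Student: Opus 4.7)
Since $\B$ is a countable Boolean algebra containing $\mathsf{clop}(2^\w)$, the Stone space $\mathsf{st}(\B)$ is a compact metrizable zero-dimensional space which surjects continuously onto $2^\w$; in particular $\card{\mathsf{st}(\B)} = \continuum$, so $\mathsf{st}(\B)$ is an uncountable Polish space and Theorem~\ref{thm:Polish} gives $\split(\mathsf{st}(\B)) = \sr$. The algebras $\mathsf{ro}(\mathsf{st}(\B))$ and $\mathsf{ro}(2^\w)$ are both the Dedekind--MacNeille completion of $\mathsf{clop}(2^\w)$, so there is a canonical isomorphism $\Phi\colon \mathsf{ro}(\mathsf{st}(\B))\to \mathsf{ro}(2^\w)$ which is the identity on $\mathsf{clop}(2^\w)$ and sends the clopen $\hat b \subseteq \mathsf{st}(\B)$ back to $b \in \mathsf{ro}(2^\w)$ for each $b \in \B$. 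The critical property of $\Phi$ for our purposes: given $V \in \mathsf{ro}(\mathsf{st}(\B))$ and $x \in \mathsf{st}(\B)$, the point $x$ lies in $V$ if and only if every ultrafilter $y$ on $\gen{\B \cup \{\Phi(V)\}}$ with $y \cap \B = x$ contains $\Phi(V)$, while $x$ lies in the Boolean complement $V^{\complement} = \mathsf{st}(\B)\setminus \closure V$ iff every such lift $y$ contains $\Phi(V)^{\complement}$; for $x \in \partial V$, both kinds of lift are realized.

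The plan is to construct a family $\V \subseteq \mathsf{ro}(\mathsf{st}(\B))$ of size $\sr$ that \emph{strongly} splits every nontrivial convergent sequence in $\mathsf{st}(\B)$: for each such sequence $(x_n) \to x$ some $V \in \V$ satisfies $\card{\{n : x_n \in V\}} = \card{\{n : x_n \in V^{\complement}\}} = \aleph_0$. Setting $\U = \{\Phi(V) : V \in \V\} \sub \mathsf{ro}(2^\w)$, padded to size $\sr$ if needed, property $(*)$ follows: suppose $(y_n) \to y$ in $\mathsf{st}(\gen{\B\cup\U})$, and let $x_n = y_n \cap \B$ and $x = y \cap \B$. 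Then $(x_n)\to x$ in $\mathsf{st}(\B)$, and after passing to a subsequence we may assume the $x_n$ form a nontrivial convergent sequence. A strong splitter $V \in \V$ then forces $\Phi(V) \in y_n$ for all (infinitely many) $n$ with $x_n \in V$ and $\Phi(V) \notin y_n$ for all (infinitely many) $n$ with $x_n \in V^{\complement}$, contradicting the eventual agreement of $y_n$ with $y$ on $\Phi(V)$ forced by convergence.

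The main obstacle is producing the strong splitting family $\V$ of size $\sr$. A mere open splitting family of size $\sr$, as given by Theorem~\ref{thm:Polish}, is insufficient, because the topological boundary of a splitter can absorb infinitely many $x_n$, where the lifts are not forced one way or the other. I would overcome this by exploiting the compact zero-dimensional metrizable structure of $\mathsf{st}(\B)$: any nontrivial convergent sequence admits pairwise disjoint clopen neighborhoods of its terms avoiding the limit, and an alternating union of such clopens is a regular open set that strongly splits the sequence. Organizing these constructions via an enumeration of the countable algebra $\B$ and an application of $\split \leq \sr$ to obtain $\sr$-many subsets of $\w$ splitting every infinite subset, one distills a family of $\sr$-many regular open sets in $\mathsf{ro}(\mathsf{st}(\B))$ that strongly splits every nontrivial convergent sequence.
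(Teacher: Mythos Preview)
Your approach is essentially the paper's. The reduction to producing a \emph{strong} splitting family $\V \subseteq \mathsf{ro}(\mathsf{st}(\B))$ of size $\sr$ is exactly the right move, your setup with the isomorphism $\Phi$ is correct, and the verification that strong splitting of $(x_n)$ prevents convergence of any lifts $(y_n)$ is fine.

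The only gap is the last paragraph, where the construction of $\V$ is gestured at rather than carried out. As written, it is not clear how ``an enumeration of $\B$'' (countably many objects) together with a $\split$-sized splitting family on $\w$ yields $\sr$-many strong splitters; the open splitting family $\U_0$ of size $\sr$ that you invoked earlier must re-enter the construction, and you should say exactly how. The paper makes this explicit: for each $U \in \U_0$ (taken non-clopen), write $U = \bigcup_{n}[b^U_n]$ as a countable disjoint union of nonempty clopens from $\B$; fix a splitting family $\F \subseteq [\w]^\w$ of size $\split$; and set $\V = \{\mathrm{int}(\closure{\bigcup_{n \in A}b^U_n}) : U \in \U_0,\ A \in \F\}$, which has size $\sr \cdot \split = \sr$. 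Given a nontrivial convergent sequence $(x_n) \to x$, choose $U \in \U_0$ splitting it; then $x \notin U$, so each $[b^U_m]$ contains only finitely many of the $x_n$, whence $B = \{m : [b^U_m] \cap \{x_n : n \in \N\} \neq \0\}$ is infinite. Any $A \in \F$ splitting $B$ produces a member of $\V$ that strongly splits $(x_n)$. This is presumably what you intend; it just needs to be written out.
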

\begin{proof}
Let $\B$ be a countable Boolean algebra with 
$\mathsf{clop}(2^\w) \sub \B \sub \mathsf{ro}(2^\w)$. $\B$ is atomless because $\B \supseteq \mathsf{clop}(2^\w)$ and $\mathsf{clop}(2^\w)$ is atomless and dense in $\mathsf{ro}(2^\w)$. As $\B$ is a countable, atomless Boolean algebra, it is isomorphic to $\mathsf{clop}(2^\w)$, and $\mathsf{st}(\B)$ is homeomorphic to $2^\w$.
Hence there is a family $\U_0$ of open subsets of $\mathsf{st}(\B)$ with $|\U_0| = \sr$ such that every member of $[\mathsf{st}(\B)]^{con}$ is split by some $U \in \U_0$. 

Observe that no $X \in [\mathsf{st}(\B)]^{con}$ is split by a clopen set. (If $x$ is the limit point of $X$, then $X \cap C$ is either finite or co-finite, depending on whether $x \notin C$ or $x \in C$ respectively.) Thus, by removing any clopen sets from $\U_0$ if necessary, we may (and do) assume that no $U \in \U_0$ is clopen. 

For each $b \in \B$ let $[b] = \set{x \in \mathsf{st}(\B)}{b \in x}$, and recall that the sets of this form constitute the canonical basis of clopen sets for $\mathsf{st}(\B)$. For each $U \in \U_0$, let us fix a sequence $\seq{b^U_n}{n \in \N}$ of nonempty, pairwise disjoint members of $\B$ such that $U = \bigcup_{n \in \N}[b^n_U]$. This is possible because $\B$ is countable and because $U$ is open, but not clopen, in $\mathsf{st}(\B)$.

Let $\F$ be a family of infinite subsets of $\N$ such that $\card{\F} = \split$ and every infinite subset of $\N$ is split by some $A \in \F$.
For each $A \in \F$ and $U \in \U_0$, define $R(U,A) = \mathrm{int}\!\left(\closure{\bigcup_{n \in A}b^n_U}\right)$. Each $b^n_U$ is an open subset of $2^\w$; in this definition, the interior and the closure are taken in $2^\w$, not in $\mathsf{st}(\B)$.
Let
$$\U = \set{R(U,A)}{U \in \U_0 \text{ and } A \in \F}.$$
We claim that this $\U$ is as required. 

One may easily check that $\mathrm{int}\!\left(\closure{\mathrm{int}(\closure{V})}\right) = \mathrm{int}(\closure{V})$ for any open set $V$ (in any space).
Hence each $R(U,A)$ is a regular open subset of $2^\w$.
Also, $|\U| = |\U_0| \cdot |\F| = \sr \cdot \split = \sr$ (where the final equality holds because $\split \leq \sr$ by Lemma~\ref{lem:splitlb}). It remains to check property $(*)$. 

Let $\seq{x_n}{n \in \N}$ be a sequence of points in $\mathsf{st}(\B)$, and for each $n$ let $y_n$ be a point of $\mathsf{st}(\gen{\B \cup \U})$ such that $y_n \cap \B = x_n$. Let $X = \set{x_n}{n \in \N}$ and $Y = \set{y_n}{n \in \N}$, and fix some $U \in \U_0$ that splits $X$. Note that the limit point of $X$ is not in $U$ (as then $U$ would not split $X$). Let 
$$B = \set{m \in \N}{X \cap [b^m_U] \neq \0}.$$
Any particular $[b^m_U]$ contains only finitely many points of $X$, because it is closed and $X$ converges to a point outside $[b^m_U]$. This implies $B$ is infinite. 
Thus there is some $A \in \F$ such that $A$ splits $B$. 
$R(U,A) \in \U$, and we claim that $[R(U,A)]$ splits $Y$ in $\mathsf{st}(\gen{\B \cup \U})$.

To see this, first notice that if $m \in A$ then $b^m_U \sub R(U,A)$, and if $m \notin A$ then $b^m_U \cap A = \0$. (This follows easily from the definition of $R(U,A)$ and the fact that the $b^m_U$ are open and pairwise disjoint.)

Because $A$ splits $B$, there are infinitely many $m \in A$ such that $[b^m_U] \cap X \neq \0$, and $[b^m_U] \sub [R(U,A)]$ in $\mathsf{st}(\gen{\B \cup \U})$ for all such $m$. Also, if $x_n \in [b^m_U]$ in $\mathsf{st}(\B)$ then $y_n \in [b^m_U] \sub [R(U,A)]$ in $\mathsf{st}(\gen{\B \cup \U})$. It follows that $Y \cap [R(U,A)] \supseteq X \cap \bigcup_{m \in A}[b^m_U]$ is infinite.
Similarly, there are infinitely many $m \notin A$ such that $[b^m_U] \cap X \neq \0$, and $[b^m_U] \cap [U] = \0$ in $\mathsf{st}(\gen{\B \cup \U})$ for all such $m$. As before, if $x_n \in [b^m_U]$ in $\mathsf{st}(\B)$ then $y_n \in [b^m_U] \sub [R(U,A)]$ in $\mathsf{st}(\gen{\B \cup \U})$, and it follows that $Y \setminus [R(U,A)] \supseteq Y \cap \bigcup_{n \notin A}[b^n_U]$ is infinite.

Hence $[R(U,A)]$ splits $Y$ in $\mathsf{st}(\gen{\B \cup \U})$. But no clopen set can split a convergent sequence; thus, as $[R(U,A)]$ is clopen in $\mathsf{st}(\gen{\B \cup \U})$, $Y$ does not converge in $\mathsf{st}(\gen{\B \cup \U})$.
\end{proof}

\begin{theorem}\label{thm:noseq}
Suppose $\kp$ is a cardinal such that $\sr \leq \kp = \mathrm{cof}(\kp^{\aleph_0},\sub)$. Then $\noseq \,\leq\, \kp$.
\end{theorem}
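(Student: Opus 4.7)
The plan is to construct, by transfinite induction of length $\kp$, a Boolean subalgebra $\BB$ of $\mathsf{ro}(2^\w)$ of size $\kp$ containing $\mathsf{clop}(2^\w)$, such that $\mathsf{st}(\BB)$ contains no non-trivial convergent sequence. Since $\BB$ is atomless (it extends $\mathsf{clop}(2^\w)$), $\mathsf{st}(\BB)$ will then be an infinite, compact, non-discrete Hausdorff space of weight $\leq \kp$, and this will witness $\noseq \leq \kp$.

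To set up, I would use $\kp = \mathrm{cof}(\kp^{\aleph_0},\sub)$ to fix a family $\set{C_\a}{\a < \kp}$ of countable subsets of $\kp$ that is cofinal under $\sub$. Then I would build an increasing chain $\BB_\a$ of subalgebras of $\mathsf{ro}(2^\w)$ starting from $\BB_0 = \mathsf{clop}(2^\w)$, taking unions at limits, and simultaneously maintain an enumeration $\BB = \set{b_\xi}{\xi < \kp}$ so that countable subsets of $\BB$ correspond to countable subsets of $\kp$. At each successor stage $\a+1$, bookkeeping driven by the $C_\a$'s and the enumeration so far would select a countable $S_\a \sub \BB_\a$; I would form the countable subalgebra $\A_\a = \gen{S_\a \cup \mathsf{clop}(2^\w)}$ of $\BB_\a$, apply Lemma~\ref{lem:noseq} to obtain $\U_\a \sub \mathsf{ro}(2^\w)$ of size $\sr$, and set $\BB_{\a+1} = \gen{\BB_\a \cup \U_\a}$. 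Since $\sr \leq \kp$ and the induction has length $\kp$, the final $\BB$ will have size $\leq \kp$. The goal of the bookkeeping is that every countable subset of $\BB$ be contained in some $\A_\a$.

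To verify non-sequentiality of $\mathsf{st}(\BB)$, I would suppose toward contradiction that $(y_n) \to y$ is a non-trivial convergent sequence there; after passing to a subsequence, the $y_n$ may be taken pairwise distinct and all different from $y$. For each $n$ I would choose $b^n \in \BB$ separating $y_n$ from $y$, and, by the bookkeeping, pick $\a$ with $\set{b^n}{n \in \N} \sub \A_\a$. The projected sequence $(y_n \cap \A_\a)$ converges to $y \cap \A_\a$ in $\mathsf{st}(\A_\a)$, with every term different from the limit (as each $b^n \in \A_\a$ separates $y_n$ from $y$), so its image is infinite by the Hausdorff property. Lemma~\ref{lem:noseq}, applied with its ``$\B$'' taken to be $\A_\a$, its ``$\U$'' to be $\U_\a$, its $x_n$ to be $y_n \cap \A_\a$, and its lifts to be $y_n \cap \gen{\A_\a \cup \U_\a}$, would then show that the lifted sequence does not converge in $\mathsf{st}(\gen{\A_\a \cup \U_\a})$. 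However, the continuous projection $\mathsf{st}(\BB) \to \mathsf{st}(\gen{\A_\a \cup \U_\a})$ must carry the convergent $(y_n)$ to a convergent sequence; this contradiction completes the argument.

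The hard part will be the bookkeeping: arranging the enumeration of $\BB$ and the choice of the $S_\a$'s so that every countable subset of $\BB$ is captured as some $S_\a$ at a stage when all of its elements have already entered $\BB_\a$. The cofinal family $\set{C_\a}{\a < \kp}$ provided by $\kp = \mathrm{cof}(\kp^{\aleph_0},\sub)$ is the key resource, but aligning the abstract indices in $\kp$ with the stages of the construction requires care, especially since the hypothesis does not immediately preclude $\cf \kp = \w$ (in which case countable subsets of $\kp$ need not be bounded).
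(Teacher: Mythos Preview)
Your overall strategy—building a subalgebra of $\mathsf{ro}(2^\w)$ via transfinite recursion, invoking Lemma~\ref{lem:noseq} at successor stages, and deriving non-sequentiality by projecting a hypothetical convergent sequence to a countable subalgebra where the lemma applies—matches the paper's. The difference is in how the recursion is organized. You use a chain of length $\kp$, handling one countable set per stage, which forces the bookkeeping you flag as difficult. The paper instead uses a chain $\seq{\A_\a}{\a\leq\w_1}$ of length $\w_1$: at each successor stage $\a+1$ it fixes a $\sub$-cofinal family $\{C_\xi:\xi<\kp\}$ of countable subsets of the current algebra $\A_\a$ (available since $|\A_\a|\leq\kp=\mathrm{cof}(\kp^{\aleph_0},\sub)$), applies Lemma~\ref{lem:noseq} once for each $\xi$ to obtain $\U_\xi$, and sets $\A_{\a+1}=\gen{\A_\a\cup\bigcup_{\xi<\kp}\U_\xi}$, which still has size $\leq\kp$. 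Because $\cf(\w_1)>\w$, any countable subset of $\A_{\w_1}$ already lies in some $\A_\a$ and hence in some $C_\xi$ handled at stage $\a+1$; no bookkeeping is needed. As for your worry about $\cf(\kp)=\w$: this cannot occur, since a short diagonal argument shows that $\kp=\mathrm{cof}(\kp^{\aleph_0},\sub)$ forces $\cf(\kp)>\w$. So your length-$\kp$ approach can be made to work (e.g., by arranging each $C_\b$ to be revisited at cofinally many stages), but the paper's $\w_1$-length version sidesteps the issue entirely.
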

\begin{proof}
To prove the theorem, we construct via recursion an increasing sequence $\seq{\A_\a}{\a \leq \w_1}$ of $\leq\!\kp$-sized subalgebras of $\mathsf{ro}(2^\w)$. The construction will ensure that $\card{\A_{\w_1}} \leq \kp$ and $\mathsf{st}(\A_{\w_1})$ is non-sequential. This suffices to prove the theorem, because the weight of $\mathsf{st}(\A)$ is $|\A|$.

For the base step of the recursion, let $\A_0 = \mathsf{clop}(2^\w)$, and note that $\A_0$ is countable. 
If $\lambda$ is a limit ordinal, then take $\A_\lambda = \gen{ \bigcup_{\a < \lambda}\A_\a }$, noting that if $\card{\A_\a} \leq \kp$ for every $\a < \lambda$ then $\card{\A_\lambda} \leq \kp$ also.

For the successor step, fix $\a < \w_1$ and suppose $\A_\a$ has already been constructed with $\card{\A_\a} \leq \kp$. Let $\set{C_\xi}{\xi \in \kp}$ be a collection of countable subsets of $\A_\a$ such that every countable subset of $\A_\a$ is contained in some $C_\xi$. (Such a family exists because $\kp = \mathrm{cof}(\kp^{\aleph_0},\sub)$.) For each $\xi \in \kp$, let $\B_\xi = \gen{\mathsf{clop}(2^\w) \cup C_\xi}$. Each $\B_\xi$ is a subalgebra of $\mathsf{ro}(2^\w)$ satisfying the hypothesis of Lemma~\ref{lem:noseq}. Applying the lemma, fix for each $\xi \in \kp$ some $\U_\xi$ with $\card{\U_\xi} \leq \sr$ such that $\B_\xi$ and $\U_\xi$ satisfy property $(*)$. Finally, set $\A_{\a+1} = \gen{ \A_\a \cup \bigcup_{\xi \in \kp}\U_\xi }$ and note that $\card{\A_{\a+1}} \leq \kp$.
This completes the recursion. 

Let $\A = \A_{\w_1}$. 
We claim that $\mathsf{st}(\A)$ is non-sequential. Aiming for a contradiction, suppose $[\mathsf{st}(\A)]^{con} \neq \0$. Let $\seq{y_n}{n \in \N} = Y \in [\mathsf{st}(\A)]^{con}$, and let $y \in \mathsf{st}(\A)$ be the limit point of $Y$. For each $n \in \N$, there is some $D_n \in \A$ such that $Y \cap [D_n] = \{y_n\}$. Let $\a < \w_1$ be big enough so that $\set{C_n}{n \in \N} \sub \A_\a$. 

At stage $\a+1$ of our recursion, there was some $\xi \in \kp$ such that $C_\xi \supseteq \set{D_n}{n \in \N}$, and we found a set $\U_\xi$ such that $\B_\xi = \gen{\mathsf{clop}(2^\w) \cup C_\xi}$ and $\U_\xi$ satisfy $(*)$. For each $n \in \N$, let $x_n = y_n \cap \B_\xi$, and note that the $x_n$ are distinct because $\B_\xi \supseteq \set{D_n}{n \in \N}$. Because $Y$ converges to $y$ in $\mathsf{st}(\A_\a)$, we must also have $\set{x_n}{n \in \N}$ converging to $x = y \cap \B_\xi$. Similarly, for each $n \in \N$, let $z_n = y_n \cap \gen{\B_\xi \cup \U_\xi}$; because $Y$ converges to $y$ in $\mathsf{st}(\A)$, we must also have $\set{x_n}{n \in \N}$ converging to $z = y \cap \gen{\B_\xi \cup \U_\xi}$. As $x_n = \B_\xi \cap z_n$ for each $n$, this contradicts $(*)$.
\end{proof}

\section{Lower bounds for $\sr$}\label{sec:lower}

In the previous section, we showed that $\split$ is a lower bound for $\sr$. In this section we prove that $\cov M$ and $\bdd$ are also lower bounds for $\sr$. 

\begin{theorem}\label{thm:covMlb}
$\cov M \leq \sr$.
\end{theorem}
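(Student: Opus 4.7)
The plan is to show the contrapositive form: given any family $\U$ of open subsets of $\R$ with $\card{\U} < \cov M$, exhibit an infinite $A \sub \R$ that is not split by any $U \in \U$. The key observation is that each open set $U \sub \R$ has nowhere dense boundary $\boundary U = \closure{U}\setminus U$, and for any point $x \notin \boundary U$, the open set $U$ is ``trivial'' near $x$: either $x \in U$, in which case $U$ is a neighborhood of $x$, or $x \notin \closure{U}$, in which case $\R \setminus \closure{U}$ is a neighborhood of $x$.

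I would carry this out as follows. First, for each $U \in \U$ the set $\boundary U$ is nowhere dense and hence meager. Since $\card{\U} < \cov M$, the union $\bigcup_{U \in \U} \boundary U$ is a union of fewer than $\cov M$ meager sets and therefore does not cover $\R$. Pick any $x \in \R \setminus \bigcup_{U \in \U} \boundary U$. Next, choose any sequence $\seq{a_n}{n \in \w}$ of pairwise distinct points of $\R \setminus \{x\}$ converging to $x$, and let $A = \set{a_n}{n \in \w}$; this is possible because $\R$ has no isolated points.

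Finally I would verify that no $U \in \U$ splits $A$. Fix $U \in \U$. Since $x \notin \boundary U$ and $\closure{U} = U \sqcup \boundary U$, either $x \in U$ or $x \notin \closure{U}$. In the first case $U$ is an open neighborhood of $x$, so $a_n \in U$ for all but finitely many $n$, and $A \setminus U$ is finite. In the second case $\R \setminus \closure{U}$ is an open neighborhood of $x$, so $a_n \notin \closure{U} \supseteq U$ for all but finitely many $n$, and $A \cap U$ is finite. Either way, $U$ does not split $A$. Thus $\U$ witnesses nothing, so $\sr \geq \cov M$.

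There is not really a major obstacle here; the content of the argument is just the topological fact that a convergent sequence with limit avoiding $\boundary U$ cannot be split by the open set $U$, combined with the trivial cardinal arithmetic that fewer than $\cov M$ nowhere dense sets cannot cover $\R$. The only minor subtlety is remembering to choose $A$ so that $x \notin A$, which is automatic in a dense-in-itself space like $\R$.
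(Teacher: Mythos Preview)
Your proof is correct and is essentially the contrapositive of the paper's argument: the paper fixes for each $x\in\R$ a sequence $S_x$ converging to $x$, observes that any $U$ splitting $S_x$ must have $x\in\boundary U$, and concludes that a splitting family $\U$ satisfies $\R=\bigcup_{U\in\U}\boundary U$, whence $|\U|\geq\cov M$. The key idea---that an open set can split a convergent sequence only if its (nowhere dense) boundary contains the limit---is identical in both presentations.
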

\begin{proof}
For each $x \in \R$, fix some $S_x \in [\R]^{con}$ converging to $x$. It is not difficult to see that if $U$ is an open subset of $\R$ that splits $S_x$, then $x \in \boundary U$. Now suppose $\U$ is a family of open subsets of $\R$ having the property stated in Definition~\ref{def:sr}. For each $x \in \R$, the set $S_x$ is split by some $U \in \U$, so
$$\R \,=\, \textstyle \bigcup_{U \in \U} \boundary U.$$
But $\boundary U$ is closed and nowhere dense for each open $U \sub \R$, so this shows $\cov M \leq |\U|$. As this is true for every $\U$ having the property stated in Definition~\ref{def:sr}, it follows that $\cov M \leq \sr$.
\end{proof}

In the Cohen model $\split < \cov M$. It follows from this and Theorem~\ref{thm:covMlb} that $\split < \sr$ in the Cohen model. Thus the bound proved in Lemma~\ref{lem:splitlb} is consistently strict. This also shows that the word ``uncountable'' cannot be removed from the hypothesis of Theorem~\ref{thm:Polish}, because $\N$ is a Polish space and $\split(\N) = \split$ is consistently less than $\sr$.

Likewise, $\cov M < \split$ in the Mathias model, and it follows from this and Lemma~\ref{lem:splitlb} that $\cov M < \sr$ in the Mathias model. Thus the bound proved in Theorem~\ref{thm:covMlb} is consistently strict.

Recall that the Cantor space has a basis consisting of sets of the form
$$[s] = \set{x \in 2^\w}{x \rest \mathrm{dom}(s) = s}$$
where $s \in 2^{<\w}$. Here, as usual, $2^{<\w}$ denotes the set of all functions from some finite ordinal $n = \{0,1,\dots,n-1\}$ to $2 = \{0,1\}$.

\begin{theorem}\label{thm:blb}
$\bdd \leq \sr$.
\end{theorem}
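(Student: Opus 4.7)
Given a family $\U$ of open subsets of $2^\w$ with $|\U| < \bdd$, the plan is to construct an infinite $A \sub 2^\w$ not split by any $U \in \U$. By the characterization $\sr = \split^{con}(2^\w)$ established just above, it suffices to produce a non-trivial convergent sequence with this property; I would target a sequence of the form $x_n \to \bar{0}$ with each $x_n \in [0^n 1]$.

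For each $U \in \U$, define $f_U \in \w^\w$ by letting $f_U(n)$ be the least $m$ such that some $s \in 2^m$ extending $0^n 1$ satisfies either $[s] \sub U$ or $[s] \cap U = \0$. A key preliminary observation is that $f_U(n)$ is always finite: if every extension of $0^n 1$ failed both conditions, then both $U$ and $2^\w \setminus U$ would be dense in $[0^n 1]$; but $2^\w \setminus U$ is closed, so a dense closed subset of $[0^n 1]$ equals $[0^n 1]$, contradicting $U \cap [0^n 1] \neq \0$. Since $|\U| < \bdd$, there is some $g \in \w^\w$ dominating the family $\{f_U : U \in \U\}$.

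The construction then proceeds stagewise: at stage $n$, I would pick a string $t_n \in 2^{g(n)}$ extending $0^n 1$ such that for every $U$ with $f_U(n) \leq g(n)$, $t_n$ extends some witness $s_U^n$ of length at most $g(n)$ with $[s_U^n] \sub U$ or $[s_U^n] \cap U = \0$; then let $x_n$ be any point of $[t_n]$. For each such active $U$, the status of $x_n$ relative to $U$ is then uniformly determined by whether $[s_U^n] \sub U$ or $[s_U^n] \cap U = \0$.

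The principal obstacle is selecting the $t_n$'s so that $\{n : x_n \in U\}$ turns out finite or cofinite for every $U \in \U$. My plan is to assign each $U$ a ``direction'' $d_U \in \{0,1\}$ in advance---with the natural choice dictated by whether $\bar{0}$ lies in $\mathrm{int}(\closure{U})$ and whether $\bar{0}$ is isolated in $2^\w \setminus U$---and then argue via tree combinatorics on $2^{<\w}$ that at each stage a common $t_n$ can be chosen consistent with $d_U$ for every active $U$, exploiting that the sets $\{s : [s] \sub U\}$ and $\{s : [s] \cap U = \0\}$ are both upward-closed. The most delicate case is when $\bar{0} \in \mathrm{int}(\closure{U}) \setminus U$, since then $d_U = 1$ is structurally forced (so $x_n \in U$ cofinitely and $U$ trivially fails to split the sequence); harmonizing this forced direction with the free choices for other $U$'s across all stages is where I expect the bulk of the technical work to lie.
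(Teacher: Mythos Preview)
Your setup has a fatal flaw: the choice to place $x_n \in [0^n 1]$ makes the sequence vulnerable to a single open set regardless of any later choices. Take $U = \bigcup_{k \in \w}[0^{2k}1]$. The cylinders $[0^m 1]$ are pairwise disjoint, so $x_n \in U$ if and only if $n$ is even; this $U$ splits every sequence of the shape you are targeting. More generally, once you commit to placing the $x_n$'s in pairwise disjoint clopen boxes, an adversary can prescribe the in/out pattern arbitrarily with a single open set, and no amount of ``tree combinatorics'' at level $g(n)$ can repair this. The difficulty you flag at the end---harmonizing directions $d_U$ across all $U$ simultaneously---is not merely delicate; it is impossible in this framework, because two open sets can force contradictory directions on the same cylinder $[0^n 1]$.

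The paper's argument avoids this by abandoning the disjoint-cylinder picture entirely. Instead it builds points $s_n\cat 0^\infty$ that are \emph{nested}: one chooses $k_0 < k_1 < \dots$ with $k_{n+1} = k_n + 1 + f(k_n)$ (where $f$ dominates the $f_U$'s), and lets $s_n$ have $1$'s exactly at $k_0,\dots,k_n$. The crucial feature is that for every $m \geq n$, the point $s_m\cat 0^\infty$ lies in $[s_n\cat 0^{f(k_n)}]$. The function $f_U$ is defined so that $f_U(k_n) < f(k_n)$ and $s_n\cat 0^\infty \in U$ together imply $[s_n\cat 0^{f(k_n)}] \sub U$; hence once $U$ captures one $s_n\cat 0^\infty$ (with $n$ large enough), it captures all later ones. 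No direction needs to be chosen in advance, and no simultaneous consistency problem arises: the nesting forces the set $\{n : s_n\cat 0^\infty \in U\}$ to be cofinite whenever it is infinite.
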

\begin{proof}
Suppose $\U$ is a collection of open subsets of the Cantor space $2^\w$ such that $|\U| < \bdd$. To prove the theorem, it suffices to show that there is some infinite $X \sub 2^\w$ that is not split by any $U \in \U$.

For each $s \in 2^{<\w}$, let $s \cat 0^\infty$ denote the member of $2^\w$ defined by setting
$(s \cat 0^\infty)(n) = s(n)$ if $n \in \mathrm{dom}(s)$, and $(s \cat 0^\infty)(n) = 0$ for all $n \geq \mathrm{dom}(s)$.
Likewise, define $s \cat 0^k = (s \cat 0^\infty) \rest (\mathrm{dom}(s)+k)$ for each $k \in \w$; i.e., $s \cat 0^k$ is the finite sequence obtained by appending $k$ zeroes to $s$.

Observe that if $s \in 2^{<\w}$, then the sets of the form $[s \cat 0^k]$ form a neighborhood basis for the point $s \cat 0^\infty$ in $2^\w$.
In particular, if $s \cat 0^\infty \in U \sub 2^\w$ and $U$ is open, then there must be some $k \geq 0$ such that $[s \cat 0^k] \sub U$. 

For each $U \in \U$, define a function $f_U: \w \to \w$ by setting
$$f_U(n) =  \max\set{\min\set{k}{[s \cat 0^k] \sub U}}{s \in 2^{n+1} \, \text{ and } s \cat 0^\infty \in U}$$
for each $n \in \w$. (This function is well-defined by the previous paragraph.)
Equivalently, $f_U(n)$ is the smallest $k$ such that for any $s \in 2^{n+1}$, if $s \cat 0^\infty \in U$ then $[s \cat 0^k] \sub U$.

As $|\U| < \bdd$, there is some $f \in \w^\w$ such that $f_U <^* f$ for every $U \in \U$ (where, as usual, $f_U <^* f$ means that $f_U(n) < f(n)$ for all but finitely many $n \in \w$).
Using recursion, define an infinite (strictly increasing) sequence $\seq{k_n}{n < \w}$ of natural numbers as follows. Let $k_0 = 0$, and for $n \geq 0$ let 
$$k_{n+1} = k_n + 1 + f(k_n).$$

We now define an infinite subset of $2^\w$ that (we claim) is not split by any $U \in \U$.
Given $n \in \w$, define $s_n \in 2^{<\w}$ as follows: $\mathrm{dom}(s_n) = k_n+1$ and
$$s_n(i) = 
\begin{cases}
1 & \text{ if } i = k_m \text{ for some } m, \\
0 & \text{ if not.}
\end{cases}$$
Informally, $s_n$ is a finite sequence of $0$'s and $1$'s, with exactly $n+1$ terms equal to $1$, and where these $1$'s appear sparsely, separated by increasingly enormous strings of $0$'s. More specifically, if the $k^{\mathrm{th}}$ value of the sequence is $1$, then it is followed by a length-$f(k)$ string of $0$'s before the next $1$ appears in the sequence.
\begin{align*}
s_0 & = \ 1 \\ 
s_1 & = \ \underbrace{1 \overbrace{0\,0\,\dots \,0\,0}^{f(0)}}_{k_1} 1 \\
s_2 & = \ \underbrace{\overbrace{1 \,0\,0\,\dots \,0\,0}^{k_1} 1 \overbrace{0\,0\,\dots\dots\,0\,0}^{f(k_1)}}_{k_2} 1 \\
s_3 & = \ \underbrace{\overbrace{1 \,0\,0\,\dots \,0\,0 \,1\, 0\,0\,\dots\dots\,0\,0}^{k_2} 1 \overbrace{0\,0\,\dots\dots\dots\dots\,0\,0}^{f(k_2)}}_{k_3} 1 \\
& \quad \tiny \vdots \\
s_n & = \ 1 \overbrace{0\,0\,\dots \,0\,0}^{f(0)} 1 \overbrace{0\,0\,\dots.\,\,0\,0}^{f(k_1)} 1 \overbrace{0\,0\,\dots.\,.\,\,0\,0}^{f(k_2)} 1\, 0\,0\, \dots\, 0\,0\, 1 \overbrace{0\,0\,\dots\dots\,0\,0}^{f(k_n)} 1 
\end{align*}
We claim that the set $X = \set{s_n \cat 0^\infty}{n \in \w}$ is not split by any $U \in \U$.

To see this, let us suppose that $U \in \U$ contains $s_n \cat 0^\infty$ for infinitely many $n \in \w$. Recall that the sequence $\seq{k_n}{n \in \w}$ is strictly increasing (and in particular, it contains infinitely many numbers); as $f_U <^* f$, this implies that $f_U(k_n) < f(k_n)$ for all but finitely many $n$. Thus there is some $n \in \w$ such that $s_n \cat 0^\infty \in U$ and $f_U(k_n) < f(k_n)$.  

Now $\mathrm{dom}(s_n) = k_n+1$, so by the definition of $f_U$, $s_n \cat 0^\infty \in U$ implies that $[s_n \cat 0^{f_U(k_n)}] \sub U$. As $f(k_n) > f_U(k_n)$, we also have $[s_n \cat 0^{f(k_n)}] \sub [x_n \cat 0^{f_U(n)}]$. Thus $[s_n \cat 0^{f(k_n)}] \sub U$.

But for all $m \geq n$, $s_n \cat 0^{f(k_n)}$ is an initial segment of $s_m$. This implies that $s_m \cat 0^\infty \in [s_n \cat 0^{f(k_n)}] \sub U$ for all $m \geq n$. 

Thus if $U \in \U$ contains infinitely many points of $X$, then it contains co-finitely many points of $X$. Hence $X$ is not split by any $U \in \U$.
\end{proof}

In the Cohen model, $\bdd < \cov M$. It follows from this and Theorem~\ref{thm:covMlb} that $\bdd < \sr$ in the Cohen model. Thus the bound proved in Theorem~\ref{thm:blb} is consistently strict.

Unlike $\split$ and $\cov M$, which are lower bounds for $\noseq$ as well as for $\sr$, $\ \ \ \bdd$ is not a lower bound for $\noseq$. This follows from a result of the second author (to appear in a forthcoming paper) which shows that $\noseq = \aleph_1$ in the Laver model; as $\bdd = \aleph_2$ in the Laver model, this shows the consistency of $\noseq < \bdd$. It also shows that $\noseq < \sr$ in the Laver model, which is interesting in light of the corollary to Theorem~\ref{thm:noseq} mentioned in the introduction.

\section{An upper bound for $\sr$}\label{sec:upper}

In this section we prove our only nontrivial upper bound for $\sr$, namely $\sr \leq \max\{ \bdd, \non N \}$.
Recall that \non N denotes the smallest size of a non-Lebesuge-measurable subset of $2^\w$. (The value of \non N remains the same if instead of $2^\w$ we were to use the Lebesgue measure on $\R$ or $[0,1]$, or any other standard measure on a Polish space; see \cite[Theorem 17.41]{Kechris}). Recall also that if $X \sub 2^\w$ then the \emph{outer measure} of $X$ is
$$\mu^*(X) = \inf \set{ \mu(Y) }{ Y \sub 2^\w \text{ is measurable and } X \sub Y }$$
(see \cite[Chapter 3]{Oxtoby} or \cite[Section 17.A]{Kechris}). In particular, if $X \sub 2^\w$ has outer measure $1$, then it has non-empty intersection with every measurable, non-null subset of $2^\w$.

\begin{lemma}\label{lem:outermeasure1}
\non N is the smallest cardinality of a subset of $2^\w$ with outer measure $1$.
\end{lemma}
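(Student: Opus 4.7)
The plan is to prove the two inequalities between $\non N$ and the cardinal $\lambda$ defined as the smallest cardinality of a subset of $2^\w$ with outer measure $1$.

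For the easier direction $\lambda \geq \non N$: any $X \sub 2^\w$ with $\mu^*(X) = 1$ is in particular non-null. If $|X| < \continuum$, then $X$ cannot be a positive-measure measurable set, since any such set contains a perfect subset and hence has size $\continuum$; so $X$ must be non-measurable, giving $|X| \geq \non N$. If $|X| = \continuum$ the inequality is trivial.

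The nontrivial direction $\lambda \leq \non N$ proceeds as follows. Pick a non-measurable $X \sub 2^\w$ with $|X| = \non N$, so $\mu^*(X) > 0$. I would fix a measurable hull $H \supseteq X$ (a measurable $H$ with $\mu(H) = \mu^*(X)$) and apply the Lebesgue density theorem to $H$ to produce, for each $n \in \w$, a basic clopen $[s_n]$ with $s_n \in 2^{<\w}$ satisfying $\mu(H \cap [s_n]) > (1 - 1/n)\mu([s_n])$. A short computation using that $H$ is a hull of $X$ yields $\mu^*(X \cap [s_n]) = \mu(H \cap [s_n])$. Next, I would exploit the compact group structure on $2^\w$: the $2^{|s_n|}$ translates $[s_n] + (t \cat 0^\infty)$ for $t \in 2^{|s_n|}$ run through the $2^{|s_n|}$ basic clopens of length $|s_n|$, which partition $2^\w$. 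Translating $X \cap [s_n]$ through all these translates and taking the union produces a set $X_n$ of cardinality at most $|X|$, with outer measure $2^{|s_n|} \cdot \mu^*(X \cap [s_n]) > 1 - 1/n$, using translation-invariance of Haar measure together with the additivity of outer measure across disjoint measurable cells.

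Finally, $X' = \bigcup_{n \in \w} X_n$ has cardinality at most $\aleph_0 \cdot |X| = \non N$ and $\mu^*(X') \geq \sup_n (1 - 1/n) = 1$, completing the proof. The main technical obstacle is pairing cardinality control with measure gain: the hull identity $\mu^*(X \cap [s]) = \mu(H \cap [s])$ and the exact (not merely sub-) additivity of outer measure across the partition $\{[t] : t \in 2^{|s_n|}\}$ are both elementary consequences of the definition of outer measure, but they are the key facts that allow one to boost a set from positive outer measure to outer measure $1$ while keeping the cardinality at $\non N$.
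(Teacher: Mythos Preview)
Your argument is correct, but it takes a different route from the paper's. The paper closes a non-measurable set $X$ of size $\non N$ under the equivalence relation $=^*$ (eventual equality) to obtain a set $Y$ of the same cardinality, and then invokes Kolmogorov's $0$-$1$ law: any measurable set $B$ disjoint from $Y$ has a $=^*$-saturation $C$ that is still disjoint from $Y$, and since $C$ is a tail set with $\mu(C) \geq \mu(B)$, the $0$-$1$ law forces $\mu(B) = 0$. This gives $\mu^*(Y) = 1$ in a single stroke.

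Your approach instead combines the Lebesgue density theorem with the group structure of $2^\w$: find cylinders where a measurable hull of $X$ has density close to $1$, translate $X$ restricted to such a cylinder across all cosets to produce sets $X_n$ of outer measure $>1-\nicefrac{1}{n}$, and take the union. This is longer but entirely elementary; it avoids the $0$-$1$ law and would adapt to any translation-invariant Borel probability measure on a Polish group satisfying a density theorem. The paper's argument is shorter and exploits the specific product structure of $2^\w$, while yours is more hands-on and arguably more portable. Both establish only the direction $\lambda \leq \non N$ nontrivially; your treatment of the reverse inequality via the perfect-set property of positive-measure Borel sets is fine (and indeed the paper leaves that direction implicit).
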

\begin{proof}
Let $X \sub 2^\w$ be a non-measurable set with $|X| = \non N$, and let
$$Y = \set{y \in 2^\w}{ \text{there is some } x \in X \text{ such that } x =^* y}.$$
where, as usual, $x =^* y$ means that $x(n) = y(n)$ for all but finitely many $n \in \w$. It is clear that $|Y| = |X| \cdot \aleph_0 = |X|$, and we claim that $\mu^*(Y) = 1$. To see this, suppose $B \sub 2^\w \setminus Y$ is measurable and $\mu(B) > 0$. Let
$$C = \set{c \in 2^\w}{ \text{there is some } b \in B \text{ such that } b =^* c}$$
and note that because $B \cap Y = \0$, we must also have $C \cap Y = \0$. By Kolmogorov's $0$-$1$ Law (also known as the $0$-$1$ Law for Lebesgue measure; see \cite[Exercise 17.1]{Kechris}), because $C$ is closed under the equivalence relation $=^*$, either $\mu(C) = 0$ or $\mu(C) = 1$. The former is impossible because $C \supseteq B$ and $\mu(B) > 0$, so $\mu(C) = 1$. As $Y \cap C = \0$, this implies $Y$ is null, a contradiction.
Thus if $B \sub 2^\w$ is measurable and $Y \sub 2^\w \setminus B$, then $B$ is null. It follows that $\mu^*(Y) = 1$.
\end{proof}

\begin{theorem}\label{thm:upperbound}
$\sr \leq \max\{ \bdd, \non N \}$.
\end{theorem}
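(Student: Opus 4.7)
The plan is to construct an explicit splitting family $\U$ of open subsets of $2^\w$ of cardinality at most $\kp := \max\{\bdd, \non N\}$. By the theorem in Section~\ref{sec:topology} asserting $\split^{con}(2^\w) = \sr$, it suffices to ensure that every non-trivial convergent sequence in $2^\w$ is split by some $U \in \U$. First I would fix an unbounded family $F \sub \w^\w$ of strictly increasing functions with $|F| = \bdd$ and, using Lemma~\ref{lem:outermeasure1}, a set $Y \sub 2^\w$ of outer Haar measure $1$ with $|Y| = \non N$. For each $(y,f) \in Y \times F$, I would define an open set $U_{y,f} \sub 2^\w$ built from the block partition $I_k^f := [f(k), f(k+1))$ of $\w$ and the restrictions $y \rest I_k^f$, arranged so that the complement of $U_{y,f}$ is a closed set of positive Haar measure (so $U_{y,f}$ is dense open with a fat boundary) and so that for any non-trivial convergent sequence $\s$ the Borel set $B(\s,f) := \{y \in 2^\w : U_{y,f} \text{ does not split } \s\}$ has Lebesgue measure strictly less than $1$ once $f$ is chosen large enough relative to $\s$.

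The verification then proceeds as follows. Given $\s = (a_m)_{m \in \w} \to x$ with $a_m \neq x$, pass to a subsequence so that the first-disagreement function $n_m := \min\{n : a_m(n) \neq x(n)\}$ is strictly increasing. Using the unboundedness of $F$, pick $f \in F$ whose blocks $I_k^f$ grow past $\{n_m\}$ quickly enough that the measure estimate $\mu(B(\s,f)) < 1$ holds by a Borel--Cantelli / block-independence argument. Since $\mu^*(Y) = 1$, $Y$ cannot be contained in $B(\s,f)$, so some $y \in Y$ makes $U_{y,f}$ a splitter for $\s$. The family $\U = \{U_{y,f} : (y,f) \in Y \times F\}$ then has $|\U| \leq \non N \cdot \bdd = \kp$, proving $\sr \leq \kp$.

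The principal obstacle is choosing $U_{y,f}$ so that the inequality $\mu(B(\s,f)) < 1$ can be forced uniformly in $\s$. The most obvious candidate $U_{y,f} = \bigcup_k [y \rest I_k^f]$ (points agreeing with $y$ on some full block) is too one-sided: for sparse sequences of the form ``$a_m = x$ except for a single flipped bit at $n_m$'', the event $\{a_m \in U_{y,f}\}$ under Haar-random $y$ has probability $2^{-|I_k^f|}$ on the block containing $n_m$ and is typically never realized, so $\mu(B(\s,f))$ can equal $1$. I expect the remedy is to use a finer test per block whose occupancy under random $y$ is bounded away from both $0$ and $1$ (for instance, a single distinguished-coordinate agreement $a(p_k^f) = y(p_k^f)$ for a chosen $p_k^f \in I_k^f$, so that $\Pr[a_m \in U_{y,f}]$ is close to $1/2$); the unboundedness of $F$ would then be used to guarantee that the test positions straddle $\{n_m\}$ infinitely often on both sides, turning the measure bound on $B(\s,f)$ into a genuine two-sided splitting statement.
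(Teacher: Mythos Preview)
Your high-level framework is exactly right and matches the paper: parametrize a family $\U$ of open sets by (an unbounded family of size $\bdd$) $\times$ (a set of outer measure $1$ of size $\non N$), then for a given convergent sequence choose $f$ fast enough and use the outer-measure-$1$ condition to find a splitter. You also correctly identify that the naive block-agreement set $\bigcup_k[y\rest I_k^f]$ fails and that the real content of the proof is the construction of $U_{y,f}$.

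However, the proof as written has a genuine gap: you do not actually construct $U_{y,f}$, and the ``single distinguished-coordinate'' suggestion does not work. The difficulty is a tension you have not resolved. Any open $U$ that splits a sequence converging to $x$ must satisfy $x\notin U$; so you need $\Pr_y[x\notin U_{y,f}]>0$ uniformly in $x$. A coordinate test of the kind you describe either gives an open set with $\Pr_y[x\notin U_{y,f}]=0$ (if the union of tested cylinders is too generous), or, if you thin it enough to keep that probability positive, the event ``$a_m\in U_{y,f}$'' for terms $a_m$ very close to $x$ becomes so rare that a Borel--Cantelli argument goes the wrong way and you cannot force infinitely many $a_m$ into $U_{y,f}$. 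Your sentence ``$\Pr[a_m\in U_{y,f}]$ is close to $1/2$'' cannot hold simultaneously with ``$\Pr[x\in U_{y,f}]$ is bounded away from $1$'' for $a_m$ arbitrarily close to $x$, unless the construction is much more delicate than a single bit per block.

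The paper's construction is substantially more elaborate than anything you propose. The randomness parameter $z$ lives not in $2^\w$ but in a custom probability space $X_f$: a point $z\in X_f$ records, for each $k$, a choice of exactly a $2^{-(k+2)}$-fraction of the nodes at level $f(k)$ of $2^{<\w}$ (avoiding descendants of previously chosen nodes), and $U^f_z$ is the union of the corresponding basic clopen sets. The geometrically decreasing fractions are the key: they force $\Pr[z\text{ selects }x]=\tfrac12$ exactly, and they yield two quantitative estimates (the paper's Claim~2(2) and~2(3)) which say, roughly, that conditioned on not selecting the limit $x$, nearby points are selected with very small probability (giving infinitely many terms \emph{outside} $U^f_z$), while among any $N$ points distinguished by level $f(k)$ the chance of selecting none is at most $e^{-N/2^{k+2}}$ (giving, once $f$ grows fast enough, infinitely many terms \emph{inside} $U^f_z$). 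The role of $\bdd$ is precisely to guarantee this growth condition can be met. None of this combinatorics is visible in your sketch; the ``remedy'' you expect is in fact the entire proof.
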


\begin{proof}
To prove the theorem, we will show that there is a family $\U$ of open subsets of the Cantor space $2^\w$, with $|\U| = \max\{\bdd,\non N\}$, such that every $X \in [2^\w]^{con}$ is split by some $U \in \U$.

Let us begin with a brief sketch of the proof, and in particular of how the members of $\U$ are obtained. Given some function $f: \w \to \w$, consider the following randomized, $\w$-step algorithm for obtaining an open subset of $2^\w$. On step $0$, randomly select $\frac{1}{4}$ of the nodes from level $f(0)$ of the tree $2^{<\w}$ (such that every node is equally likely to be selected); on step $1$, randomly select $\frac{1}{8}$ of the nodes from level $f(1)$ that do not extend an already-chosen node; on step $2$, randomly select $\frac{1}{16}$ of the nodes from level $f(2)$ that do not extend an already-chosen node; etc. Once a set $S \sub 2^{<\w}$ is selected in this way, define $U = \bigcup_{s \in S}[s]$. This $U$ is a ``random'' open subset of $2^\w$ with measure $\frac{1}{4}+\frac{1}{8}+\dots = \frac{1}{2}$. We shall prove that if $f: \w \to \w$ grows fast enough, then a set $U$ chosen in this way will split a given $X \in [2^\w]^{con}$ with probability $\frac{1}{2}$. Roughly, we need only $\bdd$ functions to make sure that one of them always grows ``fast enough'', and only \non N sequences of selections to ensure that one of them is always ``random enough'' for this idea to work. By fixing these $\bdd$ functions and $\non N$ sequences of selections beforehand, we obtain the desired family $\U$ of open sets.

To begin, we describe a way of associating to every increasing function $f: \w \to \w$ a Polish space $X_f$ and a probability measure $\mu_f$ on $X_f$. Roughly, the idea behind the definition of $X_f$ and $\mu_f$ is that a point of $X_f$ selected randomly with respect to $\mu_f$ corresponds to a ``random'' selection of nodes in $2^{<\w}$ as described in the previous paragraph.

Given $n \in \w$, let $2^{[n]}$ denote the set of functions $\{0,1,\dots,n-1\} \to \{0,1\}$. (This set is often denoted $2^n$, but we adopt a different notation here to distinguish between sets of sequences $2^{[n]}$ and natural numbers of the form $2^n$, both of which appear frequently in the following argument.) Fix an increasing function $f: \w \to \w$ with $f(0) \geq 2$, and for each $n \in \w$ define 
$$\textstyle D^f_n = \set{A \sub 2^{[f(n)]}}{|A| = \frac{1}{2^{n+2}}\card{2^{[f(n)]}} = 2^{f(n) - n - 2}}.$$
In other words, each $A \in D^f_n$ represents a possible outcome of the $n^{\mathrm{th}}$ stage of the randomized selection process described above. The requirement that $f$ be increasing with $f(0) \geq 2$ simply ensures that $|A| = 2^{f(n) - n - 2}$ is always an integer.

Let us consider each $D^f_n$ as a (finite) topological space, having the discrete topology. The space $X_f$ is defined as a subspace of $\prod_{n \in \w}D^f_n$ (which is given the usual product topology) as follows:
\begin{align*}
X_f \,=\, \textstyle \left\{ z \in \prod_{n \in \w}D^f_n \right.\,:\ & \text{if } m < n \text{ then no member of } z(n)  \\ 
& \left. \quad \ \text{ extends any member of } z(m) \vphantom{\textstyle \prod_{n < \w} D^f_n} \right\}.
\end{align*}

We claim that $X_f$ is a Polish space. To see this, first observe that $\prod_{n \in \w}D^f_n$ is a Polish space; in fact, it is homeomorphic to the Cantor space, because it is a countably infinite product of discrete spaces each having more than one point. 
The space $\prod_{n \in \w}D^f_n$ has a basis consisting of sets of the form
$$\textstyle [\![\z]\!] = \set{z \in \prod_{n \in \w}D^f_n\,}{z \rest n = \z}$$
where $\z$ is a function such that $\mathrm{dom}(\z) \in \w$ and $\z(i) \in D^f_i$ for all $i \in \mathrm{dom}(\z)$.
Next, observe that for every pair of natural numbers $m < n$,
\begin{align*}
U_{m,n} \,=\,  & \textstyle \left\{ z \in \prod_{n \in \w}D^f_n \right.\,:\ \text{no member of } z(n)  \\ 
& \left. \qquad \quad \text{ extends any member of } z(m) \vphantom{\textstyle \prod_{n < \w} D^f_n} \right\}
\end{align*}
is open in $\prod_{n \in \w}D^f_n$. (It is a union of some subset of the finitely many basic open sets $[\![\z]\!]$ having $\mathrm{dom}(\z) = n+1$.) Thus $X_f = \bigcap_{m < n < \w}U_{m,n}$ is a $G_\dlt$ subspace of a Polish space, and therefore is itself Polish, by Alexandroff's theorem \cite[Theorem 3.11]{Kechris}.

We call a function $\s$ \emph{admissible} if $\z$ is a function on some ordinal $n$ such that $\z(i) \in D^f_i$ for all $i < n$, and if $i < j < n$ then $\z(j)$ does not contain an extension of any member of $\z(i)$. In other words, $\z$ is admissible if $\z = z \rest n$ for some $z \in X_f$ and $n \in \w$.

\begin{claim1}
If $\z,\xi$ are admissible and $\mathrm{dom}(\z) = \mathrm{dom}(\xi) = n$, then $\z$ and $\xi$ have the same number of admissible extensions with domain $n+1$.
\end{claim1}
\begin{proof}[Proof of claim]
Given an admissible function $\zeta$ and some $k \geq \mathrm{dom}(\zeta)$, we say that $s \in 2^{[f(k)]}$ is \emph{$\zeta$-available} if $s$ does not extend any member of $\zeta(i)$ for any $i < \mathrm{dom}(\zeta)$.

We prove, by induction on $n$ (for all $k$ simultaneously), that
\begin{itemize}
\item[$(*)$] if $\mathrm{dom}(\zeta) = n \leq k$, then the number of $\zeta$-available vertices in $2^{[f(k)]}$ is exactly $\left( \frac{1}{2} + \frac{1}{2^{n+1}} \right) 2^{f(k)}$. 
\end{itemize} 

The base case $n=0$ is trivially true, because if $\mathrm{dom}(\z) = 0$ then every vertex in $2^{[f(k)]}$ is $\zeta$-available, and this is exactly $2^{f(k)} = \left( \frac{1}{2} + \frac{1}{2^{0+1}} \right) 2^{f(k)}$ vertices. Suppose the claim is true for some $m$. Let $\zeta$ be an admissible function with domain $m+1$, and suppose $k \geq m+1$. Note that each of the $\frac{1}{2^{m+2}}2^{f(m)}$ vertices in $\zeta(m)$ has exactly $2^{f(k) - f(m)}$ extensions in $2^{[f(k)]}$, and that two different vertices from $\zeta(m)$ have no common extensions in $2^{[f(k)]}$. 
The $\zeta$-available members of $2^{[f(k)]}$ are exactly the $(\zeta \rest m)$-available vertices minus the extensions of members of $\zeta(m)$.
Thus there are exactly
\begin{align*}
& \left( \frac{1}{2} + \frac{1}{2^{m+1}} \right) 2^{f(k)} - 2^{f(k) - f(m)}|\zeta(m)| \\
& =\, \left( \frac{1}{2} + \frac{1}{2^{m+1}} \right)2^{f(k)} - 2^{f(k) - f(m)}\frac{1}{2^{m+2}}2^{f(m)} \\
& =\, \left( \frac{1}{2} + \frac{1}{2^{m+1}} \right)2^{f(k)} - \frac{1}{2^{m+2}}2^{f(k)} \\
& =\, \left( \frac{1}{2} + \frac{1}{2^{m+2}} \right)2^{f(k)}
\end{align*}
$\zeta$-available vertices in $2^{[f(k)]}$. By induction, $(*)$ holds for all $n \leq k < \w$.

Let us now show that $(*)$ implies the claim.
Let $\zeta$ be an admissible function with domain $n$.
If $M = \left( \frac{1}{2} + \frac{1}{2^{n+1}} \right) 2^{f(n)}$ (the number of $\zeta$-available vertices in $2^{[f(n+1)]}$) and if $N = \frac{1}{2^{n+2}}2^{f(n)}$ (the number of vertices in any member of $D^f_n$) then $\zeta$ has exactly $M \choose N$ admissible extensions $\zeta'$ with domain $n+1$, because $\zeta'(n)$ can be any $N$-sized subset of the $M$ $\zeta$-available vertices in $2^{[f(n+1)]}$. This number depends only on $n$, so this proves the claim.
\end{proof}

If $\z$ is an admissible function, then define
$$\mu_f([\![\z]\!]) = \frac{1}{\card{\set{\xi \,}{\, \xi \text{ is admissible and } \mathrm{dom}(\xi) = \mathrm{dom}(\z)}}}.$$
Using the previous claim, one can show that this function is finitely additive: i.e., if $[\![\z_1]\!], \dots, [\![\z_n]\!]$ are disjoint sets with $\bigcup_{i \leq n}[\![\z_i]\!] = [\![\z]\!]$, then $\mu_f([\![\z]\!]) = \sum_{i \leq n}\mu_f([\![\z_i]\!])$. Thus the equation above defines a measure $\mu_f$ on the semi-ring of all sets of the form $[\![\z]\!]$, where $\z$ is an admissible function. As this semi-ring of sets is a basis for $X_f$, Carath\'eodory's Extension Theorem asserts that $\mu_f$ extends uniquely to a $\s$-additive measure on all Borel subsets of $X_f$.

Note that the empty function $\0$ is admissible, and is the only admissible function with domain $0$. It follows that that $\mu_f(X_f) = \mu_f([\![\0]\!]) = \frac{1}{1} = 1$. Thus $\mu_f$ is a probability measure on $X_f$.

For any well-formed formulas of first-order logic $\varphi(z)$, and $\psi(z)$, define
\begin{align*}
P(\varphi(z)) & \,=\, \mu_f(\set{z \in X_f}{\varphi(z)}), \vphantom{\displaystyle \sum} \\
P(\varphi(z) \,\mid\, \psi(z)) & \,=\, \frac{P(\varphi(z) \wedge \psi(z))}{P(\psi(z))}
\end{align*}
assuming that all these sets are measurable and, in the second equation, that $P(\psi(z)) \neq 0$.
In other words, if $z$ is chosen from $X_f$ randomly with respect to the probability measure $\mu_f$, then $P(\varphi(z))$ is the probability that $z$ satisfies $\varphi(z)$, and $P(\varphi(z) \mid \psi(z))$ is the probability that $z$ satisfies $\varphi(z)$, given that it satisfies $\psi(z)$.

Given $f \in \B$ and $z \in X_f$, define $U^f_{z,n} = \bigcup \set{[s] \sub 2^\w}{s \in z(n)}$ for each $n \in \w$, and define
$$\textstyle U^f_z \,=\, \bigcup_{n \in \w}U^f_{z,n} \,=\, \bigcup \set{[s] \sub 2^\w}{s \in z(n) \text{ for some } n}.$$
(In other words, if $z$ is a ``random'' point of $X_f$, then $U^z$ is the result of the randomized selection process described at the beginning of the proof.)
In keeping with our intended intuition for $X_f$, let us say that a point $z \in X_f$ \emph{selects} a point $x \in 2^\w$ if $x \in U^f_{z}$. We say that $z$ selects $x$ \emph{before stage} $n$ if $x \in \bigcup_{i < n}U^f_{z,i}$; we say that $z$ selects $x$ \emph{at stage} $n$ if $x \in U^f_{z,n}$, and we say that $z$ selects $x$ \emph{after stage} $n$ if $x \in \bigcup_{i > n}U^f_{z,i}$. 

Given two distinct points $x,y \in 2^\w$, let
$$\mathrm{dif}(x,y) = \min\set{n \in \w}{x(n) \neq y(n)}.$$

\vbox{
\begin{claim2} $\ $
\begin{enumerate}
\item For any $x \in 2^\w$, 
$\textstyle P(z \text{ selects } x) = \frac{1}{2}.$ $\vphantom{\displaystyle \sum}$
\item Let $k \in \w$ and let $x,y \in 2^\w$ with $\mathrm{dif}(x,y) \geq f(k)$. Then
$$P (z \text{\emph{ selects }} x \,\mid\, z \text{\emph{ does not select }} y) \,<\, \frac{1}{2^k}.$$
\item Let $k \in \w$, let $x_1, \dots, x_n \in 2^\w$, and suppose that $\mathrm{dif}(x_i,x_j) < f(k)$ for all $1 \leq\, i,j \,\leq n$. Then
\begin{align*}
P \!\left( \textstyle \bigwedge_{1 \leq i \leq n} \ z \text{\emph{ does not select }} x_i \right) 
\,<\, e^{-n/2^{k+2}}.
\end{align*}
\end{enumerate}
\end{claim2}
}
\begin{proof}[Proof of claim]
Let $k \in \w$ and let $\mathrm{Ad}(k)$ denote the set of admissible functions with domain $k$. To begin the proof, we show that if $n < k$ and $s \in 2^{[f(n)]}$, then
\begin{equation}\tag{$\dagger$}
\textstyle \card{\set{\z \in \mathrm{Ad}(k)}{s \in \z(n)}} \,=\, \frac{1}{2^{n+2}}\card{\mathrm{Ad}(k)}.
\end{equation}

To see this, let $t \in 2^{[f(n)]}$ with $t \neq s$. Let $\mathrm{dif}(s,t) = \min\set{i}{s(i) \neq t(i)}$, and let $\pi$ be the permutation of $2^{<\w}$ defined by setting $\pi(r)(i) = r(i)$ for all $i \neq \mathrm{dif}(s,t)$, and otherwise for $\mathrm{dif}(s,t) = d$ we take
$$\pi(r)(d) = 
\begin{cases}
t(d) & \text{ if } r(d) = s(d), \\
s(d) & \text{ if } r(d) = t(d), \\
r(d) & \text{ otherwise}
\end{cases}$$
(Roughly, $\pi$ is the permutation of $2^{<\w}$ obtained by swapping the two cones $s\!\uparrow\ = \set{r \in 2^{<\w}}{s \leq r}$ and $t\!\uparrow\ = \set{r \in 2^{<\w}}{s \leq r}$ in the natural way.)

Given a function $\zeta$ with domain $k$, define a function $\pi\zeta$ by putting
$$r \in \pi\zeta(i) \quad \Leftrightarrow \quad \pi(r) \in \zeta(i)$$
for all $i < k$.
By the definition of $\mathrm{dif}(s,t)$ and $\pi$, it is not hard to see that if $m < n < k$, then some member of $\zeta(n)$ extends some member of $\zeta(m)$ if and only if some member of $\pi\zeta(n)$ extends some member of $\pi\zeta(m)$. 
Thus a function $\zeta$ with domain $k$ is admissible if and only if $\pi\zeta$ is also admissible. In other words, the mapping $\zeta \mapsto \pi\zeta$ is a bijection from $\set{\z \in \mathrm{Ad}(k)}{s \in \z(n)}$ to $\set{\z \in \mathrm{Ad}(k)}{t \in \z(n)}$, showing that
$$\textstyle \card{\set{\z \in \mathrm{Ad}(k)}{s \in \z(n)}} = \textstyle \card{\set{\z \in \mathrm{Ad}(k)}{t \in \z(n)}}.$$

As $t$ was arbitrary, this shows that $\card{\set{\z \in \mathrm{Ad}(k)}{s \in \z(n)}}$ depends only on $n$, and not on $s$. In particular, we have
$$\textstyle 2^{f(n)}\card{\set{\z \in \mathrm{Ad}(k)}{s \in \z(n)}} \,=\, \sum_{\z \in \mathrm{Ad}(k)}|\z(n)|$$
for any $s \in 2^{f(n)}$.
As exactly $\frac{1}{2^{n+2}}2^{f(n)}$ members of $2^{[f(n)]}$ appear in each set of the form $\z(n)$, $\z \in \mathrm{Ad}(k)$, we also have 
$$\textstyle \sum_{\z \in \mathrm{Ad}(k)}|\z(n)| \,=\, \frac{1}{2^{n+2}}2^{f(n)} \card{\mathrm{Ad}(k)}$$
Hence $\textstyle 2^{f(n)}\card{\set{\z \in \mathrm{Ad}(k)}{s \in \z(n)}} = \frac{1}{2^{n+2}}2^{f(n)} \card{\mathrm{Ad}(k)}$
for any $s \in 2^{[f(n)]}$, and dividing both sides by $2^{f(n)}$ gives $(\dagger)$.

Next, we claim that for any $x \in 2^\w$ and $k \in \w$,
\begin{equation}\tag{$\ddagger$}
P(z \text{ selects } x \text{ at stage } k) \,=\, \frac{1}{2^{k+2}}.
\end{equation}

Let $x \in 2^\w$ and $k \in \w$, and observe that if $k > n$ then 
\begin{align*}
\set{z \in X_f}{z \text{ selects } x \text{ at stage } k} & \\
= \textstyle \bigcup & \set{[\![\z]\!]\,}{\z \in \mathrm{Ad}(k+1) \text{ and } x \rest f(k) \in \z(k)}
\end{align*}
and this is a disjoint union.
Because $\mu_f([\![\z]\!]) = \frac{1}{|\mathrm{Ad}(k+1)|}$ for all $\z \in \mathrm{Ad}(k+1)$,
\begin{align*}
P(z \text{ selects } x \text{ at stage } k) & = \textstyle \mu_f(\set{z \in X_f}{z \text{ selects } x \text{ at stage } k}) \\
& = \textstyle \frac{1}{|\mathrm{Ad}(k+1)|}\card{\set{\z \in \mathrm{Ad}(k+1)}{x \rest f(k) \in \z(k)}}.
\end{align*}
Combining this with $(\dagger)$ gives $(\ddagger)$.

Using $(\ddagger)$, let us now prove $(1)$ and $(2)$. For $(1)$, note that
$$\textstyle \set{z \in X_f}{z \text{ selects } x} = \bigcup_{k \in \w}\set{z \in X_f}{z \text{ selects } x \text{ at stage } k}$$
and that this is a disjoint union, so
\begin{align*}
P(z \text{ selects } x) & = \textstyle \sum_{k \in \w} P(z \text{ selects } x \text{ at stage } k) \\
& = \textstyle \sum_{k \in \w}\frac{1}{2^{k+2}} = \frac{1}{2}.
\end{align*}
For $(2)$, note that if $x,y \in 2^\w$ and $f(k) \geq \mathrm{dif}(x,y)$, then
\begin{align*}
P & (z \text{ selects } x \text{ and does not select } y) \\
& \qquad <\, P (z \text{ selects } x \text{, but not before stage } k ) \\
& \qquad =\, \textstyle \sum_{n \geq k} P( z \text{ selects } x \text{ at stage } n ) \\
& \qquad =\, \textstyle \sum_{n \geq k} \frac{1}{2^{n+2}} \,=\, \displaystyle \frac{1}{2^{k+1}}.
\end{align*}
Combining this with $(1)$, we have
\begin{align*}
P (z \text{ selects } x & \,\mid\, z \text{ does not select } y) \\
& =\, \frac{P(z \text{ selects } x \text{ and does not select } y)}{P(z \text{ does not select } y)} \\
& =\, \frac{P(z \text{ selects } x \text{ and does not select } y)}{1 - P(z \text{ selects } y)} \,<\, \frac{\frac{1}{2^{k+1}}}{1-\frac{1}{2}} \,=\, \frac{1}{2^k}.
\end{align*}

To prove $(3)$, first observe that 
\begin{align*}
P \!\left( \textstyle \bigwedge_{1 \leq i \leq n} \right.&\left. z \text{ does not select } x_i \vphantom{\textstyle \bigwedge_{1 \leq i \leq n}} \right) \\
\leq\, P \!\left(\vphantom{\textstyle \bigwedge_{1 \leq i \leq n}} \right. & \left. \textstyle \bigwedge_{1 \leq i \leq n} \ z \text{ does not select } x_i \text{ at or before stage } k \right) \\
& \leq\ \frac{P \!\left(\vphantom{\textstyle \bigwedge_{1 \leq i \leq n}} \textstyle \bigwedge_{1 \leq i \leq n} \ z \text{ does not select } x_i \text{ at or before stage } k \right)}{P \!\left(\vphantom{\textstyle \bigwedge_{1 \leq i \leq n}} \textstyle \bigwedge_{1 \leq i \leq n} \ z \text{ does not select } x_i \text{ before stage } k \right)} \\
& =\ P \!\left( \textstyle \bigwedge_{1 \leq i \leq n} \ z \text{ does not select } x_i \text{ at stage } k \right.  \,\Big|\, \\
& \qquad \qquad \qquad  \left. \textstyle  \bigwedge_{1 \leq i \leq n} \ z \text{ does not select } x_i \text{ before stage } k \right) 
\end{align*}
For convenience, let $\a(z)$ denote the statement ``$z$ does not select any of the $x_i$ at stage $k$'' and let $\b(z)$ denote the statement ``$z$ does not select any of the $x_i$ before stage $k$.''
According to the inequalities above, to prove $(3)$ it suffices to show that
\begin{align*}
P \!\left( \a(z) \,\mid\, \b(z) \right) \,<\, \displaystyle e^{-n/2^{k+2}}.
\end{align*}

For each $i \leq n$, let $s_i = x_i \rest (f(k)+1)$ and note that $s_i \neq s_j$ whenever $i \neq j$, because (by assumption) $\mathrm{dif}(x_i,x_j) < f(k)$ whenever $i \neq j$. 

Given an admissible function $\zeta$ and some $\ell \geq \mathrm{dom}(\zeta)$, let us say (as in the proof of Claim 1) that $s \in 2^{[f(\ell)]}$ is \emph{$\zeta$-available} if $s$ does not extend any member of $\zeta(i)$ for any $i < \mathrm{dom}(\zeta)$.
We showed in the proof of Claim 1 that
\begin{itemize}
\item[$(*)$] if $\mathrm{dom}(\zeta) = k \leq \ell$, then the number of $\zeta$-available vertices in $2^{[f(\ell)]}$ is exactly $\left( \frac{1}{2} + \frac{1}{2^{k+1}} \right) 2^{f(\ell)}$. 
\end{itemize} 
If $\z$ is admissible and $\mathrm{dom}(\z) = k$, then define
\begin{align*}
\mathrm{Ext}_{k+1}(\zeta) & = \set{\zeta'}{\zeta' \text{ is admissible, } \mathrm{dom}(\zeta') = k+1, \text{ and } \zeta' \rest k = \zeta}, \\
\mathrm{Ext}^\a_{k+1}(\zeta) & = \set{\zeta' \in \mathrm{Ext}_{k+1}(\zeta)}{\text{for every } i \leq n,\, s_i \notin \zeta'(k)}.
\end{align*}

If $\zeta' \in \mathrm{Ext}_{k+1}(\zeta)$, then (by definition) $\zeta'(k)$ can be any $\frac{1}{2^{k+2}}2^{f(k)}$-sized subset of the $\left( \frac{1}{2} + \frac{1}{2^{k+1}} \right)2^{f(k)}$ $\zeta$-available vertices in $2^{[f(k)]}$. In other words, setting $M = \left( \frac{1}{2} + \frac{1}{2^{k+1}} \right)2^{f(k)}$ and $N = \frac{1}{2^{k+2}}2^{f(k)}$ we have 
$$\card{\mathrm{Ext}_{k+1}(\zeta)} = {M \choose N} \quad \text{ and } \quad \card{\mathrm{Ext}^\a_{k+1}(\zeta)} = {M-n \choose N}.$$
We now proceed to find an upper bound for the ratio
$$\frac{\card{\mathrm{Ext}^\a_{k+1}(\zeta)}}{\card{\mathrm{Ext}_{k+1}(\zeta)}} \,=\, \frac{{M-n \choose N}}{{M \choose N}}.$$
Observe that
\begin{align*}
\frac{{M-n \choose N}}{{M \choose N}} \, & =\, \frac{\ \ \frac{(M-n)(M-n-1) \cdots (M-N-n+1))}{(N)(N-1) \cdots (2)(1)}\ \ }{\frac{M(M-1) \cdots (M-N+1)}{N(N-1) \dots (2)(1)}} \\
& =\, \frac{(M-n)(M-n-1) \cdots (M-N-n+1)}{M(M-1) \cdots (M-N+1)} \\
& \leq\, \left( \frac{M-n}{M} \right)^{\!N} \,=\, \left( 1 - \frac{n}{M} \right)^{\!N}.
\end{align*}
Next, recall that if $0 < x < 1$ then $\ln (1 - x) < -x$. (One can see this, for example, by using Taylor series: if $0 < x < 1$ then we have $\ln (1-x) = \textstyle -\sum_{m = 1}^\infty \frac{x^m}{m} \,<\, \displaystyle -x.$) Therefore
$$\ln \!\left( \frac{{M-n \choose N}}{{M \choose N}} \right) \,\leq\, \ln \!\left( 1 - \frac{n}{M} \right)^{\!N} \,=\, N \ln \!\left( 1 - \frac{n}{M} \right) \,<\, \frac{-nN}{M}$$
and it follows that $\ln \!\left( \frac{\card{\mathrm{Ext}^\a_{k+1}(\zeta)}}{\card{\mathrm{Ext}_{k+1}(\zeta)}} \right) < \displaystyle \frac{-nN}{M}$ or, equivalently,
$$\frac{\card{\mathrm{Ext}^\a_{k+1}(\zeta)}}{\card{\mathrm{Ext}_{k+1}(\zeta)}} \,<\, e^{\frac{-nN}{M}}.$$
Now observe that
$$\frac{N}{M} \,=\, \frac{\frac{1}{2^{k+2}}2^{f(k)}}{\left( \frac{1}{2} + \frac{1}{2^{k+1}} \right)2^{f(k)}} \,>\, \frac{1}{2^{k+2}}$$
and putting this together with the previous inequality gives
$$\frac{\card{\mathrm{Ext}^\a_{k+1}(\zeta)}}{\card{\mathrm{Ext}_{k+1}(\zeta)}} \,<\, e^{-n/2^{k+2}}.$$

To finish the proof, let
\begin{align*}
B = \set{\zeta}{\zeta \text{ is admissible, } \right.& \mathrm{dom}(\zeta) = k, \text{ and }\\
&\left. x_i \rest j \notin \zeta(j) \text{ for any } j < k \text{ and } 1 \leq i \leq n}.
\end{align*}
Observe that $\b(z)$ holds if and only if $z \rest k \in B$; that is,
$$\set{z \in X_f}{\b(z)} = \textstyle \bigcup \set{[\![\zeta]\!]}{\zeta \in B},$$
and this is a disjoint union. It follows that
$$P(\b(z)) \,=\, \textstyle \mu_f(\set{z}{\b(z)}) \,=\, \sum_{\z \in B}\mu_f([\![\z]\!]).$$

Similarly, observe that $\a(z) \wedge \b(z)$ holds if and only if $z \rest k \in B$ and $z \rest (k+1) \in \mathrm{Ext}_{k+1}^\a$; that is,
$$\set{z \in X_f}{\a(z) \wedge \b(z)} = \textstyle \bigcup \set{[\![\zeta']\!]}{\z' \in \mathrm{Ext}_{k+1}^\a(\z) \text{ for some } \zeta \in B },$$
and this is also a disjoint union. It follows that
$$\textstyle P(\a(z) \wedge \b(z)) \,=\, \mu_f(\set{z}{\a(z) \wedge \b(z)}) \,=\, \sum_{\z \in B}\sum_{\z' \in \mathrm{Ext}^\a_{k+1}(\z)} \mu_f([\![\z']\!]).$$

Finally, note that because of the way we have defined $\mu_f$, if $\z \in B$ then $\mu_f([\![\z']\!]) = \frac{\mu_f([\![\z]\!])}{\card{\mathrm{Ext}_{k+1}(\z)}}$ for all $\z' \in \mathrm{Ext}_{k+1}(\z)$, which implies 
$$\textstyle \sum_{\z' \in \mathrm{Ext}^\a_{k+1}} \mu_f([\![\z']\!]) \,=\, \mu_f([\![\z]\!])\frac{\card{\mathrm{Ext}^\a_{k+1}(\zeta)}}{\card{\mathrm{Ext}_{k+1}(\zeta)}}$$
for any $\z \in B$.
Hence
\begin{align*}
P(\a(z) \,\mid\, \b(z)) & \,=\, \frac{P(\a(z) \wedge \b(z))}{P(\b(z))} \\
& =\, \frac{\sum_{\z \in B} \sum_{\z' \in \mathrm{Ext}^\a_{k+1}(\z)} \mu_f([\![\z']\!])}{\sum_{\z \in B}\mu_f([\![\z]\!])} \\
& =\, \frac{\sum_{\z \in B}\mu_f([\![\z]\!])\frac{\card{\mathrm{Ext}^\a_{k+1}(\zeta)}}{\card{\mathrm{Ext}_{k+1}(\zeta)}}}{\sum_{\z \in B}\mu_f([\![\z]\!])} \\
& <\, \frac{\sum_{\z \in B}\mu_f([\![\z]\!])e^{-n/2^{k+2}}}{\sum_{\z \in B}\mu_f([\![\z]\!])} \,=\, e^{-n/2^{k+2}}
\end{align*}
as claimed.
\end{proof}

Recall that two measure spaces $(X,\mu)$ and $(Y,\nu)$ are \emph{isomorphic} if there is a bijection $\phi: X \to Y$ such that, for every $Z \sub X$, $\phi(Z)$ is $\nu$-measurable if and only if $Z$ is $\mu$-measurable, and if this is the case then $\mu(Z) = \nu(\phi(Z))$. By \cite[Theorem 17.41]{Kechris}, the measure $\mu_f$ defined on $X_f$ above is isomorphic to the Lebesgue measure on $2^\w$. From this and Lemma~\ref{lem:outermeasure1}, it follows that there is a subset $Z_f \sub X_f$ with $|Z_f| = \non N$ such that $Z_f$ has outer $\mu_f$-measure $1$. (If $\phi: 2^\w \to X_f$ is an isomorphism of the Lebesgue measure with $\mu_f$, then we may simply take $Z_f = \phi(Z)$, where $Z \sub 2^\w$ has outer measure $1$ and $|Z| = \non N$.)

Let $\B$ be an unbounded family of functions $\w \to \w$ with $|\B| = \bdd$. Without loss of generality, we may assume that every $f \in \B$ is an increasing function with $f(0) \geq 2$ (so that $X_f$ and $\mu_f$ are well-defined, and have all the properties discussed above). For each $f \in \B$, fix some $Z_f \sub X_f$ such that such that $Z_f$ has outer measure $1$ with respect to $\mu_f$ and $|Z_f| = \non N$. 

Let $\U = \set{U^f_z}{f \in \B \text{ and } z \in Z_f}$, and observe that 
$$|\U| \,=\, \textstyle \sum_{f \in \B}|Z_f| \,=\, |\B| \cdot \non N \,=\, \max\{\bdd,\non N\}.$$
Thus, to finish the proof of the theorem, it suffices to show that every $X \in [2^\w]^{con}$ is split by some $U \in \U$.

Let $X \in [2^\w]^{con}$, and let $p$ denote the (unique) limit point of $X$ in $2^\w$. Replacing $X$ with $X \setminus \{p\}$ if necessary, we may (and do) assume that $p \notin X$. 

For each $i \in \w$, let $N_i$ denote the least natural number with the property that $e^{-N_i/2^{i+2}} < \frac{1}{2^i}$.

Given $F \sub 2^\w$, we say that the points of $F$ are \emph{distinguished by} $\ell \in \w$ if $\mathrm{dif}(x,y) < \ell$ for all $x,y \in F$.
Notice that if the points of $F$ are distinguished by $\ell$, then they are distinguished by any $\ell' > \ell$ also.

Define a function $g_X: \w \to \w$ as follows:
\begin{itemize}
\item for each $k \in \w$, define $g_X(k)$ to be the least natural number such that at least $N_0+N_1+\dots+N_k$ points of $X$ are distinguished by $g_X(k)$.
\end{itemize}
Fix $f \in \B$ such that $f(k) > g_X(k)$ for infinitely many values of $k$. 

\begin{claim3}
$P\!\left( X \setminus U^f_z \text{\emph{ is infinite}} \ \Big|\ z \text{\emph{ does not select }} p \right) = 1$.
\end{claim3}
\begin{proof}[Proof of claim]
Let $n \in \w$, and let $\e > 0$. Fix some $k$ large enough that $\frac{n}{2^k} < \e$, and let $x_1, x_2, \dots, x_n$ be points of $X$ such that $\mathrm{dif}(x_i,p) \geq f(k)$ for all $1 \leq i \leq n$. (Recall that because $p$ is a limit point of $X$, there are $x \in X$ such that $\mathrm{dif}(x,p)$ is arbitrarily large.) Observe that
\begin{align*}
& P \!\left(\textstyle \bigvee_{1 \leq i \leq n} \ z \text{ selects } x_i \ \Big|\, \vphantom{\textstyle \bigwedge_{1 \leq i \leq n}}\!\right. \left. z \text{ does not select } p \vphantom{\textstyle \bigwedge_{1 \leq i \leq n}} \right) \\
& \leq \,
\textstyle \sum_{1 \leq i \leq n} P \!\left( z \text{ selects } x_i \ \Big|\, \vphantom{\textstyle \bigwedge_{1 \leq i \leq n}}\!\right. \left. z \text{ does not select } p \vphantom{\textstyle \bigwedge_{1 \leq i \leq n}} \right) \\
& < \,
\frac{n}{2^k} \,<\, \e.
\end{align*}
(The first inequality follows from the definition of $P(\,\dots)$ and the finite additivity of $\mu_f$, the second follows from part $(2)$ of Claim 2, and the third follows from our choice of $k$.)
It follows that
$$P\!\left( \textstyle \bigwedge_{1 \leq i \leq n} z \text{ does not select }x_i \ \Big|\ z \text{ does not select } p \right) \,>\, 1-\e$$
which implies in particular that
$$P\!\left( X \setminus U^f_z \text{ contains at least } n \text{ points} \ \Big|\ z \text{ does not select } p \right) \,>\, 1-\e.$$
This is true for any $\e > 0$, so
$$P\!\left( X \setminus U^f_z \text{ contains at least } n \text{ points} \ \Big|\ z \text{ does not select } p \right) \,=\, 1.$$
This is true for every $n \in \w$, so
$$P\!\left( X \setminus U^f_z \text{ is infinite} \ \Big|\ z \text{ does not select } p \right) = 1$$
as claimed.
\end{proof}

Notice that the proof of Claim 3 did not require us to use any particular properties of the function $f$; in fact, Claim 3 is true for any $f: \w \to \w$ for which $X_f$ is defined. This is not the case for the next claim, which is only true for sufficiently fast-growing $f$.

\begin{claim4}
$P\!\left( X \cap U^f_z \text{\emph{ is infinite}} \right) = 1$.
\end{claim4}
\begin{proof}[Proof of claim]
By our choice of $f$, there are infinitely many values of $k$ such that at least $N_0+N_1+\dots+N_k$ points of $X$ are distinguished by $f(k)$, where the $N_i$ are defined as above, so that each $N_i$ is big enough that $e^{-N_i/2^{i+2}} < \frac{1}{2^i}$. Let $k_0, k_1, k_2, \dots$ be an infinite, increasing sequence of such values of $k$.

By recursion, define an infinite sequence $X_0, X_1, X_2, \dots$ of finite subsets of $X$ as follows: for each $\ell \in \w$, choose $X_\ell$ so that
\begin{itemize}
\item $|X_\ell| = N_{k_\ell}$,
\item all the points of $X_\ell$ are distinguished by $f(k_\ell)$, and
\item if $j < \ell$ then $X_j \cap X_\ell = \0$.
\end{itemize}
It is clear from our choice of the $k_\ell$ that such a set $X_\ell$ always exists.

Let $\e > 0$, and fix $\ell \in \w$ large enough that $\sum_{j \geq \ell}\frac{1}{2^{k_j}} < \e$. For each $j \geq \ell$, part $(3)$ of Claim 2 states that
\begin{align*}
P \!\left( X_j \cap U^f_z \,=\, \0 \vphantom{U^f_z} \right) & \,=\, P\!\left( \textstyle \bigwedge_{x \in X_j} \ z \text{ does not select } x \right) \\ 
& \,<\, \displaystyle e^{-N_{k_j}/2^{k_j+2}} \,<\, \frac{1}{2^{k_j}}.
\end{align*}
It follows that
$$P\!\left( X_j \cap U^f_z = \0 \text{ for some } j \geq \ell \right) \,\leq\, \sum_{j \geq \ell} P \!\left( X_j \cap U^f_z \,=\, \0 \vphantom{U^f_z} \right) 
 \,<\, \sum_{j \geq \ell}\frac{1}{2^{k_j}} \,<\, \e.$$
Taking complements, we have
$$P\!\left( X_j \cap U^f_z \neq \0 \text{ for all } j \geq \ell \right) \,>\, 1-\e.$$
Because the $X_j$ are disjoint subsets of $X$, this implies
$$P\!\left( X \cap U^f_z \text{ is infinite} \right) \,>\, 1-\e$$
and as $\e$ was arbitrary, this proves the claim.
\end{proof}

From Claims 3 and 4, it follows that
$$P\!\left( U^f_z \text{ splits } X \ \Big|\ z \text{ does not select } p \right) = 1.$$
Observe that if some $z \in X_f$ selects $p$, then $U^f_z \ni p$, which implies (because $U^f_z$ is open and $p$ is the unique limit point of $X$) that $U^f_z$ contains all but finitely many points of $X$. Thus
$$P\!\left( U^f_z \text{ splits } X \ \Big|\ z \text{ selects } p \right) = 0.$$

From part $(1)$ of Claim 2, we also know that $P\!\left( z \text{ does not select } p \right) = {1 - P\!\left( z \text{ selects } p \right)} = \frac{1}{2}$. Therefore
\begin{align*}
& P\!\left( U^f_z \text{ splits } X \right) \\
& \,=\, P\left( \vphantom{U^f_z} z \text{ does not select } p \right)P\!\left( U^f_z \text{ splits } X  \ \Big|\ z \text{ does not select } p \right) \,=\, \frac{1}{2}
\end{align*}
or, in other words, $\set{z}{U^f_z \text{ splits }X}$ has $\mu_f$-measure $\frac{1}{2}$ in $X_f$. Because $Z_f$ has outer $\mu_f$-measure $1$, it follows that $Z_f \cap \set{z}{U^f_z \text{ splits }X} \neq \0$. If $z$ is any point of this intersection, then $U^f_z$ is a member of $\U$ that splits $X$. Thus every member of $[2^\w]^{con}$ is split by some $U \in \U$.
\end{proof}

Theorem~\ref{thm:upperbound} implies that $\sr = \aleph_1$ in the random real model, or a little more generally, in any model obtained by adding random reals to a model of \ch. This implies that such models contain Efimov spaces of weight $\aleph_1$. It is worth mentioning that this was known already for the random real model: it was proved by the second author and David Fremlin, using different methods, that $\mathsf{st}\!\left( (\pnmf)^V \right)$ is an Efimov space in the random real model \cite{Dow&Fremlin}.

\section{Cicho\'n's diagram}\label{sec:cichon}

In the introduction, we claimed that for every cardinal $\kp$ appearing in Cicho\'n's diagram, either $\kp$ is a (consistently strict) lower bound for $\sr$, or $\kp$ is a (consistently strict) upper bound for $\sr$, or else each of $\kp < \sr$ and $\sr < \kp$ is consistent.
Let us review our progress on this so far:

\begin{itemize}
\item $\cov M$ and $\bdd$ are consistently strict lower bounds for $\sr$: That they are lower bounds for $\sr$ was proved in Theorems \ref{thm:covMlb} and \ref{thm:blb}. They are consistently strict because $\bdd < \cov M \leq \sr$ in the Cohen model, and $\cov M < \bdd \leq \sr$ in the Laver model. \vspace{1mm}

\item $\aleph_1$, $\add N$, and $\add M$ are consistently strictly lower bounds for $\sr$: This follows from the previous bullet point, because each of these cardinals is bounded above by both \cov M and $\bdd$. \vspace{1mm}

\item $\continuum$ and $\cof N$ are consistently strict upper bounds for $\sr$: We showed in Theorem~\ref{thm:upperbound} that $\sr \leq \max\{\bdd,\non N\}$. Both $\bdd$ and \non N are bounded above by \cof N and $\continuum$, so it follows that $\sr \leq \cof N,\continuum$. These bounds are consistently strict because in the Miller model, $\aleph_1 = \max\{\bdd,\non N\} < \cof N = \continuum = \aleph_2$. \vspace{1mm}

\item \cov N and \non M are incomparable with $\sr$, in the sense that both $\sr < \cov N, \non M$ and $\sr > \cov N, \non M$ are consistent. To see the first pair of inequalities, note that 
$$\aleph_1 = \bdd = \non N < \cov N = \non M = \continuum$$
in the random model; as $\sr \leq \max\{\bdd,\non N\}$ by Theorem~\ref{thm:upperbound}, it follows that $\sr < \cov N, \non M$ in the random model. To see the second pair of inequalities, note that
$$\aleph_1 = \cov N = \non M < \cov M = \continuum$$
in the Cohen model; as $\cov M \leq \sr$ by Theorem~\ref{thm:covMlb}, we have $\cov N, \non M < \sr$ in the Cohen model.
\end{itemize}

This takes care of $9$ of the $12$ cardinals in Cicho\'n's diagram: we have $5$ consistently strict lower bounds, $2$ consistently strict upper bounds, and $2$ cardinals that are provably incomparable with $\sr$. The remaining $3$ cardinals, \cof M, \non N, and $\dom$, are also incomparable with $\sr$. Here is what we know so far concerning these $3$ cardinals:

\begin{itemize}
\item $\sr < \cof M$ is consistent: Recall that
$\bdd = \non N < \cof M$ in the random model.
As $\sr \leq \max\{\bdd,\non N\}$ by Theorem~\ref{thm:upperbound}, it follows that $\sr < \cof M$ in the random model. \vspace{1mm}

\item $\sr < \dom$ is consistent: Recall that
$\bdd = \non N < \dom$ in the Miller model. 
As $\sr \leq \max\{\bdd,\non N\}$ by Theorem~\ref{thm:upperbound}, it follows that $\sr < \dom$ in the Miller model. \vspace{1mm}

\item $\non N < \sr$ is consistent: Recall that $\non N < \bdd$ in the Laver model. 
As $\bdd \leq \sr$ by Theorem~\ref{thm:blb}, it follows that $\non N < \sr$ in the Laver model. 
\end{itemize}

It remains to prove the consistency of $\sr < \non N$ and the consistency of $\dom,\cof M < \sr$.

\begin{theorem}\label{thm:miller}
It is consistent with \zfc that $\sr < \non N$.
\end{theorem}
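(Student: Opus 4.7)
The plan is to produce a model in which $\bdd = \cov M = \sr = \aleph_1 < \non N = \aleph_2$. Starting from a ground model $V$ of $\ch$ (where $\sr = \aleph_1$ is automatic), I would build a countable-support iteration $\langle \PP_\a, \dot\Q_\a : \a < \w_2 \rangle$ of a carefully chosen proper forcing $\dot\Q_\a$. The final model should satisfy $\non N = \aleph_2$ while preserving a fixed ground-model family $\U$ of $\aleph_1$ open subsets of $2^\w$ which, together with open sets added cofinally along the iteration, splits every convergent sequence in the extension.

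Each iterand $\dot\Q_\a$ must serve two competing purposes. First, it must add reals that witness $\non N = \aleph_2$: using standard bookkeeping, at cofinally many stages $\a$ a real should be added which is not contained in any null Borel set coded in the intermediate model $V[G_\a]$, so that every $\aleph_1$-sized set of reals in $V[G_{\w_2}]$ is captured in some $V[G_\a]$ and hence is null. Random-style forcings, or Suslin-proper variants of them, are natural candidates since they are $\w^\w$-bounding and do not add Cohen or unbounded reals, which keeps $\bdd$, $\dom$, and $\cov M$ at $\aleph_1$ along the iteration. Second, the forcing $\dot\Q_\a$ must not add a convergent sequence that escapes the splitting family that has been constructed by stage $\a$.

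The central technical step is therefore a preservation theorem for $\sr$: if $\U$ is a family of open sets splitting every convergent sequence in $V[G_\a]$, and $\dot\Q_\a$ is the chosen iterand, then $\U$ together with the finitely many open sets added at stage $\a$ continues to split every convergent sequence in $V[G_{\a+1}]$. I would prove this by a fusion-style argument in the vein of Bartoszy\'nski--Judah, approximating any $\dot\Q_\a$-name $\langle \dot y_n : n \in \w \rangle$ for a convergent sequence by a tree of conditions whose branches realize a ground-model convergent sequence, and then applying the splitting property of $\U$ to that approximation, mirroring the way Lemma~\ref{lem:noseq} transfers splitting between algebras. Countable support iteration of proper forcings with the appropriate preservation property (along the lines of Shelah's preservation machinery for tools-of-trade iteration lemmas) will then guarantee that $\U$, augmented at each stage by the $\aleph_1$ open sets prescribed by bookkeeping, witnesses $\sr = \aleph_1$ in $V[G_{\w_2}]$.

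The hard part is isolating an iterand $\dot\Q_\a$ that simultaneously adds a null-avoiding real and admits the $\sr$-preservation property, and then verifying that these properties are preserved under countable support iteration. This is essentially a variant of the standard preservation-of-$\split$ or preservation-of-a-mad-family arguments, adapted to open sets rather than subsets of $\w$, with the twist that the underlying topological structure of $2^\w$ must interact well with the randomizing forcing. Once the iterand and its preservation are established, the final computation is routine: at the end of the iteration $\non N = \aleph_2$ by bookkeeping on null codes, $\bdd = \cov M = \aleph_1$ by $\w^\w$-boundedness and absence of Cohen reals, and $\sr = \aleph_1$ because the family $\U$ constructed along the iteration is of cardinality $\aleph_1$ and splits every convergent sequence, so $\sr < \non N$ as desired.
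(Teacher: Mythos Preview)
Your plan has a genuine gap: you have confused $\non N$ with its dual $\cov N$. Adding a real that avoids every null Borel set coded in $V[G_\a]$ is precisely adding a random real over $V[G_\a]$, and iterating this makes $\cov N$ large, not $\non N$. Random forcing preserves outer measure, so the ground-model reals retain outer measure $1$ in the extension; in particular $(2^\w)^{V}$ remains a non-null set of size $\aleph_1$ throughout any iteration of random-style forcings over a model of \ch, and you end with $\non N = \aleph_1$. Your sentence ``every $\aleph_1$-sized set of reals in $V[G_{\w_2}]$ is captured in some $V[G_\a]$ and hence is null'' is exactly where the argument fails: reflection into an intermediate model does not by itself make a set null. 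To force $\non N$ up you would need iterands that cover the intermediate-model reals by a null set (amoeba-type forcings), but those add Cohen reals and dominating reals, destroying the $\bdd = \cov M = \aleph_1$ configuration you rely on and making any $\sr$-preservation argument hopeless.

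The paper goes in the opposite direction, and the argument is short. Rather than starting from \ch\ and trying to push $\non N$ up while preserving a splitting family, one starts from a model of $\ma + \neg\ch$, where $\non N = \continuum > \aleph_1$ already, and then forces $\sr$ down to $\aleph_1$ with a length-$\w_1$ finite-support iteration of a simple $\s$-centered poset (Miller's poset, whose conditions are finite sequences of pairs $(C_i,F_i)$ with $C_i$ clopen and $F_i$ finite, promising $C_i \sub U_i$ and $F_i \cap U_i = \0$). A one-paragraph density argument shows each stage adds countably many open sets splitting every ground-model convergent sequence, so after $\w_1$ stages $\sr = \aleph_1$. The only remaining point is that a $\s$-centered forcing over a model of \ma\ cannot lower $\non N$: no random reals are added, so every new real lies in a ground-model null Borel set, and \ma\ in the ground model lets you cover any ${<}\continuum$-sized family of such null sets by a single null set. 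No preservation machinery or fusion is needed.
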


The proof strategy is to begin with a model of $\ma+\neg\ch$, and then to force with a $\s$-centered partial order that will make $\sr = \aleph_1$ while preserving the value of $\non N$.

\begin{lemma}\label{lem:miller'sforcing}
There is a $\s$-centered notion of forcing $\PP$ that adds a countable collection $\U$ of open subsets of the Cantor space $2^\w$ such that if $X \in [2^\w]^{con}$ belongs to the ground model, then there is some $U \in \U$ that splits $X$.
\end{lemma}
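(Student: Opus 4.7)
The plan is to design $\PP$ as a poset of finite approximations that simultaneously specifies positive and negative commitments to a countable family of generic open sets together with a finite promise structure. A condition is a triple $p = (s^p, R^p, \F^p)$, where $s^p$ and $R^p$ are finite partial functions $\w \to [2^{<\w}]^{<\w}$ (interpreting $t \in s^p(n)$ as the commitment $[t] \sub U_n$ and $t \in R^p(n)$ as $[t] \cap U_n = \0$), and $\F^p$ is a finite subset of $[2^\w]^{con} \times \w$ recording which ground model convergent sequence is to be split by which $U_n$. The coherence requirements are (i) $[t] \cap [t'] = \0$ for every $n$ and every $t \in R^p(n)$, $t' \in s^p(n)$; and (ii) for every $(X,n) \in \F^p$, the limit $x_X$ of $X$ lies outside $\bigcup_{t \in s^p(n)}[t] \cup \bigcup_{t \in R^p(n)}[t]$. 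The order is componentwise extension.

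For $\s$-centeredness, I partition $\PP$ by the pair $(s^p, R^p)$, which ranges over a countable set. Finitely many conditions in the same class have a common extension obtained by keeping $(s, R)$ fixed and taking the union of the $\F$-components; since $(s,R)$ is unchanged, coherence (ii) for each pair in the merged $\F$ is inherited from its originating condition.

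The generic family is $\U = \set{U_n}{n \in \w}$ with $U_n = \bigcup\set{[t]}{\exists q \in G,\ t \in s^q(n)}$, a countable family of open sets in $V[G]$. To show that every ground model $X \in [2^\w]^{con}$ is split by some $U_n$, I verify three density facts: (D1) $\set{q}{(X,n) \in \F^q \text{ for some } n}$ is dense, realized by introducing a fresh $n_0$ with $s^q(n_0) = R^q(n_0) = \0$ and adding $(X, n_0)$ to $\F^q$; and for each $(X, n) \in \F^p$ and each $k \in \w$, both (D2) $\set{r \leq p}{|X \cap \bigcup_{t \in s^r(n)}[t]| \geq k}$ and (D3) $\set{r \leq p}{|X \cap \bigcup_{t \in R^r(n)}[t]| \geq k}$ are dense. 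The key observation driving (D2) and (D3) is that coherence (ii) forces $x_X$ to lie outside the clopen set $K = \bigcup_{t \in s^p(n) \cup R^p(n)}[t]$, so a sufficiently long prefix $u$ of $x_X$ yields a clopen neighborhood $[u]$ of $x_X$ disjoint from $K$, which then contains cofinitely many points of $X$. Picking a new $y \in X \cap [u]$ distinct from the finitely many $x_{X'}$ with $(X', n) \in \F^p$, a long enough prefix $t \sub y$ gives a basic clopen $[t] \sub [u]$ avoiding $K$ and the forbidden limits; $t$ can then be freely added to either $s^r(n)$ or $R^r(n)$ without violating (i) or (ii).

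The main obstacle is calibrating conditions (i) and (ii) so that the $(s^p, R^p)$-parameterization is genuinely $\s$-centered while still leaving room to extend commitments in either direction at will. Once (D1)--(D3) are verified, genericity provides for each ground-model $X$ an index $n_X$ with $(X, n_X) \in \F^p$ for some $p \in G$, and both $X \cap U_{n_X}$ and $X \cap \bigcup\set{[t]}{\exists q \in G,\ t \in R^q(n_X)}$ are infinite. The second set is disjoint from $U_{n_X}$ by (i) together with the upward directedness of $G$, so $X \setminus U_{n_X}$ is also infinite, whence $U_{n_X}$ splits $X$.
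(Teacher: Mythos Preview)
Your argument is correct. The approach is the same in spirit as the paper's (Miller's forcing $\PP(2^\w)$: finite approximations with positive and negative commitments, $\sigma$-centered by fixing the positive part), but your conditions carry more structure than necessary. In the paper, a condition is simply a finite sequence $\seq{(C_i,F_i)}{i<m}$ with $C_i$ clopen and $F_i \sub 2^\w \setminus C_i$ finite; the negative commitments $F_i$ are finite sets of \emph{points}, not basic clopen sets, and there is no separate promise component $\F^p$. Instead of recording a pair $(X,n)$ and maintaining the invariant that $x_X$ stays outside both sides, the paper just puts the limit point $x$ directly into $F_i$ for some $i$; this single act both guarantees $x \notin U_i$ and leaves every clopen neighborhood of $x$ available for extending $C_i$, so the density arguments for ``$|X \cap C^q_i| \geq k$'' and ``$|X \cap F^q_i| \geq k$'' go through immediately. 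Your coherence requirement (ii) is doing the work that the paper accomplishes more cheaply by using points rather than clopens on the negative side. Both versions yield $\sigma$-centeredness via a countable parameterization (clopen sets in $2^\w$ are countable), so nothing is lost or gained there; the paper's formulation is just leaner.
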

\begin{proof}
For this lemma we use (a slight variation of) a notion of forcing $\PP$ introduced by Arnie Miller in \cite{Miller}. Conditions in $\PP$ are finite sequences of the form $\seq{(C_i,F_i)}{i < m}$, where each $C_i$ is a clopen subset of $2^\w$ and each $F_i$ is a finite subset of $2^\w \setminus C_i$. A condition $\seq{(C_i',F_i')}{i < n}$ extends the condition $\seq{(C_i,F_i)}{i < m}$ if $m \leq n$ and for each $i < m$, $F_i' \supseteq F_i$ and $C_i' \supseteq C_i$. 
(In Miller's terminology, this poset is called $\PP(2^\w)$.)
Intuitively, this forcing builds countably many open sets $U_i$, $i < \w$, and a condition $\seq{(C_i,F_i)}{i < n}$ constitutes a promise that $C_i \sub U_i$ and that $F_i \cap U_i = \0$ for each $i < n$.

Suppose $G$ is a $V$-generic filter for $\PP$, and in $V[G]$ define
$$U_i = \textstyle \bigcup \set{C}{\text{there is a condition } p \in G \text{ such that } p(i) = (C,F)}$$
for each $i < \w$. We claim that $\U = \set{U_i}{i < \w}$ satisfies the conclusion of the theorem.

Let $X \in [2^\w]^{con}$, $X \in V$, and let $x$ be the unique accumulation point of $X$. There is some $p \in G$ such that for some $i \in \mathrm{dom}(p)$, $p(i) = (C^p_i,F^p_i)$ and $x \in F^p_i$ (because the set of all such $p$ is clearly dense in $\PP$). We claim $U_i$ splits $X$. To see this, define for each natural number $n$ two subsets of $\PP$:
\begin{align*}
D_n & = \set{q \leq p}{C^q_i \text{ contains } \geq\! n \text{ members of }X, \text{ where } q(i) = (C^q_i,F^q_i)}, \\
E_n & = \set{q \leq p}{F^q_i \text{ contains } \geq\! n \text{ members of }X, \text{ where } q(i) = (C^q_i,F^q_i)}.
\end{align*}
$D_n$ is dense below $p$, because for any $p' \leq p$, if $p'(i) = (C^{p'}_i,F^{p'}_i)$ then we may form $q$ by extending $C^{p'}_i$ to include $\geq\!n$ members of the infinite set $X \setminus F^{p'}_i$. 
Likewise $E_n$ is dense below $p$, because for any $p' \leq p$, if $p'(i) = (C^{p'}_i,F^{p'}_i)$ then (because the limit point $x$ of $X$ is not contained in the closed set $C^{p'}_i$) there are infinitely many points in $X \setminus C^{p'}_i$, and we may extend $F^{p'}_i$ to include $\geq\! n$ of them. Thus $G$ meets every $D_n$, from which it follows that $U_i$ contains infinitely many points of $X$, and $G$ meets every $E_n$, from which it follows that $2^\w \setminus U_i$ contains infinitely many points of $X$. Thus $U_i$ splits $X$.
\end{proof}

\begin{lemma}\label{lem:iteratemiller'sforcing}
There is a $\s$-centered notion of forcing $\PP$ that does not change the value of $\continuum$ and that forces $\sr = \aleph_1$.
\end{lemma}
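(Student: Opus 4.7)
The plan is to define $\PP$ as the finite-support iteration of length $\w_1$ of the forcing from Lemma~\ref{lem:miller'sforcing}. Explicitly, I would recursively build an iteration $\seq{(\PP_\a, \dot{\mathbb{Q}}_\a)}{\a \leq \w_1}$ where each $\dot{\mathbb{Q}}_\a$ is a $\PP_\a$-name for the poset of Lemma~\ref{lem:miller'sforcing} as computed in $V^{\PP_\a}$, and take $\PP := \PP_{\w_1}$.

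To see that $\PP$ is $\s$-centered, I would invoke the standard theorem that a finite-support iteration of forcings, each forced to be $\s$-centered, is itself $\s$-centered provided the iteration has length at most $\continuum$. Alternatively, the specific structure of the iterands in Lemma~\ref{lem:miller'sforcing} admits a direct argument: conditions $\seq{(C_i,F_i)}{i < m}$ sharing the same tuple of clopen sets $(C_i)$ are pairwise compatible, so each $\dot{\mathbb{Q}}_\a$ is naturally partitioned into countably many centered classes indexed by finite sequences of clopen subsets of $2^\w$. A standard bookkeeping combining such indices across the $\w_1$-many stages (using $\w_1 \leq \continuum$) partitions $\PP$ itself into countably many centered pieces. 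For $\continuum$-preservation, $|\PP| = \continuum$ and $\PP$ is ccc, so it adds at most $\continuum^{\aleph_0} = \continuum$ new reals, leaving $\continuum$ unchanged.

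Next I would verify $\PP \forces \sr = \aleph_1$. At each stage $\a < \w_1$, $\dot{\mathbb{Q}}_\a$ adjoins a countable family $\dot\U_\a$ of open subsets of $2^\w$ splitting every convergent sequence in $V^{\PP_\a}$. Let $\dot\U = \bigcup_{\a < \w_1} \dot\U_\a$, which is forced to have size $\aleph_1$ in $V^\PP$. Given any $X \in [2^\w]^{con}$ in $V^\PP$, the set $X$ is coded by a single real, so by the ccc of $\PP$ there is some $\a < \w_1$ with $X \in V^{\PP_\a}$; by construction, some $U \in \dot\U_\a$ splits $X$. By the theorem identifying $\sr$ with $\split^{con}(2^\w)$, this yields $\sr \leq \aleph_1$ in $V^\PP$, and Lemma~\ref{lem:split} gives the reverse inequality.

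The main obstacle is the $\s$-centeredness step. Preservation of $\s$-centeredness under finite-support iterations is not automatic and depends delicately on the iteration length being bounded by $\continuum$; any detailed proof must either cite the appropriate general preservation theorem or perform a careful bookkeeping that exploits the concrete form of the Miller-type conditions to exhibit a countable centered partition of $\PP$ itself. Everything else is routine ccc reflection and cardinal arithmetic.
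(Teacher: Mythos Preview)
Your proposal is correct and follows essentially the same approach as the paper: a length-$\w_1$ finite-support iteration of the Miller-type forcing, with the $\aleph_1$-sized splitting family assembled as the union of the countable families added at each stage, and any convergent sequence in the extension caught at some intermediate stage by ccc. The paper's own proof is in fact terser than yours, omitting any discussion of $\s$-centeredness and $\continuum$-preservation (treating these as routine), whereas you correctly flag and handle them.
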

\begin{proof}
Let $\PP$ be the length-$\w_1$, finite support iteration of the notion of forcing described in Lemma~\ref{lem:miller'sforcing}. Let $\U = \bigcup_{\a < \w_1}\U_\a$, where $\U_\a$ denotes the countable set of open subsets of $2^\w$ added at stage $\a$ of the iteration. Note that $|\U| = \aleph_1$. If $X \in [2^\w]^{con}$ in the final model, then because $X$ is countable, there is some $\a < \w_1$ such that $X$ appears at stage $\a$ of the iteration, and then $X$ is split by some $U \in \U_{\a+1}$. Thus in the final model, every $X \in [2^\w]^{con}$ is split by some $U \in \U$, and this shows that $\sr = \aleph_1$.
\end{proof}

\begin{lemma}\label{lem:ma}
Suppose $V \models \ma$ and that $\PP$ is a $\s$-centered notion of forcing in $V$. Then $\PP$ does not lower the value of $\non N$. 
\end{lemma}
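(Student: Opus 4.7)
The plan is to show that every subset of $2^\w$ in $V[G]$ of cardinality less than $\non{N}^V$ is Lebesgue null in $V[G]$; this suffices to conclude $\non{N}^{V[G]} \geq \non{N}^V$. Fix $\dot Y$ with $p \Vdash |\dot Y| = \lambda < \non{N}^V$ and $p \Vdash \dot Y \sub 2^\w$. Because $\PP$ is $\s$-centered and hence ccc, we may present $\dot Y$ as $\{\dot y_\xi : \xi < \lambda\}$ for $\PP$-names $\dot y_\xi$ of reals. The first main ingredient is the classical fact that $\s$-centered forcing adds no random real: for every $\PP$-name $\dot y$ of an element of $2^\w$, there is a null Borel set $B \in V$ with $\1 \Vdash \dot y \in B$. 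To establish this one uses the decomposition $\PP = \bigcup_n C_n$: because any two conditions in a common centered piece $C_n$ have a common extension and therefore force mutually consistent values for $\dot y$, one extracts from each $C_n$ some $V$-coded partial information about $\dot y$ and combines these countably many partial approximations into a null Borel cover.

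Applying this to each $\dot y_\xi$ yields null Borel sets $B_\xi \in V$ with $p \Vdash \dot y_\xi \in B_\xi$. At this point \ma enters. Martin's Axiom gives $\add{N}^V = \non{N}^V = \continuum^V$, so since $\lambda < \non{N}^V = \add{N}^V$, the union $\bigcup_{\xi < \lambda} B_\xi$ is null in $V$. In particular, it is contained in some null Borel $B \in V$. Because the null-measure of a Borel set is computed from its Borel code and is therefore absolute across forcing extensions, $B$ remains null in $V[G]$; moreover $p \Vdash \dot Y \sub B$, which shows $\dot Y$ is null in $V[G]$, as required.

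The main obstacle is the no-random-reals step. The most natural strategy, extracting from each $C_n$ a single ground-model branch as the unique real consistent with the conditions in $C_n$ forcing initial segments of $\dot y$, can fail: if $C_n$ decides only finitely many initial segments (as happens for Cohen forcing with its singleton centered decomposition), the extracted ``branch'' is not a real at all, and a countable family of such branches need not cover $\dot y[G]$. A working proof must instead carry out a more delicate measure-theoretic accounting, summing small-measure open approximations derived from the centered pieces in a way that keeps the total measure controlled, so that the resulting null $G_\dlt$ set in $V$ absorbs $\dot y[G]$ for every generic $G$.
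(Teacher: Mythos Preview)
Your proof is correct and follows essentially the same line as the paper's: use that $\sigma$-centered forcing adds no random reals to cover each element of the small set by a ground-model null Borel set, then invoke $\ma$ (via $\add N = \continuum$) to absorb these into a single ground-model null Borel set, which remains null in the extension by absoluteness. The paper simply cites Bartoszy\'nski--Judah for the no-random-reals fact rather than sketching it, and works in the extension (using ccc to pull the family of null covers back to $V$) where you work with names directly, but these are cosmetic differences.
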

\begin{proof}
This fact has been observed before (e.g., it forms part of the proof Corollary 39 in \cite{rearrangement}), but we do not have an exact reference and the proof is short, so we record it here.

Consider, in the extension, any infinite set $A$ of reals with cardinality less than $\continuum^V = \non N^V$; our goal is to prove that $A$ has measure zero. 

A $\s$-centered forcing never adds random reals \cite[Theorem 6.5.31]{B&J},
so no $a \in A$ is random over the ground model.  
Thus, for each $a \in A$ there is a measure-zero Borel set $N_a$ in the ground model such that the canonical extension $\tilde N_a$ with the same Borel code contains $a$.
Because $\PP$ has the countable chain condition, there is, in the ground model, a collection $\C$ of at most $|A|$ Borel sets, each of measure zero, such that all of the $N_a$'s are in $\C$. Because the ground model satisfies \ma and because $|\C| < \continuum$, the ground model has a measure-zero Borel set $N$ that includes all the sets from $\C$ and, in particular, all the $N_a$'s.  

For each $a \in A$, the fact that $N_a \sub N$ is preserved when we pass to the canonical extensions with the same Borel codes in the final model; that is, $\tilde N_a \sub \tilde N$ in $V^{\PP}$.  In particular, $A \subseteq \tilde N$.  But $\tilde N$ has measure zero, because it is the canonical extension of the null set $N$. Hence $A$ has measure zero.
\end{proof}

\begin{proof}[Proof of Theorem~\ref{thm:miller}]
Beginning with a model of $\ma+\neg\ch$, force with the partial order $\PP$ described in Lemma~\ref{lem:iteratemiller'sforcing}. By Lemmas \ref{lem:iteratemiller'sforcing} and \ref{lem:ma}, the resulting model will have $\sr = \aleph_1$ and $\non N = \continuum > \aleph_1$.  
\end{proof}

The cardinal characteristic $\mathfrak{sep}$ was defined by Kamburelis and W\k{e}glorz in \cite{K&W}. We do not record the definition of $\mathfrak{sep}$ here (see \cite{Brendle} instead), but will say simply that $\mathfrak{sep}$ is closely related to $\sr$, being a variant of $\split$ defined in terms of open subsets of $2^\w$, and that it is not difficult to show $\sr \leq \mathfrak{sep}$. We note that in general $\sr \neq \mathfrak{sep}$ because $\non M \leq \mathfrak{sep}$ by \cite[Theorem 1.1]{Brendle}, whereas $\sr < \non M$ in the random model by Theorem~\ref{thm:upperbound}.

In \cite[Section 2]{Brendle}, Brendle proved the consistency of $\dom = \non M < \mathfrak{sep}$. The model he uses is obtained by beginning with $\ma+\neg\ch$ and then performing a length-$\w_1$, finite support iteration of a $\s$-centered notion of forcing $\DD$, defined below, that adds a dominating real. Our proof of Theorem~\ref{thm:Hechler} below uses a similar model. However, proving $\sr = \continuum$ in this sort of model seems more difficult than proving $\mathfrak{sep} = \continuum$, and several new ideas are required. The proof is self-contained insofar as we do not assume the reader is familiar with \cite[Section 2]{Brendle}.

\begin{theorem}\label{thm:Hechler}
It is consistent with \zfc that $\dom = \cof M < \sr$.
\end{theorem}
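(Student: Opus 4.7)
The plan is to iterate, with finite support of length $\omega_1$, a $\sigma$-centered dominating-real forcing $\DD$ over a ground model $V$ satisfying $\ma + \neg\ch$ with $\continuum^V = \kp$. Let $V[G]$ denote the resulting extension; by $\sigma$-centeredness the iteration $\PP$ is ccc, and a count of its size gives $\continuum^{V[G]} = \kp$.

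First I would verify that $\dom = \cof{M} = \aleph_1$ in $V[G]$. The $\omega_1$-sequence of generic dominating reals produced at the successor stages of the iteration forms a dominating family of size $\aleph_1$, so $\dom = \aleph_1$. A standard analysis of this kind of finite-support $\sigma$-centered iteration, as in Brendle's paper, yields $\non{M} = \aleph_1$; combined with $\cof{M} = \max\{\dom, \non{M}\}$ this gives $\cof{M} = \aleph_1$.

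Second, and this is the main point, I would show $\sr = \kp$ in $V[G]$. Let $\U = \{U_\alpha : \alpha < \lambda\}$ be a family of open subsets of $2^\omega$ in $V[G]$ with $\lambda < \kp$; the goal is to exhibit $X \in [2^\omega]^{con}$ not split by any $U_\alpha$. The strategy is a preservation lemma of the following form: for every convergent $X \in V$ and every $\DD$-name $\dot U$ for an open subset of $2^\omega$, one can assign in $V$ a finite list of basic clopen sets $C^{\dot U, X}_0, \ldots, C^{\dot U, X}_k$ such that, whenever $\dot U$ is forced to split $X$, at least one $C^{\dot U, X}_i$ already splits $X$ in $V$. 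Iterating this through $\PP$ by the standard preservation framework for $\sigma$-centered ccc iterations reduces the problem to finding $X \in V$ unsplit by some $\lambda$-sized family of clopen sets from $V$, and such $X$ exists because $\ma$ in $V$ implies $\sr^V = \kp$.

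The principal obstacle, exactly the point the text flags, is the preservation lemma itself. Brendle's analogous lemma for $\mathfrak{sep}$ concerns separating two prescribed sets, which permits a rather local analysis; for $\sr$ the splitting condition involves a convergent sequence, and depends crucially on whether the limit point $p$ lies in $U$. The new idea required is a tail-approximation: $U$ splits $X = \langle x_n : n \in \omega\rangle$ precisely when, modulo a tail of $X$, exactly one of ``$p \in U$ and infinitely many $x_n$ lie outside $U$'' or ``$p \notin U$ and infinitely many $x_n$ lie in $U$'' holds. Combining this with the $\sigma$-centered, dominating-real structure of $\DD$, one can enumerate the possible witnesses to a split of $X$ by $\dot U$ using only finitely many ground-model basic clopens per centered piece of $\DD$, yielding the finite list $C^{\dot U, X}_i$ above. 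Lifting the preservation from $\DD$ to the iteration $\PP$ is then a matter of careful bookkeeping.
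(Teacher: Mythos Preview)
Your preservation lemma has a fatal flaw: no clopen subset of $2^\omega$ can split a convergent sequence. If $X$ converges to $p$ and $C$ is clopen, then $X\cap C$ is cofinite or finite according to whether $p\in C$ or not. So the conclusion ``at least one $C^{\dot U,X}_i$ already splits $X$ in $V$'' is impossible, which would force the hypothesis ``$\dot U$ is forced to split $X$'' to fail for every $\dot U$ --- clearly false. The same problem kills the final step, ``finding $X\in V$ unsplit by some $\lambda$-sized family of clopen sets'': every convergent $X$ is unsplit by every clopen set, so this gives no information. Even if you replace ``clopen'' by ``open'' throughout, you have not explained why a \emph{finite} list suffices per name, nor how the bookkeeping survives the iteration; this is exactly the point where adapting Brendle's $\mathfrak{sep}$ argument breaks down, and your sketch does not supply the missing idea.

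The paper takes a completely different route. Rather than a preservation lemma, it forces with $\cohen{\w_2}*\DD^{\w_1}$ and, given $\aleph_1$ many names for open sets, finds a single Cohen coordinate $h$ outside their supports. The unsplit sequence is then $X=\{h^{\dagger j}:j\in\w\}$, the bit-flips of $h$, which converges to $h$. The key technical lemma (Claim~1) is that although $s*p\mapsto s^{\dagger j}*p$ is not an automorphism of $2^{<\w}*\DD^\a$, every tidy condition $s*p$ has a \emph{symmetric extension} $s*q$ with $\val_{g^{\dagger j}}(q)\le\val_g(p)$ for all $j$ and all generic $g\supseteq s$; this is proved by induction on $\a$, simultaneously with a statement about symmetrically dominating names for natural numbers. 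Once this is in hand, any condition forcing $h\notin\dot V$ can be symmetrically extended to one forcing $h^{\dagger j}\notin\dot V$ for all large $j$, so $\dot V$ cannot split $X$. Your proposal misses this symmetry argument entirely.
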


\begin{proof}
We begin the proof by defining a variant of the Hechler forcing, which we call $\DD$; it is equivalent to the notion of forcing used by Brendle in \cite[Section 2]{Brendle}.
Conditions in $\DD$ are pairs of the form $\< s,\phi \>$, where
\begin{itemize}
\item $s \in \w^{<\w}$, and
\item $\phi$ is a function $\w^{<\w} \to \w$.
\end{itemize}
The ordering on $\DD$ is given by declaring $\< t,\psi \> \leq \< s,\phi \>$ if and only if 
\begin{itemize}
\item $t \supseteq s$, 
\item $\psi(r) \geq \phi(r)$ for all $r \in \w^{<\w}$, and
\item $t(j) \geq \phi(t \rest j)$ for all $j \in \mathrm{dom}(t) \setminus \mathrm{dom}(s)$.
\end{itemize}
If $p = \< s,\phi \> \in \DD$, we write $\stem p = s$.

Note that $\DD$ is $\s$-centered, because any conditions $\< s,\phi_1 \>, \dots, \< s,\phi_n \>$ in $\DD$ with the same stem have a common extension with that stem, namely $\< s,\max\{\phi_1,\dots,\phi_n\} \>$. Also note that forcing with $\DD$ adds a dominating real: if $G$ is $V$-generic for $\DD$, then $\bigcup \set{\stem p}{p \in G}$ is a function $\w \to \w$ that is $\geq^*$ every member of $(\w^\w)^V$.

Ultimately, the theorem will be proved by adding $\aleph_2$ Cohen reals to a model of \ch and then iterating $\DD$ in $\w_1$ steps, with finite support. (In fact, iterating $\DD$ over a model of $\ma+\neg\ch$ or even just $\ma_{ctbl}$, which is equivalent to the equality $\cov M = \continuum$, would suffice. This is the model Brendle uses in \cite[Section 2]{Brendle}, but the present proof is easier if we use the two-step forcing described above.) To begin the proof, though, let us just consider posets of the form $2^{<\w}*\DD^{\a}$, where
\begin{itemize}
\item $2^{<\w}$ denotes, as above, the tree of all finite $0$-$1$ sequences, partially ordered by extension, which is equivalent to the notion of forcing $\cohen{\w}$ for adding one Cohen real.
\item $\DD^{\a}$ denotes the length-$\a$, finite support iteration of $\DD$, where $\a \leq \w_1$. 
\end{itemize}
For each $\a \leq \w_1$, let $\PP_\a = 2^{<\w}*\DD^{\a}$. If $s \in 2^{<\w}$ and $p$ is a $2^{<\w}$-name such that $s \forces p \in \DD^\a$, then we write $s*p$ for the condition $\<s,p\> \in \PP_\a$. If $g \in 2^{\w}$ is $2^{<\w}$-generic over $V$ with $g \supseteq s$, then $\val_g(p)$ denotes the $\DD^\a$-name obtained from $p$ in $V[g]$.

As usual, a name $\dot m$ of a natural number is a \emph{nice} $\PP$-name (for some notion of forcing $\PP$) if it has the form $\dot m = \set{(a,k_a)}{a \in \A}$, where $\A$ is a maximal antichain in $\PP$ and each $k_a$ is a natural number.

For any $s \in 2^{<\w}$ and $j \in \w$, define $s^{\dagger j}$ so that $\mathrm{dom}(s^{\dagger j}) = \mathrm{dom}(s)$ and 
$$s^{\dagger j}(i) = 
\begin{cases}
s(i) & \text{ if } i \neq j, \\
1 - s(j) & \text{ if } i = j.
\end{cases}$$
Similarly, if $s \in 2^\w$ then $s^{\dagger j}$ denotes the function with domain $\w$ given by the above equation. Notice that if $g \in 2^\w$ is $2^{<\w}$-generic over $V$, then so is $g^{\dagger j}$ for all $j \in \w$.

Let us say that a condition $s*p \in \PP_\a$ is \emph{tidy} if 
\begin{itemize}
\item there is some finite $F \sub \a$ such that $s \forces \mathrm{supp}(p) = F$ (i.e., $s$ decides the support of $p$),
\item for every $\b \in F$, if $p(\b) = \< \dot t,\dot \phi \>$ then there is some $t \in \w^{<\w}$ such that $s*(p \rest \b) \forces \dot t = \check t$ (i.e., $s$ decides the stems of the $p(\b)$), and
\item $\1_{2^{<\w}} \forces p \in \DD^\a$ (which is stronger than the requirement $s \forces p \in \DD^\a$, which is necessary to have $s*p \in \PP_\a$ at all).
\end{itemize}
Notice that every condition in $\PP_\a$ has a tidy extension. (This is proved by induction on $\a$ as follows. The base case is trivial. 
If $\a$ is a limit ordinal, then given $s*p \in \PP_\a$ first extend to a condition $s'*p'$ such that $s'$ decides $\mathrm{supp}(p')$, then note that (because $\DD^\a$ uses finite supports) if $s'$ decides $\mathrm{supp}(p')$ then $s'*p' \in \PP_\b$ for some $\b < \a$ and therefore has a tidy extension by the induction hypothesis.
For the successor case, fix $s*p \in \PP_{\a+1}$. First extend $s*(p \rest \a)$ to some $s'*\hat p \in \PP_\a$ that decides $\stem{p(\a)}$, then (using the induction hypothesis) extend $s'*\hat p \in \PP_\a$ to a tidy condition $s''*\hat q$. Then $s''*(\hat q \cat \hat p(\a))$ is a tidy extension of $s*p$.)

For each $j \in \w$, the function $s \mapsto s^{\dagger j}$ is an automorphism of the forcing notion $2^{<\w}$. The function $s*p \mapsto s^{\dagger j}*p$ is not an automorphism of $\PP_\a$, even if we insist that $\1_{2^{<\w}} \forces p \in \DD^\a$ in order to guarantee that $s^{\dagger j}*p \in \PP_\a$. The following claim states that, nonetheless, this function is ``almost'' an automorphism of $\PP_\a$. This claim is the main tool used for proving $\sr = \aleph_1$ in our forcing extension below.

\begin{claim1}
Let $\a \leq \w_1$, and let $s*p \in \PP_\a$ be a tidy condition. Then there is some $s*q \leq s*p$ such that
$$\val_{g^{\dagger j}}(q) \leq \val_g(p)$$
for all $2^{<\w}$-generic $g$ extending $s$ and for all $j \in \w$.
\end{claim1}

When the conclusion of this claim holds, we say that $s*q$ \emph{symmetrically extends} $s*p$.

\begin{proof}[Proof of claim]
The proof is by induction on $\a$. We will prove an ostensibly stronger claim in order to have a sufficiently strong induction hypothesis:

\begin{claim*}
Let $\a \leq \w_1$, and let $s*p \in \PP_\a$ be tidy. Then there is some $s*q \leq s*p$ such that
for any nice $\PP_\a$-name $\dot m$ for a natural number, there is a nice $\PP_\a$-name $\dot M$ for a natural number such that for any $2^{<\w}$-generic $g$ extending $s$ and any $j \in \w$,
\begin{enumerate}
\item $\val_{g^{\dagger j}}(q) \leq \val_g(p)$, and
\item $s*p \forces\, \val_{g^{\dagger j}}(\dot M) \geq \val_g(\dot m)$. 
\end{enumerate}
\end{claim*}

When $(2)$ holds, we say that $\dot M$ \emph{symmetrically dominates} $\dot m$ at $s*q$. 

For the base case $\a = 0$, we have $\PP_0 = 2^{<\w}$. In this case $(1)$ is trivially true and we must show only the second claim.
Let $\dot m = \set{(a_\ell,k_\ell)}{\ell \in \w}$ be a nice $2^{<\w}$-name for a natural number. For now, we work only in the ground model and construct $\dot M$ as follows.

To begin, we claim that for each $r \in 2^{<\w}$ and $j \in \w$, there is some $t \in 2^{<\w}$ such that $t \supseteq r$ and $t^{\dagger j} \supseteq a_\ell$ for some $\ell \in \w$. This is because $\set{a_\ell^{\dagger j}}{\ell \in \w}$ is a maximal antichain in $2^{<\w}$; in particular, given $r \in 2^{<\w}$ there is always some $t \in 2^{<\w}$ such that $t \supseteq r$ and $t \supseteq a_\ell^{\dagger j}$ for some $\ell \in \w$. As $a_\ell^{\dagger j} \sub t$ if and only if $a_\ell \sub t^{\dagger j}$, this $t$ is as required.

Next, we claim that for any $r \in 2^{<\w}$, there is some $t \supseteq r$ and $M \in \w$ such that 
\begin{equation}\tag{$*$}
t^{\dagger j} \forces  \dot m \leq M \qquad \text{for all } j \in \w.
\end{equation}
To see this, fix $r \in 2^{<\w}$. Choose $t_0 \in 2^{<\w}$ such that $t_0 \supseteq r$ and $s_0 \supseteq a_{\ell_0}$ for some $\ell_0 \in \w$. (Note that some such $t_0$ and $\ell_0$ exist, because $\set{a_\ell}{\ell \in \w}$ is a maximal antichain in $2^{<\w}$.) Let $N = \mathrm{dom}(t_0)$. Using recursion, choose an ascending sequence $t_0 \sub t_1 \sub \dots \sub t_N$ of members of $2^{<\w}$ such that for each $j < N$ there is some $\ell_{j+1} \in \w$ with $t_{j+1} \supseteq a_{\ell_{j+1}}^{\dagger j}$. (Note that some such $t_{j+1}$ and $\ell_{j+1}$ always exist, by the previous paragraph.) Let $t = t_N$ and let
$$M = \max \set{k_{\ell_j}}{j \leq N}.$$
To see that $t$ and $M$ are as required, let $j \in \w$. If $j < N$, then
$$t^{\dagger j} \supseteq t_{j+1}^{\dagger j} \supseteq a_{\ell_{j+1}} \forces \dot m = k_{\ell_{j+1}} \leq M.$$
If on the other hand $j \geq N$, then
$$t^{\dagger j} \supseteq t_0^{\dagger j} = t_0 \supseteq a_{\ell_0} \forces \dot m = k_{\ell_0} \leq M.$$
(The equality $t_0^{\dagger j} = t_0$ follows from the fact that $j \geq N \geq \mathrm{dom}(t_0)$.) Either way, $(*)$ holds.

By the previous paragraph, there is a maximal antichain $\set{t_i}{i \in \w}$ of members of $2^{<\w}$, along with integers $M_i$, $i \in \w$, such that each pair $t_i, M_i$ satisfies $(*)$. Let
$$\dot M = \set{(t_i,M_i)}{i \in \w}.$$

It is clear that $\dot M$ is a nice name. To see that $\dot M$ symmetrically dominates $\dot m$ at $s$, suppose $g \supseteq s$ is a $2^{<\w}$-generic real and fix $j \in \w$. Because $\set{t_i}{i \in \w}$ is a maximal antichain in the ground model, there is some $i \in \w$ such that $t_i \sub g^{\dagger j}$ or, equivalently, $t_i^{\dagger j} \sub g$. But $t_i^{\dagger j} \forces \dot m \leq M_i$ and $t_i \forces \dot M = M_i$. This implies $\val_{g^{\dagger j}}(\dot M) = M_i \geq \val_g(\dot m)$.
As $j \in \w$ was arbitrary, $(2)$ holds. (Notice that we have actually proved something a little stronger than $(2)$: that $\dot M$ symmetrically dominates $\dot m$ at $\0$.)

Now fix $\a < \w_1$ and suppose the (revised) claim is true for all $\b < \a$; let us prove it is also true for $\a$. 

For $(1)$, if $\a$ is a limit ordinal then there is nothing to prove: because $\DD^\a$ is a finite support iteration, $s*p \in \DD^\a$ implies $s*p \in \DD^\b$ already for some $\b < \a$, and if $s*q \in \PP_\b$ is such that $\val_{g^{\dagger j}}(q) \leq_{\DD^\b} \val_g(p)$ (which must hold for some $q$ by the inductive hypothesis) then $\val_{g^{\dagger j}}(q) \leq_{\DD^\a} \val_g(p)$ also. 

Suppose $\a = \b+1$ is a successor ordinal, and let us prove that $(1)$ holds for $\a$. Fix $s*p \in \PP_\a$. 
Then $s*(p \rest \b) \forces p(\b) \in \DD$, and in particular, we may find some $\s \in \w^{<\w}$ and a $\PP_\b$-name $\dot \phi$ such that 
$s*(p \rest \b) \forces\ p(\b) = \< \s, \dot \phi \>$. 
By the inductive hypothesis, there is some $\hat q$ such that $s \forces \hat q \in \DD^\b$, $s*\hat q \leq s*(p \rest \b)$, and both $(1)$ and $(2)$ hold for $\hat q$ and $p$.

For each $t \in \w^{<\w}$, let $\dot m_t$ be a nice $\PP_\b$-name for a natural number such that $s*\hat q \forces \dot \phi(t) = \dot m_t$. Using the inductive hypothesis $(2)$: for each $t \in \w^{<\w}$, let $\dot M_t$ be a nice $\PP_\b$-name for a natural number symmetrically dominating $\dot m_t$ at $s*(p \rest \b)$. Let $\dot \psi$ be a $\PP_\b$-name for the function $t \mapsto \dot M_t$. Then define $q \in \PP_{\b+1}$ by setting $q \rest \b = \hat q$ and $q(\b) = \< \check s, \dot \psi \>$. It is clear that this really does define a condition in $\PP_\a$ (because $s*(q \rest \b) \in \PP_\b$ and $s*(q \rest \b) \forces q(\b) \in \DD$). Furthermore, $s*q$ symmetrically extends $s*p$, because if $j \in \w$ then $\val_{g^{\dagger j}}(q \rest \b) = \val_{g^{\dagger j}}(\hat q) \leq_{\PP_\b} \val_g(p \rest \b)$ by our choice of $\hat q$, and $s*(q \rest \b) = s*\hat q \forces q(\b) = \< \check s, \dot \psi \> \leq \< \check s, \dot \phi \> = p(\b)$.

Let $\a \leq \w_1$ and suppose that $(1)$ and $(2)$ hold for every $\b < \a$. By the argument above, this implies that $(1)$ holds for $\a$. To finish our inductive argument, we must show that $(2)$ holds for $\a$ as well.
Let $s*p \in \PP_\a$ and let $\dot m = \set{(a^0_\ell * a^1_\ell,k_\ell)}{\ell \in \w}$ be a nice $\PP_\a$-name for a natural number. (We may index our antichain by $\w$ because $\PP_\a$ has the ccc.)

For each $\ell \in \w$, if $s*p \leq a^0_\ell * a^1_\ell$ then (because $(1)$ holds for $\a$) there is some $s*q \leq s*p$ such that $s*q$ symmetrically extends $s*p$. It follows that, for each $\ell \in \w$, there is a maximal antichain $\A_\ell$ of conditions below $a^0_\ell * a^1_\ell$, each of which symmetrically extends $a^0_\ell * a^1_\ell$. Define
$\dot m' = \set{(a,k_\ell)}{a \in \A_\ell}$.
It is clear that $\1_{\PP_\a} \forces \dot m = \dot m'$. But $\dot m'$ has the property that if $(b^0*b^1,k_\ell) \in \dot m'$ (where $b^0*b_1 \in \A_\ell$), then 
for any $2^{<\w}$-generic $g$ extending $b^0$ and any $j \in \w$,
$\val_{g^{\dagger j}}(b^1) \leq \val_g(a^1_\ell)$.
In particular, if $\val_g(a) \forces \dot m = k$, then this means that there is some $s \in 2^{<\w}, s \sub g$ such that $s*a \forces \dot m = k$; this implies that for any $\PP_\a$-generic $g*G$, if $s*a \in g*G$ then $b^0*b^1 \in g*G$ for some $b^0*b^1 \in \A_\ell$, where $\ell$ is such that $k_\ell = k$; but then by the previous sentence, 
$\val_{g^{\dagger j}}(b^1) \leq \val_g(a^1_\ell) \forces \dot m = \dot m' = k_\ell$
for any $j \in \w$.
Thus $\dot m'$ has the property that 
$$\text{if } \val_g(a) \forces \dot m' = k \text{, then } \val_{g^{\dagger j}}(a) \forces \dot m' = k \text{ for all } j \in \w.$$ 
By replacing $\dot m$ with $\dot m'$ if necessary, we assume in what follows that $\dot m$ has this property.

Now let us work only in the ground model and construct $\dot M$ as follows. 

To begin, we claim that for each $r*r' \in 2^{<\w}*\DD^\a$ and $j \in \w$, there is some $t*t' \in \PP^\a$ and some $\ell \in \w$ such that $t*t' \leq r*r'$ and $t^{\dagger j}*t' \leq a^0_\ell*a^1_\ell$ for all $j \in \w$. 
To see this, note that $\set{a^0_\ell*a^1_\ell}{\ell \in \w}$ is a maximal antichain, so given $r*r'$ we may extend to some $q*q' \leq r*r'$ with $q*q' \leq a^0_\ell*a^1_\ell$ for some $\ell \in \w$; then because $(1)$ holds at $\a$, we may extend further to some $t*t' \leq q*q'$ such that if $g$ is any $2^{<\w}$-generic real with $g \supseteq t \supseteq q$ then $\val_{g^{\dagger j}}(t') \leq \val_g(q') \leq \val_g(a^1_\ell)$, for all $j \in \w$. But this means that
$t^{\dagger j}*t' \leq a^0_\ell*a^1_\ell$ for all $j \in \w$, so that $t*t'$ is as claimed.


Setting $M = k_\ell$ (where $\ell$ is as in the previous paragraph), it follows that for any $r*r' \in \PP_\a$, there is some $t*t' \supseteq r*r'$ and some $M \in \w$ such that 
\begin{equation}\tag{$*$}
t^{\dagger j}*t' \forces  \dot m \leq M \qquad \text{for all } j \in \w.
\end{equation}
Thus there is a maximal antichain $\set{t_i*t_i'}{i \in \w}$ of members of $\PP_\a$, along with integers $M_i$, $i \in \w$, such that each pair $t_i*t_i', M_i$ satisfies $(*)$. Let
$$\dot M = \set{(t_i*t_i',M_i)}{i \in \w}.$$

It is clear that $\dot M$ is a nice name. To see that $\dot M$ symmetrically dominates $\dot m$ at $s*p$, suppose $g \supseteq s$ is a $2^{<\w}$-generic real. Because $\set{t_i}{i \in \w}$ is a maximal antichain for $2^{<\w}$ (otherwise, $\set{t_i*t_i'}{i \in \w}$ could not be a maximal antichain for $\PP_\a$), there is some $i \in \w$ such that $t_i \sub g$ or, equivalently, $t_i^{\dagger j} \sub g^{\dagger j}$ for all $j \in \w$. But  for all $j \in \w$, $t_i^{\dagger j}*t' \forces \dot m \leq M_i$ and $t_i*t' \forces \dot M = M_i$. 
This implies $\val_g(\dot M) \geq \val_{g^{\dagger j}}(\dot m)$ for every $j \in \w$.
\end{proof}


Let $\PP = \cohen{\w_2}*\DD^{\w_1}$. To prove the theorem, we will show that if $\continuum \leq \aleph_2$ then $1_\PP \forces \aleph_1 = \dom = \cof M < \sr = \continuum = \aleph_2$.

\begin{claim2}
Suppose $G$ is $\PP$-generic over $V$. Then $V[G] \models \dom = \cof M = \aleph_1$.
\end{claim2}
\begin{proof}[Proof of claim]
It suffices to show that forcing with $\DD^{\w_1}$ produces a model in which $\dom = \cof M = \aleph_1$ (because forcing with $\PP = \cohen{\w_2}*\DD^{\w_1}$ can be viewed as first forcing with $\cohen{\w_2}$ and afterward forcing with $\DD^{\w_1}$).

Forcing with $\DD^{\w_1}$ makes $\dom = \aleph_1$ because dominating reals are added at every stage of the iteration. Forcing with $\DD^{\w_1}$ makes $\non M = \aleph_1$ because our use of finite supports guarantees that Cohen reals are added at every limit stage of cofinality $\w$. (In fact, Cohen reals are added at every stage of the iteration, because forcing with $\DD$ adds a Cohen real. Specifically, if $d: \w \to \w$ is the dominating real added by $\DD$, then the function $c: \w \to 2$ defined by setting $c(n) = 0$ if and only if $c(n)$ is even is $2^{<\w}$-generic.) Thus $\dom = \non M = \aleph_1$ after forcing with $\DD^{\w_1}$. By a result of Fremlin (see \cite[Theorem 2.2.11]{B&J}), $\cof M = \max\{ \dom,\non M \}$, and so it follows that $\cof M = \aleph_1$ after forcing with $\DD^{\w_1}$.
\end{proof}

\begin{claim3}
Suppose that $V \models \continuum \leq \aleph_2$ and that $G$ is $\PP$-generic over $V$. Then $V[G] \models \sr = \aleph_2$.
\end{claim3}
\begin{proof}[Proof of claim]
Let $\U$ be a collection of open subsets of $2^\w$ in $V[G]$, with $|\U| \leq \aleph_1$. To prove the claim, it suffices to show that there is some $X \in [2^\w]^\infty$ not split by any $U \in \U$.

For each $U \in \U$, let $\dot U$ be a nice $\PP$-name for $U$. Using the fact that $\PP$ has the countable chain condition, there is some collection $\mathcal N$ of nice $\PP$-names with $\mathcal N \in V$ such that $\set{\dot U}{U \in \U} \sub \mathcal N$ and $|\mathcal N| = \aleph_0 \cdot |\U| \leq \aleph_1$.

Each nice $\PP$-name for an open subset of $2^\w$ has countable support; i.e., it is really a $\cohen{S}*\DD^{\w_1}$-name for some countable $S \sub \w_2$. It follows that there is some $\lambda < \w_2$ such that every $\dot V \in \mathcal N$ is really a $\cohen{\lambda}*\DD^{\w_1}$-name. In particular, there is a countable $A \sub \w_2$ such that every $\dot V \in \mathcal N$ is a nice $\cohen{\w_2 \setminus A}*\DD^{\w_1}$-name.

Observe that we may factor $\PP$ by breaking the Cohen part into two pieces along $A$: $\PP = \cohen{\w_2 \setminus A}*\cohen{A}*\DD^{\w_1}$. 

Let $G$ be $\PP$-generic over $V$. Let $G_0$ be the restriction of $G$ to $\cohen{\w_2 \setminus A}$, and let $V_1 = V[G_0]$. Note that $V[G]$ can be viewed as a forcing extension of $V_1$, using the forcing notion $2^{<\w}*\DD^{\w_1}$. Fix $h$ and $H$ such that $h$ is $2^{<\w}$-generic over $V_1$, $H$ is $\DD^{\w_1}$-generic over $V_1[h]$, and $V[G] = V_1[h][H]$. 

We claim that in the final model $V[G]$, no $U \in \U$ splits the set $X = \set{h^{\dagger j}}{j \in \w} \in [2^\w]^{con}$.
To prove this, we will work in $V_1$ and show that every $\dot V \in \mathcal N$ is forced (in $V_1$) not to split $X$. 

Let $\dot g$ denote the standard $V_1$-name for the $2^{<\w}$-generic real, so that every $s \in 2^{<\w}$ forces $s \sub \dot g$. Let $\dot X$ be the natural $V_1$-name for the set $X$, so that $\1_{2^{<\w}} \forces \dot X = \set{(\dot g)^{\dagger j}}{j \in \w}$ and every $s \in 2^{<\w}$ forces $s^{\dagger j} \sub \dot g^{\dagger j}$.

Fix $\dot V \in \mathcal N$ and suppose that $s*p \forces \dot g \notin \dot V$. (Note that if $s*p \forces \dot g \notin \dot V$, then $s*p \forces$ ``$\dot V$ does not split $\dot X$'' because $\dot V$ names an open set, and $\dot X$ names a convergent sequence.)

By Claim 1, there is some $s*q \in 2^{<\w}*\DD^{\w_1}$ that symmetrically extends $s*p$.
Suppose $g$ is any $2^{<\w}$-generic filter containing $s$ (possibly, but not necessarily, equal to $h$). It follows that $g^{\dagger j}$ is also a $2^{<\w}$-generic filter extending $s^{\dagger j}$ for every $j \in \w$. If $j \geq |s|$, then $g$ and $g^{\dagger j}$ are both generic filters extending $s=s^{\dagger j}$.

Because $s*q \leq s*p$, we have $s*q \forces \dot g \notin \dot V$. As $g \supseteq s$ and $g$ is a $2^{<\w}$-generic filter extending $s$, this implies $\val_g(q) \forces g = \val_g(\dot g) \notin \val_g(\dot V)$ in $V_1[g]$. If $j \geq |s|$, then $g^{\dagger j}$ is also a $2^{<\w}$-generic filter extending $s$, and so as before,
$$\val_{g^{\dagger j}}(q) \forces g^{\dagger j} = \val_{g^{\dagger j}}(\dot g) \notin \val_{g^{\dagger j}}(\dot V)$$
in $V_1[g^{\dagger j}]$. 

Now, $V_1[g^{\dagger j}] = V_1[g]$ (because $g$ and $g^{\dagger j}$ are definable from one another) and $\val_{g^{\dagger j}}(\dot V) = \val_g(\dot V)$ (because the support of $\dot V$ is disjoint from $\cohen{A}$). 
%
%
Making these substitutions, we see that $\val_{g^{\dagger j}}(q) \forces g^{\dagger j} \notin \val_g(\dot V)$ in $V_1[g]$. As this holds for any $2^{<\w}$-generic $g$ extending $s$, we conclude that $s*q \forces \dot g^{\dagger j} \notin \dot V$.
As this is true for any $j \geq |s|$, we have
$$s*q \forces \dot g^{\dagger j} \notin \dot V \quad \text{ for all }j > |s|.$$
Thus $s*q \forces$ ``$\dot V$ does not split $\dot X$." 

Thus we see that if $s*p \forces$ ``$\dot g \notin \dot V$'' then some extension $s*q \leq s*p$ forces that $\dot V$ does not split $\dot X$. Thus no $s*p \in 2^{<\w}*\DD^{\w_1}$ can force both ``$\dot g \notin \dot V$'' and ``$\dot V \text{ splits } \dot X$''. On the other hand, if $s*p$ were to force ``$\dot V$ splits $\dot X$'', then it would have to force also that $\dot g \notin \dot V$, because $\dot V$ names an open set, and $\1_{2^{<\w}} \forces \dot X$ converges to $\dot g$. Thus there is no $s*p \in 2^{<\w}*\DD^{\w_1}$ forcing that $\dot V$ splits $\dot X$. 
Consequently, $\val_{G}(\dot V)$ does not split $X$ in $V_1[h][H] = V[G]$. 

This is true for any $\dot V \in \mathcal N$, so it follows that in $V[G]$, no member of $\U$ splits $X$. Because $\U$ was an arbitrary $\aleph_1$-sized collection of open subsets of $2^\w$, $V[G] \models \sr > \aleph_1$. A standard argument shows that $V[G] \models \continuum = \aleph_2$, so $V[G] \models \sr = \aleph_2$ as claimed.
\end{proof}

The theorem follows from the previous two claims: beginning with a model of $\continuum \leq \aleph_2$, forcing with $\PP$ produces a model in which $\dom = \cof M < \sr$.
\end{proof}

\section{Open questions, and the status of Efimov's problem}\label{sec:questions}

As mentioned in the introduction (and proved again in this paper), it is consistent that Efimov spaces exist. Therefore Efimov's problem now takes the form: \emph{Is it consistent that there are no Efimov spaces?}

Our work here does not ``move the needle'' on this problem, in the sense that this paper does not show Efimov spaces to exist in any models of \zfc where they were not proved to exist already. The reason is that the second author proved in \cite{Dow} that if $\split = \mathrm{cof}(\split^{\aleph_0},\sub)$ and $2^\split < 2^\continuum$, then there is an Efimov space. Recently, however, he has realized that the proof in \cite{Dow} does not require the full strength of this hypothesis: it is enough that $\split \leq \kp < \continuum$ for some cardinal $\kp$ such that $\kp = \mathrm{cf}(\kp^{\aleph_0},\sub)$.
In other words, \cite{Dow} shows that one may weaken the hypothesis of Theorem~\ref{thm:noseq} by replacing ``$\sr \leq \kp$'' with ``$\split \leq \kp$,'' but at the cost of weakening the conclusion by replacing ``$\noseq \leq \kp$'' with ``there is an Efimov space." 

The Efimov space constructed in \cite{Dow} has weight $\continuum$, so the construction there gives us no information about $\noseq$. Thus if one is only interested in the existence of Efimov spaces, (s)he can safely ignore our paper. If one is also interested in what properties they can have, such as having small weight, then Theorem~\ref{thm:noseq} has something to say. 

While the Efimov spaces in \cite{Dow} have weight $\continuum$, they are ``small'' in another interesting way: they have character $\split$, which means that each point has a local basis of size $\leq\!\split$. While the weight of an Efimov space is always bounded below by $\split$, we know of no such bound on the character of an Efimov space.

\begin{question}
Is it consistent that an Efimov space has character $<\!\split$?
\end{question}

With Shelah, the second author also proved that $\bdd = \continuum$ implies the existence of an Efimov space \cite{Dow&Shelah}. Thus the status of Efimov's problem can be summed up as follows: if there is a model of \zfc without Efimov spaces, then in that model either $\bdd < \split = \continuum$, or else $\bdd < \continuum \geq \split \geq \aleph_\w$ and there are inner models containing large cardinals.

It would be interesting to determine whether the assumption on cofinalities in Theorem~\ref{thm:noseq} can be dropped:
\begin{question}
Is it consistent (relative to large cardinals) that $\sr < \noseq$?
\end{question}

\noindent Another question relevant to the hypothesis of Theorem~\ref{thm:noseq} is also an interesting combinatorial question in its own right:

\begin{question}
Is it consistent that $\mathrm{cof}(\sr) = \w$?
\end{question}

\noindent We have proved three lower bounds for $\sr$, namely $\split$, \non M, and $\bdd$. None of these three lower bounds can have countable cofinality. However, a different argument is required to show this for each of these three cardinals, and none of these three arguments readily generalizes to say anything about the cofinality of $\sr$. Note that it is easy to make $\sr$ singular: adding $\aleph_{\w_1}$ Cohen reals to a model of \ch, we obtain a model with $\cov M = \aleph_{\w_1} = \continuum$, which implies $\sr = \aleph_{\w_1}$.

While we have determined completely the place of $\sr$ among the cardinals in Cicho\'n's diagram, the same cannot be said for $\noseq$. A great deal toward this goal can be deduced from the theorems above (although we leave such deductions to the reader), but a few questions remain. In our view, the most interesting of these is:

\begin{question}\label{q:notLaver}
Is it consistent that $\non N < \noseq$?
\end{question}

\noindent If one is to avoid blowing the continuum up past $\aleph_\w$, then $\noseq \leq \sr$, in which case $\non N < \noseq$ implies $\non N < \sr \leq \max\{\bdd,\non N\} = \bdd$. Thus a positive answer to Question~\ref{q:notLaver} (if indeed the answer is positive) will most likely come from a model with $\non N < \bdd$.

The Laver model has $\non N < \bdd$, but will not work because, as mentioned in Section~\ref{sec:lower}, the second author has shown that $\noseq = \aleph_1$ in the Laver model. Another natural candidate is the model obtained by adding $\aleph_1$ random reals to a model of $\ma+\neg\ch$. 

\begin{question}
What is the value of $\noseq$ in a model obtained by adding $\aleph_1$ random reals to a model of $\ma+\neg \ch$?
\end{question}

\noindent By ``adding $\aleph_1$ random reals'' we mean forcing with the product measure algebra on $2^{\aleph_1}$. If instead one were to add the random reals in a length-$\w_1$ finite support iteration, then this forces 
$\noseq \leq \sr \leq \max\{\bdd,\non N\} = \aleph_1$ (with any ground model). In other words, adding the random reals iteratively adds a small Efimov space, even where none was present before. But it is not clear whether adding the random reals simultaneously will do so. Adding $\aleph_1$ random reals to a model of $\bdd = \aleph_1$ (either simultaneously or by iteration) will once again add small Efimov spaces, because it forces $\max\{\bdd,\non N\} = \aleph_1$. But it is not clear whether this happens when $\bdd > \aleph_1$ in the ground model.



\begin{thebibliography}{99}

\bibitem{B&J} T. Bartoszy\'nski and H. Judah, \emph{Set Theory: On theStructure of the Real Line}, A K Peters (1995).
\bibitem{Blass} A. Blass, ``Combinatorial cardinal characteristics of the continuum,'' in \emph{Handbook of Set Theory,} M. Foreman and A. Kanamori eds., Springer-Verlag (2010), pp. 395--489. 
\bibitem{rearrangement} A. Blass, J. Brendle, W. Brian, M. Hardy, J. D. Hamkins, and P. Larson, ``The rearrangement number,'' to appear in \emph{Transactions of the American Mathematical Society}. 
\bibitem{Blass&Shelah} A. Blass and S. Shelah, ``Ultrafilters with small generating sets,'' \emph{Israel Journal of Mathematics} \textbf{65} (1984), pp. 259--271.
\bibitem{Booth} D. Booth, ``A Boolean view of sequential compactness,'' \emph{Fundamenta Mathematicae} \textbf{85} (1974), pp. 99--102.
\bibitem{Brendle} J. Brendle, ``Around splitting and reaping," \emph{Comment. Math. Univ. Carolin.} \textbf{39} (1998), pp. 269--279. 
\bibitem{DoddJensen} A. J. Dodd and R. B. Jensen, ``The covering lemma for $\mathrm L[U]$,'' \emph{Annals of Mathematical Logic} \textbf{22} (1982), pp. 127--135.
\bibitem{vanDouwen} E. K. van Douwen, ``The integers and topology,'' in \emph{Handbook of Set-Theoretic Topology}, K. Kunen and J. E. Vaughan, eds., North-Holland, Amsterdam (1984), pp. 111--167.
\bibitem{vanDouwen&Fleissner} E. K. van Douwen and W. G. Fleissner, ``Definable forcing axiom: an alternative to Martin's axiom," \emph{Topology and its Applications} \textbf{35} (1990), pp. 277--289.
\bibitem{Dow} A. Dow, ``Efimov spaces and the splitting number,'' \emph{Topology Proceedings} \textbf{29} (2005), pp. 105--113.
\bibitem{Dow&Fremlin} A. Dow and D. Fremlin, ``Compact sets without converging sequences in the random real model,'' \emph{Acta Mathematica Universitatis Comenianae, New Series} \textbf{76} (2007), pp. 161--171.
\bibitem{Dow&Shelah} A. Dow and S. Shelah, ``An Efimov space from Martin's Axiom,'' \emph{Houston Journal of Mathematics} \textbf{39} (2013), pp. 1423--1435.
\bibitem{efimov} B. Efimov, ``The imbedding of the Stone-\v{C}ech compactifications of discrete spaces into bicompacta,'' \emph{Doklady Akademiia Nauk USSR} \textbf{189} (1969), pp. 244--246.
\bibitem{fedorcuk1} V. V. Fedor\v{c}uk, ``A bicompactum whose infinite closed subsets are all $n$-dimensional,'' \emph{Matematicheski\u{\i} Sbornik Novaya Seriya} \textbf{96}, no. 138 (1975), pp. 41--62.
\bibitem{fedorcuk2} V. V. Fedor\v{c}uk, ``Completely closed mappings, and the consistency of certain general topology theorems with the axioms of set theory,'' \emph{Matematicheski\u{\i} Sbornik Novaya Seriya} \textbf{99}, no. 141 (1976), pp. 1--26.
\bibitem{fedorcuk3} V. V. Fedor\v{c}uk, ``A compact space having the cardinality of the continuum with no convergent sequences,'' \emph{Mathematical Proceedings of the Cambridge Philosophical Society} \textbf{81} (1977), pp. 177--181.
\bibitem{Geschke} S. Geschke, ``The coinitiality of a compact space,'' \emph{Topology Proceedings} \textbf{30} (2006), pp. 237--250.
\bibitem{Gitik} M. Gitik, ``The negation of the singular cardinal hypothesis from $o(\kp) = \kp^{++}$,'' \emph{Annals of Pure and Applied Logic} \textbf{43} (1989), pp. 209--234.
\bibitem{hart} K. P. Hart, ``Efimov's problem," \emph{Open Problems in Topology II} (ed. Eliott Pearl), Elsevier Science, 2007, pp. 171--177.
\bibitem{Hausdorff} F. Hausdorff, ``Summen von $\aleph_1$ Mengen," \emph{Fundamenta Mathematicae} \textbf{26} (1936), pp. 241--255.
\bibitem{JMPS} W. Just, A. R. D. Mathias, K. Prikry, and P. Simon, ``On the existence of large $p$-ideals,'' \emph{Journal of Symbolic Logic} \textbf{55} (1990), pp. 457--465.
\bibitem{K&W} A. Kamburelis and B. W\k{e}glorz, ``Splittings,'' \emph{Archive for Mathematical Logic} \textbf{135} (1996), pp. 263--277.
\bibitem{Kechris} A. S. Kechris, \textit{Classical Descriptive Set Theory}, Graduate Texts in Mathematics vol. 156, Springer-Verlag, 1995.
\bibitem{Koppelberg} S. Koppelberg, ``Boolean algebras as unions of chains of subalgebras,'' \emph{Algebra Universalis} \textbf{7} (1977), pp. 195--203.
\bibitem{Miller} A. W. Miller, ``Covering $2^\w$ with $\w_1$ disjoint closed sets,'' in \emph{The Kleene Symposium}, eds. J. Barwise, H. J. Keisler, and K. Kunen, North-Holland (1980), pp. 415--421.
\bibitem{Oxtoby} J. C. Oxtoby, \emph{Measure and Category, second edition}, Springer-Verlag (1980).
\bibitem{Sobota} D. Sobota, ``The dominating number $\dom$ and convergent sequences," MathOverflow question (2015) \texttt{http://mathoverflow.net/q/206345}.
\bibitem{Sobota2} D. Sobota, ``The Nikodym property and cardinal characteristics of the continuum," \emph{Annals of Pure and Applied Logic} \textbf{170} (2019) pp. 1--35.


\end{thebibliography}
\end{document}